 \numberwithin{equation}{section}
\definecolor{darkcyan}{rgb}{0.0, 0.55, 0.55}
\definecolor{darkviolet}{rgb}{0.58,0,0.83} %{148,0,211}
\newcommand{\Hil}[0]{
\mathcal{H}
}
\newcommand{\norm}[2]{
\left\| #2 \right\|_{#1}
}
\newcommand{\HS}[0]{
{\mathcal HS}
}
\newcommand{\RR}[0]{
\mathbb{R}
}
\newcommand{\BL}[1]{
{\mathcal B} \left( #1 \right)
}
\newtheorem{theorem}{Theorem}[section]
\newtheorem{definition}{Definition}[section]
\newtheorem{proposition}[theorem]{Proposition}
\newtheorem{lemma}[theorem]{Lemma}
\newtheorem{corollary}[theorem]{Corollary}
\newtheorem{conj.}[theorem]{Conjecture}
\newtheorem{Bsp.}{Example}[section]
\newcommand{\identity}[1]{\mathsf{id}_{ #1 }}
\newenvironment{proof}{\noindent \bf Proof: \rm}{$ \hspace{\stretch{1}} \Box $

\vspace{5mm}}
\newcommand{\ltfun}[2]{\sum \limits_{#1} \bigoplus #2}
\def\ltiv{\ltfun{i\in I}{V_i}}
\def\ltiw{\ltfun{i\in I}{W_i}}
\def\w{\widetilde{W}}
\newcommand{\diag}[1]{\mathsf{diag}\left( #1 \right)} %sf rm tt
\newcommand{\range}[1]{\mathsf{R}\left( #1 \right)} %sf rm tt
\newcommand{\kernel}[1]{\mathsf{N}\left( #1 \right)} %sf rm tt
\begin{document}
%%%%%%%%%%%%%%%%%%%%%%%%%%%%%%%%%%%%%%%%%%%%%%%%%%
%
% The Title
%
\title{\bf\vspace{-39pt}  Representation of Operators Using Fusion Frames }
%%%%%%%%%%%%%%%%%%%%%%%%%%%%%%%%%%%%%%%%%%%%%%%%%%
%
% The Authors
%
\author{Peter Balazs, Mitra Shamsabadi, Ali Akbar Arefijamaal, Chilles Gardon}

%\author{Peter Balazs\\ Austrian Academy of Sciences, Acoustics Research Institute, \\
%\small  Vienna, Austria \\ \small Peter.Balazs@oeaw.ac.at \\}
%
%\author{Chilles Gardon\\  Laboratoire des Signaux et Syst\'emes, CentraleSup\'elec, \\
%\small ....  }
%
%\author{Mitra Shamsabadi\\ Department of Mathematics and Computer Sciences, Hakim Sabzevari University, \\ \small Sabzevar, Iran.\\ \small mi.abadi@hsu.ac.ir}
%
%\author{Ali Akbar Arefijamaal\\ Department of Mathematics and Computer Sciences, Hakim Sabzevari University, \\ \small Sabzevar, Iran.\\ \small arefijamaal@hsu.ac.ir;arefijamaal@gmail.com}

%%%%%%%%%%%%%%%%%%%%%%%%%%%%%%%%%%%%%%%%%%%%%%%%%%
%
% Do not print the date
%
\date{}

%%%%%%%%%%%%%%%%%%%%%%%%%%%%%%%%%%%%%%%%%%%%%%%%%%
%
% Make the title and set the header styles to be
%   fancy for this page.  STSIP will adjust these
%   headings later.
%
\maketitle
\thispagestyle{fancy}

%%%%%%%%%%%%%%%%%%%%%%%%%%%%%%%%%%%%%%%%%%%%%%%%%%
%
% The Abstract, keywords, and phrases
%

\begin{abstract}
For finding the numerical solution of operator equations in many applications a decomposition in subspaces is needed. Therefore, it is necessary to extend the known method of matrix representation to the utilization of fusion frames.
In this paper we investigate this representation of operators on a Hilbert space $\Hil$ with Bessel fusion sequences, fusion frame and Riesz decompositions.
We will give the basic definitions. We will show some structural results and give some examples.
Furthermore, in the case of Riesz decompositions, we prove that those functions are isomorphisms. Also, we want to find the pseudo-inverse and the inverse (if there exists) of such matrix representations. We are going to apply this idea to the Schatten $p$-class operators.  Finally, we show that tensors of fusion frame are frames in the space of Hilbert-Schmidt operators.
\vspace{5mm} \\
\noindent {\it Key words and phrases} : Frames,  Matrix representation, Fusion frames, Fusion Riesz bases, Pseudo-inverse, Schatten $p$-class operators.
\vspace{3mm}\\
\noindent {\it 2010 AMS Mathematics Subject Classification} ---
(primary:) 41A58, 47A58 (secondary:) 65J10
\end{abstract}
\section{Introduction}
Frames were introduced by Duffin and Schaefer \cite{duffschaef1} and became a popular topic of mathematical research with the rise of wavelets \cite{daubgromay86}.
Frames have been the focus of active research, both in theory \cite{cach97,ehhan08,gr91} and applications \cite{Cai12,GrLe04,framepsycho16,boelc1,colgaz10}.
Frame theory has given rise to deep conjectures and results such as the Feichtinger conjecture \cite{gr03-4,maspsr15}.
 Also several generalizations have been investigated, e.g. \cite{antoin2,jpaxxl09,spexxl14,Sun2006437}, among them fusion frames \cite{caskut04,cakuli08,ga07}, which are the topic of this paper. The reason why frames became more and more important was that it can be hard to find a 'good' orthonormal basis, in the sense that it sometimes cannot fulfill given chosen  properties, as formally expressed e.g. in the Balian-Low theorem \cite{gr01} for Gabor frames, or some No-Go theorems for wavelets \cite{daubech1}.

This is also the reason why frames have become more popular in describing operators.
The numerical treatment  of operator equations, $Of =g$, requires a
discrete formulation, ${M} \vec{f} = \vec{g}$. This is often done with a
so-called  Galerkin scheme \cite{sauter2010boundary}, i.e. choosing atoms $\left( \psi_k \right)$ and look at matrices $\left< O \psi_k, \psi_lk \right>$.
In the Finite Element Method
\cite{brennscott1} and the Boundary Element Method \cite{Gauletal03}  usually spline-like
bases are used. Recently, wavelet bases \cite{DahSch99a} and frames \cite{Stevenson03,harbr08} have been applied.

% \vspace{-2mm}$.
On an abstract level, it is well known that for orthonormal sequences operators can
be uniquely described by a matrix representation \cite{gohberg1}.
An analogous result holds for frames and their duals \cite{xxlframoper1,xxlgro14}.
For solving this problem, frames are widely developed by many authors \cite{befe01,ch08,daubgromay86,feistro1}.

In approaches solving operator equations numerically one of the problems is the splitting of the considered spaces. Domain decomposition methods \cite{Werner09} solve a boundary value problem by splitting it into smaller boundary value problems on subdomains. Fusion frames allow the combination of solution on subspaces in a natural way. A first link of the concepts of space splittings and fusion frames was done in \cite{oswald09}. A first approach towards the matrix representation was done in \cite{sharxxl17}, where a view-point of a Gram-like operator was taken.

In this paper, we investigate this approach from a frame theory point of view, generalizing the approach in \cite{gohberg1} (for bases) and \cite{xxlframoper1} (for Hilbert frames). We settle the basic properties for a matrix representation of operators using fusion frames. In fact, we represent an operator by fusion frames and show that this representation is unique if the fusion frames are fusion Riesz bases.  We will settle the question how the invertibility of operators and matrices are related. In Section 2 we review basic notations and collect needed results. In Section 3 we give matrix representation of operators by Bessel fusion sequence and fusion frames. For an operator $O$ we obtain a matrix induced by two Bessel fusion sequences and conversely, associated with a matrix $M$ we define an operator induced by the matrix $M$ with respect to the Bessel fusion sequences. In Section 4 we obtain necessary and sufficient conditions for the invertibility  and pseudo-invertibility of such matrix representations. We restricted our attentions to Schatten $p-$class of operators in Section 5. We investigate the properties of matrix representations for Hilbert-Schmidt class of operator and show that for fusion frames $W$ and $V$, the space $(W\otimes V)$ constitutes a fusion frame for $S_2(\Hil)$, the set of Hilbert-Schmidt operators.

\section{Preliminaries}
Throughout this paper, $\pi_{W}$ denotes the orthogonal projection from $\mathcal{H}$
onto a closed subspace $W,$ $I_{\mathcal{H}}$  the identity operator on $\mathcal{H}$  and $\{e_{i}\}_{i\in I}$  an orthonormal basis for $\mathcal{H}$. By $\BL{\Hil_1,\Hil_2}$ we denote the space of all bounded operators between Hilbert spaces $\Hil_1$ and $\Hil_2$ and  write $\BL{\Hil}$ for $\Hil_1=\Hil_2=\Hil.$ Also, we represent the range and null space of a bounded operator $O\in \BL{\Hil_1,\Hil_2}$ by $\range{O}$ and $\kernel{O}$, respectively. Moreover, $O^{\dag}$ is the pseudo-inverse of    bounded and closed range operator  $O\in \BL{\Hil_1,\Hil_2}.$

%%%%%%%%%%%%%%%%%
\subsection{Fusion frames}

For each sequence $(W_{i},\omega_{i})$ of closed subspaces in $\mathcal{H}$, the space
\begin{equation*}
\left(\sum_{i\in I}\bigoplus W_{i}\right)_{\ell^{2}}=\left\{\{f_{i}\}_{i\in I}: f_{i}\in W_{i}, \sum_{i\in I}\|f_{i}\|^{2}<\infty \right\},
\end{equation*}
with the inner product
\begin{equation*}
\bigg\langle\{f_{i}\}_{i\in I},\{g_{i}\}_{i\in I} \bigg\rangle=\sum_{i\in I}\langle f_{i},g_{i}\rangle,
\end{equation*}
is a Hilbert space.

We now give the central definition of fusion frames:
\begin{definition}
Let $\{W_{i}\}_{i\in I}$ be a family of closed subspaces of $\mathcal{H}$ and $\{\omega _{i}\}_{i\in I}$
be a family of weights, i.e. $\omega_{i}>0, i\in I$. The sequence  $W=(W_{i},\omega_{i}
)$
is called a fusion frame for $\mathcal{H}$ if there exist constants $0<A_{W}\leq B_{W}<\infty$ such that
\begin{equation*}
A_{W}\|f\|^{2}\leq\sum_{i\in I}\omega _{i}^{2}\|\pi_{W_{i}}f\|^{2}\leq B_{W}\|f\|^{2}, \qquad (f\in \mathcal{H}).
\end{equation*}
The constants $A_{W}$ and $B_{W}$ are called \textit{fusion frame
bounds}. If we only assume the upper bound, we call
$(W_{i},\omega_{i})$ a $\textit{Bessel fusion
sequence}$. A fusion frame is called $\textit{tight}$, if $A_{W}$
and $B_{W}$ can be chosen to be equal, and $\textit{Parseval}$ if
$A_{W}=B_{W}=1$. If $\omega_{i}=\omega$ for all $i\in I$, the
collection $(W_{i},\omega_{i})$ is called
$\textit{$\omega$-uniform}$ and we abbreviate 1- uniform fusion
frames as $\{W_{i}\}_{i\in I}$. A fusion frame
$(W_{i},\omega_{i} )$ is said to be a $
\textit{fusion orthonormal basis}$ if $\mathcal{H}=\bigoplus_{i\in
I}W_{i}$ and it is called  a $\textit{Riesz decomposition}$ of
$\mathcal{H}$ if for every $f\in \mathcal {H} $ there is a unique
choice of $f_{i}\in W_{i}$ such that $f=\sum_{i\in I}f_{i}$.
%A
%family of subspaces is called
 %\em{complete} if $\clspan{W_i} = \Hil$.
\end{definition}
It is clear that every fusion orthonormal basis is a Riesz decomposition for $\mathcal{H}$, and also every Riesz decomposition is a 1-uniform fusion frame for $\mathcal{H}$ \cite{caskut04}.
Moreover, a family $\{W_{i}\}_{i\in I}$ of closed subspaces of $\mathcal{H}$
 is a fusion orthonormal basis if and only if it is
 a 1-uniform Parseval fusion frame \cite{caskut04}.\\

\noindent
Note that $S_W=id_\Hil$ if and only if $W$ is a fusion orthonormal basis.
In contrast, for Hilbert frames $S_\Psi=id_\Hil$ if and only if $\Psi$ is a Parseval frame.\\

The \textit{synthesis operator}
 $T_{W}:(\sum_{i\in I}\bigoplus W_{i})_{\ell^{2}}\rightarrow \mathcal{H}$
for a Bessel fusion sequence $(W_{i},\omega_{i})$ is
defined by
\begin{equation*}
T_{W}(\{f_{i}\}_{i\in I})=\sum_{i\in I}\omega_{i}f_{i}, \qquad (\{f_{i}\}_{i\in I}\in \sum_{i\in I}\oplus W_{i}).
\end{equation*}
The adjoint operator $T^{*}_{W}: \mathcal{H}\rightarrow (\sum_{i\in I}\bigoplus W_{i})_{\ell^{2}}$ which is called the $\textit{analysis operator}$
is given by
\begin{equation*}
T^{*}_{W}f=\{\omega_{i}\pi _{W_{i}}f\}_{i\in I}, \qquad  (f\in \mathcal{H}).
\end{equation*}
Both are bounded by $\sqrt{B_W}$.

If $W=(W_{i},\omega_{i}
)$ is a fusion frame, the $\textit{fusion frame operator}$ $S_{W}:\mathcal{H}\rightarrow \mathcal{H}$, which is defined by $S_{W}f=T_{W}T^{*}_{W}f=\sum_{i\in I}\omega_{i}^{2}\pi _{W_{i}}f$,
 is  bounded (with bound $B_W$), invertible and positive \cite{caskut04,ga07}.

Every Bessel fusion sequence
 $(V_{i},\upsilon_{i})$ is called a $\textit{G\v{a}vru\c{t}a-dual}$
  of  $(W_{i},\omega_{i})$, if
\begin{equation}\label{sec:reconstr1}
f=\sum_{i\in I}\omega_{i}\upsilon_{i}\pi_{V_{i}}S_{W}^{-1}\pi_{W_{i}}f,
\qquad (f\in \mathcal{H}),
\end{equation}
 for more details see \cite{caskut04,ga07}. From here on, for simplicity we say dual instead of  G\v{a}vru\c{t}a-dual.  The sequence of subspaces $\widetilde{W} := \left\{\left(S_{W}^{-1}W_{i},\omega_{i}\right)\right\}_{i\in I}$, which is a fusion frame for $\mathcal{H}$, is called the \textit{canonical dual} of $W$.

A Bessel fusion sequence $(V_{i},\upsilon_{i})$ is a dual of
fusion frame $W=(W_{i},\omega_{i})$ if and only if
\begin{equation} \label{sec:duality}
T_{V}\phi_{VW}T^{*}_{W}=\identity{\mathcal{H}} ,
\end{equation}
where the bounded operator $\phi_{VW}:(\sum_{i\in I}\bigoplus
W_{i})_{\ell^{2}}\rightarrow (\sum_{i\in I}\bigoplus
V_{i})_{\ell^{2}}$ is given by
\begin{equation}\label{phi}
\phi_{VW}(\{f_{i}\}_{i\in I})=\{\pi_{V_{i}}S_{W}^{-1}f_{i}\}_{i\in I}
\end{equation}
  and $\left\|\phi_{VW}\right\|\leq\left\|S_{W}^{-1}\right\|$.

Another approach to duality \cite{hemo14,behemoza14} uses a fixed arbitrary bounded operator $M:\ltiw \rightarrow \ltiv$. Starting with two fusion frames the duality is defined analogously to \eqref{sec:duality}, i.e.
$T_{V} \, M \, T^{*}_{W}=\identity{\mathcal{H}}$.
 We stick to the G\v{a}vru\c{t}a duals, but all results herein can be adapted to this other definition of duality.

 Let $\{W_{i}\}_{i\in I}$ be a family of closed subspaces of $\mathcal{H}$ and $\{\omega_{i}\}_{i\in I}$  a family of weights. We say that $(W_{i}, \omega_{i})$ is a \textit{fusion Riesz basis} for $\mathcal{H}$ if
 $\overline{\textrm{span}}\{W_i\}=\mathcal{H}$ and there exist constants $0<C\leq D<\infty$ such that for each finite subset $J\subseteq I$
\begin{equation}\label{riesz}
C\sum_{j\in J}\|f_{j}\|^{2}\leq \|\sum_{j\in J}\omega_{j}
f_{j}\|^2\leq D\sum_{j\in J}\|f_{j}\|^{2},  \qquad (f_{j}\in W_{j}).
\end{equation}

The next proposition explores fusion Riesz bases with respect to local frames and their operators.
%\begin{theorem}\cite{caskut04}\label{equi-Riesz}
%Let $W=\{W_{i}\}_{i\in I}$ be a fusion frame for $\mathcal{H}$ and $\{e_{ij}\}_{j\in J_{i}}$ be an orthonormal basis for $W_{i}$ for each $i\in I$. Then the following conditions are equivalent:
%\begin{enumerate}
%\item $W$ is a Riesz decomposition of $\mathcal{H}$.
%\item The synthesis operator $T_{W}$ is one to one.
%\item The analysis operator $T^{*}_{W}$ is onto.
%\item $W$ is a fusion Riesz basis for $\mathcal{H}$.
%\item $\{e_{ij}\}_{i\in I, j\in J_{i}}$ is a Riesz basis for $\mathcal{H}$.
%\end{enumerate}
%\end{theorem}

\begin{proposition}\cite{caskut04,sharxxl17} \label{equi-Riesz}
Let $(W_i,w_i)$ be a family of closed subspaces and $\{e_{ij}\}_{j\in J_{i}}$ be an orthonormal basis for $W_{i}$ for each $i\in I$. Then the following conditions are equivalent:
\begin{enumerate}
\item[(1)] $(W_i,w_i)$ is a Riesz decomposition of $\mathcal{H}$.
\item[(2)] The synthesis operator $T_{W}$ is bounded and bijective.
\item[(3)] The analysis operator $T^{*}_{W}$ is bounded and bijective.
\item[(4)] $(W_i,w_i)$ is a fusion Riesz basis for $\mathcal{H}$.
\item[(5)] $\{w_ie_{ij}\}_{i\in I, j\in J_{i}}$ is a Riesz basis for $\mathcal{H}$.
\end{enumerate}
\end{proposition}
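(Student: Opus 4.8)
The plan is to route all five conditions through one unitary identification. For each $i\in I$, expanding a vector $f_i\in W_i$ in the orthonormal basis $\{e_{ij}\}_{j\in J_i}$ defines a unitary operator $U:\left(\sum_{i\in I}\bigoplus W_i\right)_{\ell^2}\to\ell^2(\Lambda)$, where $\Lambda=\{(i,j):i\in I,\ j\in J_i\}$, and a one-line computation gives the factorization $T_W=D\circ U$, where $D:\ell^2(\Lambda)\to\mathcal{H}$, $D(\{c_{ij}\})=\sum_{i,j}c_{ij}\,w_ie_{ij}$, is precisely the synthesis operator of the system $\{w_ie_{ij}\}_{i\in I,\,j\in J_i}$. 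Because $U$ is unitary, $T_W$ is bounded (respectively bounded and bijective) if and only if $D$ is, and $T_W^{*}=U^{*}D^{*}$ then inherits the corresponding property from $D^{*}$. This factorization is the backbone of the whole argument.

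Granting it, (2)$\Leftrightarrow$(3) is just the standard fact that a bounded operator between Hilbert spaces is invertible if and only if its adjoint is. For (2)$\Leftrightarrow$(5) I would invoke the operator characterization of Riesz bases --- a sequence in $\mathcal{H}$ is a Riesz basis exactly when its synthesis operator is bounded and bijective --- and apply it to $D$; combined with $T_W=DU$ this says $\{w_ie_{ij}\}$ is a Riesz basis iff $T_W$ is bounded and bijective. For (4)$\Leftrightarrow$(5), substituting $f_j=\sum_k c_{jk}e_{jk}$ (so $\|f_j\|^2=\sum_k|c_{jk}|^2$ and $\sum_j\omega_j f_j=\sum_{j,k}c_{jk}\,\omega_j e_{jk}$) turns the two-sided estimate \eqref{riesz} verbatim into the Riesz-basis inequality for finite linear combinations of $\{\omega_i e_{ij}\}$; together with $\overline{\mathrm{span}}\{W_i\}=\overline{\mathrm{span}}\{w_ie_{ij}\}$ (using $w_i>0$) and the classical equivalence between that finite-combination formulation and the definition of a Riesz basis, this yields (4)$\Leftrightarrow$(5).

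It remains to connect the Riesz decomposition property (1) to, say, bijectivity of $T_W$. Surjectivity of $T_W$ amounts to the existence, for every $f$, of an $\ell^2$-summable family $\{g_i\}$ with $f=\sum_i w_ig_i=\sum_i f_i$, $f_i=w_ig_i\in W_i$, and injectivity of $T_W$ is the uniqueness of such a family; so if $T_W$ is bounded and bijective then (1) is immediate. Conversely, the uniqueness clause of a Riesz decomposition gives injectivity of $T_W$ at once, but promoting the bare existence of a decomposition $f=\sum_i f_i$ to surjectivity of $T_W$ requires knowing that the (weighted) components are automatically $\ell^2$-summable and that $T_W$ is bounded on the whole space $\left(\sum\bigoplus W_i\right)_{\ell^2}$ --- a uniform-boundedness/closed-graph argument. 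This is the step I expect to be the main obstacle, and it is exactly the part that may be quoted from \cite{caskut04}. Once (1)$\Leftrightarrow$(2) is in place, the chain (4)$\Leftrightarrow$(5)$\Leftrightarrow$(2)$\Leftrightarrow$(3) together with (1)$\Leftrightarrow$(2) closes the equivalence.
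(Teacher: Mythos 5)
The paper itself offers no proof of this proposition---it is quoted from \cite{caskut04,sharxxl17}---so there is no internal argument to compare against; I can only assess your route. Your handling of (2), (3), (4), (5) is sound and is essentially the standard one: the unitary $U$ of local ONB coordinates gives $T_W=D\circ U$ with $D$ the synthesis operator of $\{w_ie_{ij}\}$, whence (2)$\Leftrightarrow$(5) by the operator characterization of Riesz bases, (2)$\Leftrightarrow$(3) by adjoints, and (4)$\Leftrightarrow$(5) by the substitution $f_j=\sum_k c_{jk}e_{jk}$ together with $\overline{\mathrm{span}}\{W_i\}=\overline{\mathrm{span}}\{w_ie_{ij}\}$ (the passage between finite scalar sequences and finitely many $j$ with arbitrary $f_j\in W_j$ is a routine continuity step you gloss but could fill in).

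The genuine gap is the link with (1), in two places. First, you leave (1)$\Rightarrow$(2) unproved and anticipate a uniform-boundedness/closed-graph argument; but in this paper a Riesz decomposition is by definition a \emph{fusion frame} with the unique-decomposition property, and with that reading no Baire-category machinery is needed---nor could it work in the generality you envisage, since the bare unique-decomposition property for an arbitrary weighted family does not imply (2)--(5): take $W_i=\mathrm{span}\{e_i\}$ with $w_i=1/i$; decompositions are unique, yet $T_W$ is not surjective and $\{w_ie_i\}$ is not a Riesz basis. Using the paper's definition, the implication is short: the fusion frame property makes $S_W=T_WT_W^{*}$ invertible, hence $T_W$ surjective, and injectivity of $T_W$ follows at once from the uniqueness clause applied to the decomposition $0=\sum_i w_ig_i$ for any $\{g_i\}$ in the kernel. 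Second, in (2)$\Rightarrow$(1) injectivity of $T_W$ only yields uniqueness among decompositions whose weighted components lie in $\left(\sum_{i\in I}\bigoplus W_i\right)_{\ell^2}$, whereas the definition requires uniqueness among \emph{all} norm-convergent decompositions $f=\sum_i f_i$, $f_i\in W_i$; this needs one more line, e.g.\ since (2) gives (5), pair a convergent decomposition of $0$ with the biorthogonal system of the Riesz basis $\{w_ie_{ij}\}$ to see that every component vanishes (and note that the fusion frame bounds demanded by (1) also follow, from boundedness plus surjectivity of $T_W$). With these repairs your argument closes.
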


The following characterizations of fusion Riesz bases will be used frequently in this note.
\begin{proposition}
\label{R}\cite{mitra,sharxxl17}
Let $W=(W_{i},w_i)$ be a fusion frame in $\mathcal{H}$. Then the following are equivalent:
\begin{enumerate}
\item $W$ is a fusion Riesz  basis.
\item $S_{W}^{-1}W_{i}\perp W_{j}$ for all $i, j \in I, i\neq j$.
\item $\omega_{i}^2\pi_{W_{i}}S_{W}^{-1}\pi_{W_{j}}=\delta_{ij}\pi_{W_{j}}$, for all $i, j \in I$.
\end{enumerate}
\end{proposition}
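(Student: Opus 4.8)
The goal is to prove that, for a fusion frame $W=(W_i,\omega_i)$, the three conditions---($W$ is a fusion Riesz basis), ($S_W^{-1}W_i\perp W_j$ for $i\neq j$), and ($\omega_i^2\pi_{W_i}S_W^{-1}\pi_{W_j}=\delta_{ij}\pi_{W_j}$)---are equivalent. I would prove this as a cycle $(1)\Rightarrow(2)\Rightarrow(3)\Rightarrow(1)$, or alternatively establish $(2)\Leftrightarrow(3)$ directly (this is essentially a reformulation) and then close the loop with $(1)\Leftrightarrow(2)$ using the canonical dual. Let me sketch both halves.

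**The equivalence $(2)\Leftrightarrow(3)$.**
This should be the routine part. Condition (3) with $i=j$ reads $\omega_i^2\pi_{W_i}S_W^{-1}\pi_{W_i}=\pi_{W_i}$, and for $i\neq j$ reads $\pi_{W_i}S_W^{-1}\pi_{W_j}=0$. The off-diagonal statement $\pi_{W_i}S_W^{-1}\pi_{W_j}=0$ says exactly that $S_W^{-1}W_j\subseteq W_i^{\perp}$, i.e. $S_W^{-1}W_j\perp W_i$; since $S_W^{-1}$ is self-adjoint and positive, $\langle S_W^{-1}g_j,g_i\rangle=\langle g_j,S_W^{-1}g_i\rangle$, so this is symmetric in $i,j$ and equivalent to (2). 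The content is then to show that (2) (the orthogonality $S_W^{-1}W_i\perp W_j$, $i\neq j$) forces the diagonal identity $\omega_i^2\pi_{W_i}S_W^{-1}\pi_{W_i}=\pi_{W_i}$. Here I would use the canonical-dual reconstruction formula: applying \eqref{sec:reconstr1} with $V=\widetilde W=(S_W^{-1}W_i,\omega_i)$ gives $f=\sum_i\omega_i^2\pi_{S_W^{-1}W_i}S_W^{-1}\pi_{W_i}f$. Under the orthogonality assumption the subspaces $S_W^{-1}W_i$ are mutually orthogonal, so applying $\pi_{W_j}$ (and again the orthogonality, which forces $\pi_{W_j}\pi_{S_W^{-1}W_i}$ to vanish for $i\neq j$, using self-adjointness of the projections) isolates the $i=j$ term and yields $\pi_{W_j}f=\omega_j^2\pi_{W_j}\pi_{S_W^{-1}W_j}S_W^{-1}\pi_{W_j}f$; a short computation relating $\pi_{W_j}\pi_{S_W^{-1}W_j}$ to $\pi_{W_j}S_W^{-1}$ on $\range{\pi_{W_j}}$ then gives (3). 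Conversely (3) $\Rightarrow$ (2) is immediate from the off-diagonal part as noted.

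**The equivalence $(1)\Leftrightarrow(2)$.**
For $(1)\Rightarrow(2)$ I would invoke Proposition \ref{R}'s companion, Proposition \ref{equi-Riesz}: if $W$ is a fusion Riesz basis then $T_W$ is bijective, hence $\{\omega_i e_{ij}\}$ is a Riesz basis for $\mathcal H$ for any choice of local orthonormal bases. The canonical dual of a Riesz basis is biorthogonal to it: $\langle \omega_i e_{ij}, \widetilde{\omega_k e_{kl}}\rangle=\delta_{ik}\delta_{jl}$, and the canonical dual vectors are $S^{-1}(\omega_i e_{ij})$ where $S$ is the ordinary frame operator of $\{\omega_i e_{ij}\}$, which coincides with $S_W$. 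Biorthogonality across different blocks $i\neq k$ says $\langle \omega_i e_{ij}, S_W^{-1}\omega_k e_{kl}\rangle=0$ for all $j,l$, i.e. $W_i\perp S_W^{-1}W_k$, which is (2). For the converse $(2)\Rightarrow(1)$: assuming the subspaces $S_W^{-1}W_i$ are mutually orthogonal, and using that $\widetilde W$ is itself a fusion frame summing (via reconstruction) to the identity, one shows $T_W$ is injective---if $\sum_i\omega_i f_i=0$ with $f_i\in W_i$, pair with $S_W^{-1}\omega_j g_j$ for $g_j\in W_j$ and use orthogonality to conclude each $f_i=0$---and surjective since $\range{T_W}\supseteq\range{S_W}=\mathcal H$; then Proposition \ref{equi-Riesz} gives that $W$ is a fusion Riesz basis.

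**Main obstacle.**
The delicate point is the diagonal normalization in $(2)\Rightarrow(3)$: extracting the precise constant $\omega_j^2$ and the exact operator identity $\omega_j^2\pi_{W_j}S_W^{-1}\pi_{W_j}=\pi_{W_j}$ from the bare orthogonality hypothesis. One must be careful that $\pi_{W_j}\pi_{S_W^{-1}W_j}$ is \emph{not} simply $\pi_{W_j}$, and track how the canonical-dual reconstruction formula collapses to a single term; the cleanest route is probably to first prove $(1)\Leftrightarrow(2)$ and $(1)\Rightarrow(3)$ (the latter via biorthogonality of the Riesz basis $\{\omega_i e_{ij}\}$ and its canonical dual, which directly yields $\langle \omega_i e_{ij}, S_W^{-1}\omega_j e_{jl}\rangle=\delta_{ij}\langle e_{ij},e_{jl}\rangle$, i.e. condition (3) tested against orthonormal bases), then get $(3)\Rightarrow(2)$ for free. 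That ordering sidesteps the awkward direct manipulation of oblique products of projections.
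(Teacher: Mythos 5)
The paper itself gives no proof of this proposition: it is quoted from \cite{mitra,sharxxl17}, so there is no internal argument to compare against. Judged on its own, your cycle is correct, and it is essentially the standard argument from the cited sources (local orthonormal bases plus Proposition \ref{equi-Riesz}, biorthogonality of the Riesz basis $\{\omega_i e_{ij}\}$ with its canonical dual, and the reconstruction formula). Two remarks. First, the step you flag as the main obstacle is easier than you fear: from $S_W^{-1}S_Wf=f$ one has $f=\sum_i \omega_i^2 S_W^{-1}\pi_{W_i}f$, and applying $\pi_{W_j}$ kills every cross term by (2), giving $\pi_{W_j}f=\omega_j^2\pi_{W_j}S_W^{-1}\pi_{W_j}f$ at once; no oblique product $\pi_{W_j}\pi_{S_W^{-1}W_j}$ ever needs to be analysed, since $\pi_{S_W^{-1}W_j}S_W^{-1}\pi_{W_j}=S_W^{-1}\pi_{W_j}$ because the range of $S_W^{-1}\pi_{W_j}$ already lies in $S_W^{-1}W_j$. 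Relatedly, your parenthetical claim that (2) makes the subspaces $S_W^{-1}W_i$ mutually orthogonal is not what (2) says (it says $S_W^{-1}W_i\perp W_j$); fortunately the step you actually use, $\pi_{W_j}\pi_{S_W^{-1}W_i}=0$ for $i\neq j$, is exactly (2), so nothing breaks. Second, in $(2)\Rightarrow(1)$ you should make the injectivity step explicit: from $\sum_i\omega_i f_i=0$ and (2) you get $\langle f_j,S_W^{-1}g_j\rangle=0$ for all $g_j\in W_j$; choosing $g_j=f_j$ and using that $S_W^{-1}$ is strictly positive yields $f_j=0$. Surjectivity via $\range{T_W}\supseteq\range{S_W}=\Hil$ and the appeal to Proposition \ref{equi-Riesz} then close the loop; also note, as you implicitly do, that the frame operator of $\{\omega_i e_{ij}\}$ coincides with $S_W$, which is what legitimises the biorthogonality computation for $(1)\Rightarrow(2)$ and $(1)\Rightarrow(3)$.
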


\subsection{Tensor Product of Operators} \label{sec:tensop0}

Let $S \in \BL{\Hil_3,\Hil_4}$ and $T \in \BL{\Hil_1,\Hil_2}$, the tensor product of two operators as an element of $BL{\BL{\Hil_1,\Hil_3},\BL{\Hil_2,\Hil_4}}$
 is defined as follows
\begin{equation}
(S\otimes T)(O):= S \circ O \circ T^\ast,\qquad (O\in \BL{\Hil_1,\Hil_3}).
\end{equation}

The basic properties of tensor product of operators, see e.g. \cite{defant}, can be summarized in the following:
\begin{enumerate}
%\item[(1)] $\identity{\Hil}\otimes \identity{\Hil}= \identity{\Hil}$.
%\item[(2)] $(S\otimes T)(A\otimes B)=(SA)\otimes (TB).$
\item[(1)] $(S\otimes T)^*=T^*\otimes S^*$.
\item[(2)] $\left\|S\otimes T\right\|=\|S\|\|T\|.$
\item[(3)] $S\otimes T$ is   injective, surjective, respectively invertible if and only if $S$ and $T$ are injective, surjective, respectively invertible. In the later case, $(S\otimes T)^{-1}=S^{-1}\otimes T^{-1}$,
\end{enumerate}
 when  $ S\in \BL{\Hil_1,\Hil_4}$ and $ T\in \BL{\Hil_1,\Hil_2}$. Moreover, if $\Hil_1=\Hil_2=\Hil_4=\Hil$, then
\begin{enumerate}
\item[(4)] $\identity{\Hil}\otimes \identity{\Hil}= \identity{\Hil}$.
\item[(5)] $(S\otimes T)(A\otimes B)=(SA)\otimes (TB).$
\end{enumerate}

%Note that for any operator $O \in \BL{\Hil}$ we can define an operator in $\BL{\ltiw,\ltiv}$ by $\pi^*_{V_i} O \pi_{W_i}$, where the adjoint .... multiplier? We can also use the full (i,j) set .....

%Use for ... block ?

\section{Matrix Representations} \label{sec:descropfram0}
For orthonormal sequence it is well known that operators can be uniquely described by a matrix representation \cite{gohberg1}.
For sequences the matrix representation using Bessel sequences, frames, Riesz bases and orthonormal bases have been investigated theoretically in \cite{xxlframoper1}. The matrix representation of frames has also been used for the numerical treatment of operator equations \cite{dafora05,harbr08,Stevenson03}.

Here we extend the concept to Bessel fusion sequences and fusion frames. For that let $(W_i,w_i)$ and $(V_i,v_i)$  be two sets of  closed subspaces of $\Hil$ and $B_{j,i} : V_i \rightarrow W_j$ is a bounded operator. Define the \emph{block-matrix of operators} \cite{MANA:MANA19941670102} $\mathbf{B} : \ltiv \rightarrow \ltiw$ as
\begin{equation} \label{eq:blockmatrixop}
\mathbf{B f}:= \sum \limits_i B_{j,i} f_i,
\end{equation}
where $\mathbf{f}=\{f_i\}_{i\in I}\in \ltiv$.\footnote{This could be called a generalized subband matrix, motivated by system identification applications \cite{PerfAnSBI}.}
This extends the definition of the operator defined by a (possibly infinite) matrix: $\left( M c\right)_j = \sum \limits_k M_{j,k} c_k$.
Any operator in $\BL{\ltiv,\ltiw}
$ can be represented as matrix of operators. (See \cite{Maddox:101881,kohl19}.)

We will start with the more general case of Bessel fusion sequences. Note that we will use the notation $\norm{\Hil_1 \rightarrow \Hil_2}{.}$ for the operator norm in $\BL{\Hil_1, \Hil_2}$ to be able to distinguish between different operator norms.

\subsection{Matrix Representation for Bessel fusion Sequences}

Assume that $W=(W_i,w_i)$ and $V=(V_i,v_i)$ are  Bessel fusion sequences in $\Hil_1$ and $\Hil_2$, respectively. Using the tensor product of operators, for any operator $O\in \BL{\Hil_1,\Hil_2}$  we can define a matrix operator in  $\BL{\ltiv,\ltiw}$ by
\begin{eqnarray}\label{tensor}\left(\mathcal{M}^{(W,V)}(O)\right)_{i,j}=w_jv_i\pi_{W_j}O\pi_{V_i}, \qquad (i,j\in I).
\end{eqnarray}

\begin{theorem} \label{sec:matbyfram1} Let $W = (W_i,w_i)$ and $V = (V_i,v_i)$ be Bessel fusion sequences in Hilbert spaces $\Hil_1$ and $\Hil_2$ with  bounds $B_W$ and $B_V$, respectively.
\begin{enumerate} \item Let $O : \Hil_1 \rightarrow \Hil_2$ be a bounded linear operator. Then the matrix of operators \eqref{tensor}, denoted by
 $ \mathcal{M}^{(W , V)}(O),$
defines a bounded operator from $\ltiv$ to $\ltiw$ with
\begin{eqnarray*}
\norm{\ltiv \rightarrow \ltiw}{\mathcal{M}^{(W,V)}\left( O \right)} \le \sqrt{B_W \cdot B_V} \cdot \norm{\Hil_1 \rightarrow \Hil_2}{O}.
\end{eqnarray*}
As an operator $\ltiv \rightarrow \ltiw$
 \begin{equation}\label{DefM}
 {\mathcal M}^{(W , V)} \left( O \right) = T^\star_{W} O T_V=\left(T_W^*\otimes T_V^*\right)(O).
\end{equation}
This means the function ${\mathcal{M}}^{(W , V)} : \BL{\Hil} \rightarrow \BL{\ltiv,\ltiw}$ is a well-defined bounded operator.
\item On the other hand, let $M$ be an infinite matrix of operators defining a bounded operator from $\ltiv$ to $\ltiw$, $\left(M f\right)_i = \sum \limits_k M_{i,k} f_k$. Then the operator $\mathcal{O}^(W,V)$ defined by
$$ \mathcal{O}^{(W,V)} (M) := T_{W} M T_{V}^{*} = \left( T_W\otimes T_V \right) (M)$$
is a bounded operator from $\Hil_1$ to $\Hil_2$ with
$$\norm{\Hil_1 \rightarrow \Hil_2}{\mathcal{O}^{(W,V)} \left( M \right)} \le \sqrt{B_W \cdot B_V} \norm{\ltiv \rightarrow \ltiw}{M}.$$

This means the function ${\mathcal O}^{(W , V)} : \BL{\ltiv,\ltiw} \rightarrow \BL{\Hil_1,\Hil_2}$ is a well-defined bounded operator.
\end{enumerate}
\end{theorem}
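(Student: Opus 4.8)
The plan is to reduce both parts to two facts already recorded in Section~2: that the synthesis and analysis operators of a Bessel fusion sequence are bounded with norm at most the square root of the Bessel bound, and the defining identity $(S\otimes T)(O)=S\circ O\circ T^{\ast}$ of the tensor product of operators together with its norm property~(2). Once the representing identities \eqref{DefM} are established, boundedness and the asserted estimates are immediate, since $\mathcal M^{(W,V)}$ and $\mathcal O^{(W,V)}$ are then realized as (left/right) multiplication by fixed bounded operators, hence obviously linear as well.

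For Part~1, I would first verify the operator identity $\mathcal M^{(W,V)}(O)=T^{\ast}_{W}OT_{V}$ by evaluating the right-hand side on an arbitrary $\mathbf f=\{f_i\}_{i\in I}\in\ltiv$. Because $(V_i,v_i)$ is Bessel, the synthesis series $T_V\mathbf f=\sum_i v_i f_i$ converges (unconditionally) in the underlying Hilbert space; applying $O$ and then $T^{\ast}_{W}$, and using continuity of $O$ and of each orthogonal projection $\pi_{W_j}$ to pull $\pi_{W_j}$ inside the sum, the $j$-th component of $T^{\ast}_{W}OT_{V}\mathbf f$ is $\sum_i w_j v_i\,\pi_{W_j}O f_i=\sum_i w_j v_i\,\pi_{W_j}O\pi_{V_i}f_i$, where in the last equality we used $f_i\in V_i$, i.e. $\pi_{V_i}f_i=f_i$. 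This is precisely the action \eqref{eq:blockmatrixop} of the block-matrix of operators whose entries are given by \eqref{tensor}, so the two operators agree; note in passing that $\{w_j\pi_{W_j}g\}_j\in\ltiw$ for every $g$ is exactly the well-definedness of $T^{\ast}_{W}$ recalled in Section~2. Since $(T^{\ast}_{V})^{\ast}=T_V$, the chain $T^{\ast}_{W}OT_{V}=(T^{\ast}_{W}\otimes T^{\ast}_{V})(O)$ is just the definition of the tensor product. Boundedness with the claimed bound now follows either by composing the three bounded maps $T_V$, $O$, $T^{\ast}_{W}$, or directly from property~(2): $\norm{\ltiv\rightarrow\ltiw}{\mathcal M^{(W,V)}(O)}\le\|T^{\ast}_{W}\|\,\|T^{\ast}_{V}\|\,\norm{\Hil_1\rightarrow\Hil_2}{O}\le\sqrt{B_W\cdot B_V}\,\norm{\Hil_1\rightarrow\Hil_2}{O}$; linearity of $O\mapsto T^{\ast}_{W}OT_{V}$ is clear, so $\mathcal M^{(W,V)}$ is a well-defined bounded operator.

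For Part~2 the argument is the mirror image: by the mapping properties of $T^{\ast}_{V}$, $M$ and $T_W$, the composition $\mathcal O^{(W,V)}(M)=T_{W}MT^{\ast}_{V}=(T_W\otimes T_V)(M)$ is a bounded operator $\Hil_1\rightarrow\Hil_2$ with $\norm{\Hil_1\rightarrow\Hil_2}{\mathcal O^{(W,V)}(M)}\le\|T_W\|\,\norm{\ltiv\rightarrow\ltiw}{M}\,\|T^{\ast}_{V}\|\le\sqrt{B_W\cdot B_V}\,\norm{\ltiv\rightarrow\ltiw}{M}$, and linearity in $M$ is again immediate. I do not anticipate a genuine obstacle in this theorem: the single point that deserves care is the interchange of the unconditionally convergent synthesis series with the projections $\pi_{W_j}$ in Part~1, which is legitimate by boundedness of $T_V$ together with continuity of $O$ and of $\pi_{W_j}$; everything else is bookkeeping with the Section~2 estimates and the listed properties of the tensor product of operators.
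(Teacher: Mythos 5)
Your proof is correct and follows essentially the same route as the paper: verify componentwise that the block matrix with entries \eqref{tensor} acts as $T_W^{*}OT_V$ (equivalently $(T_W^{*}\otimes T_V^{*})(O)$), then deduce the norm bound from $\|T_W^{*}\|,\|T_V\|\le\sqrt{B_W},\sqrt{B_V}$, and mirror the argument for $\mathcal O^{(W,V)}$. If anything, your write-up is slightly more complete, since you justify interchanging the synthesis series with the projections and carry out Part 2 explicitly, which the paper leaves to the reader.
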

\begin{proof}
Let $O \in \BL{\Hil_1,\Hil_2}$. Then
\begin{eqnarray*}
\left( \mathcal{M}^{(W , V)} \left( O \right) f\right)_j &=& \sum \limits_k \left( \mathcal{M}^{(W , V)} \left( O \right) \right)_{j,k} f_k\\
 &=&  \sum \limits_k v_k w_j \pi_{W_j} O \pi_{V_k} f_k \\
&=& w_j \pi_{W_j} \sum \limits_k v_k O f_k \\
&=& w_j \pi_{W_j} O \sum \limits_k v_k  f_k = \left(\left(T^\star_W\otimes T_V\right) (O) f\right)_j.
\end{eqnarray*}
%=  \left< \sum \limits_k  c_k O \psi_k, \phi_j \right> = \left< O \sum \limits_k c_k \psi_k, \phi_j \right> = \left< O D_{\Psi} c, \phi_j \right>
Hence,
\begin{eqnarray*}
 \norm{\ltiw}{\mathcal{M}^{(W , V)} f}& \le&  \sqrt{B_W B_V} \norm{\Hil_1 \rightarrow \Hil_2}{O} \norm{\ltiw}{f}.
\end{eqnarray*}
The second part can be proved.
\end{proof} For an operator $O$ and a matrix $M$ as in Theorem \ref{sec:matbyfram1}, we call ${\mathcal M}^{(W,V)}(O)$ the \em matrix induced by the operator $O$ \em with respect to the Bessel fusion sequences $W = (W_i,w_i)$ and $V = (V_i,v_i)$, %\index{matrix!induced by an operator}
and ${\mathcal O}^{(W,V)}(M)$ the \em operator induced by the matrix $M$ \em with respect to the Bessel fusion sequences $W$ and $V$.
For an example, suppose that
 $W=(W_i,w_i)$, $V=(V_i,v_i)$ be  fusion frames    and $W^u=(W_i,1)$, $V^u=(V_i,1)$. Then
$$ \diag{\mathcal M^{(W^u,V^u)}(S_V^{-1})} = \phi_{W,V}.$$

Trivially, we have
\begin{eqnarray}
\left( \mathcal{M}^{(W,V)}\left( O \right) \right)^* & = & \mathcal{M}^{(V,W)}\left( O^* \right) \label{eq:adjM} \\
\left( \mathcal{O}^{(W,V)} (M) \right)^* & = & \mathcal{O}^{(V,W)} (M^*).  \label{eq:adjO}
\end{eqnarray}

\subsection{Matrix Representation for Fusion Frames}

Let $W=(W_i,w_i)$ and $V=(V_i,v_i)$ be fusion frames with the frame operators $S_W$ and $S_V,$ respectively. Then we define operators by means of $ \mathcal{M}^{(W,V)}$ and $ \mathcal{O}^{(W,V)}$:
$$ {\mathcal M}_{\otimes}^{(W , V)} = \left( T^*_{W} \otimes T^*_V  \right) \circ \left( S_W^{-1/2} \otimes S_V^{-1/2} \right)=  \left(  \mathcal{M}^{(W,V)} \right) \circ \left( S_W^{-1/2} \otimes S_V^{-1/2} \right).$$
On the other hand
$$ \mathcal{O}_{\otimes}^{(W,V)} =  \left( S_W^{-1/2} \otimes \circ S_V^{-1/2} \right) \circ \left( T_{W} \otimes T_V \right) =  \left( S_W^{-1/2} \otimes  S_V^{-1/2} \right) \circ \left( \mathcal{O}^{(W,V)} \right).$$
 These operators have nicer properties than the original ones, see below.
%%%%%%%%%%%%%%%%%%%%%%%%%%%%%
\begin{theorem}
Let $W=(W_i,w_i)$ and $V=(V_i,v_i)$ be fusion frames. Then $ \mathcal{M}_{\otimes}^{(W,V)}$ and $ \mathcal{O}_{\otimes}^{(W,V)}$ are bounded operators, moreover,
\begin{eqnarray} \label{eq:operreconst1}
\mathcal{O}_{\otimes}^{(W,V)} \mathcal{M}_{\otimes}^{(W,V)} = \identity{\BL{\Hil}}.
\end{eqnarray}
\end{theorem}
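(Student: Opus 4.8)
The plan is to reduce the claimed identity to the two basic facts already available in the excerpt: the reconstruction property of fusion frames and the algebraic behaviour of the tensor product of operators. Recall that for a fusion frame $W$ one has $S_W = T_W T_W^*$, hence $S_W^{-1/2} T_W T_W^* S_W^{-1/2} = \identity{\Hil}$; equivalently, writing $\widetilde T_W := S_W^{-1/2} T_W$, the operator $\widetilde T_W$ is a co-isometry on the relevant $\ell^2$-sum and $\widetilde T_W \widetilde T_W^* = \identity{\Hil}$. The same holds for $V$. This is the frame-theoretic heart of the matter, and everything else is bookkeeping with the tensor notation.

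First I would record boundedness: $\mathcal{M}_\otimes^{(W,V)}$ is a composition of $\mathcal{M}^{(W,V)}$ (bounded by Theorem \ref{sec:matbyfram1}) with $S_W^{-1/2}\otimes S_V^{-1/2}$, which is bounded since $S_W^{-1/2}$ and $S_V^{-1/2}$ are bounded (the fusion frame operators are boundedly invertible and positive, so their square roots are too) and by property (2) of the tensor product $\|S_W^{-1/2}\otimes S_V^{-1/2}\| = \|S_W^{-1/2}\|\,\|S_V^{-1/2}\|$; similarly for $\mathcal{O}_\otimes^{(W,V)}$. Next I would compute the composition using \eqref{DefM}, $\mathcal{M}^{(W,V)}(O) = (T_W^*\otimes T_V^*)(O)$, and the analogous formula $\mathcal{O}^{(W,V)}(M) = (T_W\otimes T_V)(M)$, together with the multiplicativity rule (5), $(S\otimes T)(A\otimes B) = (SA)\otimes (TB)$:
\begin{align*}
\mathcal{O}_\otimes^{(W,V)} \mathcal{M}_\otimes^{(W,V)}
&= (S_W^{-1/2}\otimes S_V^{-1/2})(T_W\otimes T_V)(T_W^*\otimes T_V^*)(S_W^{-1/2}\otimes S_V^{-1/2}) \\
&= \bigl(S_W^{-1/2}T_W T_W^* S_W^{-1/2}\bigr) \otimes \bigl(S_V^{-1/2}T_V T_V^* S_V^{-1/2}\bigr) \\
&= (S_W^{-1/2}S_W S_W^{-1/2}) \otimes (S_V^{-1/2}S_V S_V^{-1/2}) = \identity{\Hil}\otimes\identity{\Hil} = \identity{\BL{\Hil}},
\end{align*}
where in the last line I used $S_W = T_W T_W^*$, the commutation of $S_W$ with its functional calculus, and property (4). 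One subtlety to state carefully is associativity of these compositions and the fact that rule (5) applies only when the intermediate Hilbert spaces match; here they do, since $T_W^*\colon \Hil\to \ltiw$ composes with $T_W\colon\ltiw\to\Hil$ and $S_W^{-1/2}$ acts on $\Hil$, so each factored product is a genuine operator on $\Hil$ (resp. $\Hil$), and the tensor factorization is legitimate.

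I do not expect a serious obstacle: the only place one must be attentive is that $\mathcal{M}_\otimes$ and $\mathcal{O}_\otimes$ are defined as maps on operator spaces (elements of $\BL{\Hil}$, viewed as the $\otimes$-space), so all identities should be read at the level of the tensor-product action on an arbitrary operator $O\in\BL{\Hil}$, and one should make sure the order of composition in the definitions of $\mathcal{M}_\otimes$ and $\mathcal{O}_\otimes$ is the one that makes the telescoping above valid — i.e. that $S_W^{-1/2}\otimes S_V^{-1/2}$ sits on the "inside" for $\mathcal{M}_\otimes$ and on the "outside" for $\mathcal{O}_\otimes$, which is exactly how they are written in the display preceding the theorem. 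With that matching confirmed, the computation above is the whole proof; the reverse composition $\mathcal{M}_\otimes\mathcal{O}_\otimes$ is \emph{not} the identity in general (only on the range), which is consistent with the fact that $\widetilde T_W$ need not be invertible unless $W$ is a fusion Riesz basis.
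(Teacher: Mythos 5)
Your proof is correct and follows essentially the same route as the paper: boundedness via Theorem \ref{sec:matbyfram1} together with the boundedness of $S_W^{-1/2}\otimes S_V^{-1/2}$, and the reconstruction identity reduced to $S_W^{-1/2}T_WT_W^*S_W^{-1/2}=\identity{\Hil}$ and its $V$-analogue. The only cosmetic difference is that you factor the composition at the tensor level using rules (4) and (5), whereas the paper evaluates the same chain on an arbitrary $O\in\BL{\Hil}$.
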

\begin{proof}
Using Theorem \ref{sec:matbyfram1} we have
\begin{eqnarray*}
\norm{}{\mathcal{M}_\otimes^{(W,V)}} &\le & \norm{}{ \left(  \mathcal{M}^{(W,V)} \right) \circ \left( S_W^{-1/2} \otimes S_V^{-1/2} \right)}\\
&\le&\sqrt{B_W \cdot B_V}\norm{}{S_W^{-1/2}}\norm{}{S_V^{-1/2}}\le \sqrt{\frac{B_W \cdot B_V}{{A_W \cdot A_V}}}.
\end{eqnarray*}
Similarly,
$$\norm{}{\mathcal{O}_\otimes^{(W,V)}}  \le \sqrt{\frac{B_W \cdot B_V}{{A_W \cdot A_V}}}.$$
Moreover,
\begin{eqnarray*}
\mathcal{O}_{\otimes}^{(W,V)} \mathcal{M}_{\otimes}^{(W,V)}O&=& \left( S_{W}^{-1/2} \otimes S_{V}^{-1/2} \right) \mathcal{O}^{(W,V)} \mathcal{M}^{(W,V)} \left( S_{W}^{-1/2} \otimes S_{V}^{-1/2} \right)  O\\
&=& S_{W}^{-1/2} T_{W}T_{W}^* S_{W}^{-1/2}   O
S_{V}^{-1/2} T_{V}T_{V}^{*}S_{V}^{-1/2}=O,
\end{eqnarray*}for all $O\in \BL{\Hil}$.
\end{proof}

%%%%%%%%%%%%%%%%%%

Notice that in \cite{sharxxl17} we investigated the $U$-cross Gram matrix, i.e. the operator
$\phi_{WV}T_W^*OT_{V}$, denoted  by $\mathcal{G}_{O,W,V}$. In the notation of the current paper we have $$\mathcal{G}_{O,W,V} = \phi_{WV} \mathcal{M}^{(W , V)} \left( O \right) = \left\{\pi_{V_i} S_W^{-1} \mathcal{M}^{(W , V)} \left( O \right)\right\}_{i\in I}.$$
There a reconstruction formula is shown:
$$ \left[ T_w \otimes \left( T_W^* S_W^{-1} \right) \right] \mathcal{G}_{O,W,V} = O. $$
Note the similarity and difference to $\mathcal{M}_{\otimes}^{(W , V)} \left( O \right) = \mathcal{M}^{(W , V)} \left( S_W^{-1/2} O S_V^{-1/2} \right)$. Besides the different definition, the scope of this paper is also different, see Theorem \ref{inv-riesz} and Theorem \ref{6} . Nevertheless let us summarize some of the results in \cite{sharxxl17} interesting for this manuscript using the new terminology.
If  $O=I_{\mathcal{H}}$, it is called \textit{fusion cross Gram matrix} and denoted by $\mathcal{G}_{W,V}$. We used $\mathcal{G}_{W}$ for $\mathcal{G}_{W,W}$; the so called \textit{fusion Gram matrix}. For a fusion orthonormal basis $W=(W_{i},\omega_i)$, it is shown that $\mathcal{G}_{W}=\phi_{WW} = I_{\sum_{i\in I}\bigoplus W_i}$. Moreover, $\mathcal{G}_{O,W,W}$ is invertible if and only if $W$ is a fusion  Riesz basis and $O$ is invertible, in this case,
$$\mathcal{G}_{O,W,W}^{-1} = \mathcal{G}_{S_{W^u}^{-1}O^{-1}S_{W^u}^{-1},W,W},$$ where $W^u = (W_i, 1)$ is the uniformization of $W$.
Similar results are obtained for $\mathcal{G}_{O,\widetilde{W},W}$,  $\mathcal{G}_{O,W,\widetilde{W}}$ and $\mathcal{G}_{O,\widetilde{W},\widetilde{W}}$. Also, if $O$ has closed range and $O^{\dagger}$ denote its pseudo inverse, then with some additional assumptions we obtain
 \begin{equation*}
\left(\mathcal{G}_{O,\widetilde{W},W}\right)^{\dagger}=\mathcal{G}_{O^{\dagger},\widetilde{W},W}, \qquad
\left(\mathcal{G}_{O,W,W}\right)^{\dagger}= \mathcal{G}_{L_WO^{\dagger}L_W^{-1},W,W},
\end{equation*}
where $L_W=T_W\phi_{WW}T_{W}^*$ is the so called "alternate fusion frame operator" of $W$. For the proofs we refer the reader to \cite{sharxxl17}.

\section{Invertibility}

 In this section we investigate some conditions for the invertibility of $\mathcal{O}^{(W,V)}$, $\mathcal{M}^{(W,V)}$, $\mathcal{O}_{\otimes}^{(W,V)}$, $\mathcal{M}_{\otimes}^{(W,V)}$ and the operators induced by a matrix $M$ and  operator $O$. We will see that their invertibilities, for some cases, are equivalent.

Specifically,  let $W=(W_i,w_i)$, $V=(V_i,v_i)$ be Bessel fusion sequences,  $O\in \BL{\Hil}$ and $M\in\BL{\ltiv,\ltiw}$. Then $\mathcal{M}^{(W,V)}(O)$ has left (resp. right) inverse implies that $V$ (resp. $W$) is fusion Riesz sequence. Similarly, $\mathcal{O}^{(W,V)}(M)$ has left (resp. right) inverse implies that $V$ (resp. $W$) is fusion frame.

As an easy consequence of the properties of the tensors of operators, see Subsection \ref{sec:tensop0} we get
\begin{corollary}\label{Mlr}
Let $W=(W_i,w_i)$ and $V=(V_i,v_i)$ be Bessel fusion sequences. Then we have the following properties:
\begin{enumerate}
\item The operator $\mathcal{M}^{(W,V)} = T_W^* \otimes T_V^*$ is surjective  if and only if $T_W^*$ and $T_V^*$ are surjective (i.e. $W$ and $V$ are fusion Riesz bases).
\item The operator $\mathcal{M}^{(W,V)} = T_W^* \otimes T_V^*$ is injective, if and only if $T^*_W$ and $T^*_V$ is injective (i.e. $W$ and $V$ are complete).
\item The operator $\mathcal{O}^{(W,V)} = T_W \otimes T_V$ is surjective if and only if $T_W$ and $T_V$ are surjective (i.e. $V$ and $W$ are fusion frames).
\item The operator $\mathcal{O}^{(W,V)} = T_W \otimes T_V$ is injective if and only if $T_W$ and $T_V$ are injective.
\end{enumerate}
\end{corollary}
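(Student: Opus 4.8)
The plan is to derive all four assertions by combining the tensor-product dictionary recorded in Subsection \ref{sec:tensop0} with the standard translation between the mapping properties of the (co)synthesis operators and the fusion-frame vocabulary. First I would note, via \eqref{DefM}, that $\mathcal{M}^{(W,V)} = T_W^* \otimes T_V^*$ and $\mathcal{O}^{(W,V)} = T_W \otimes T_V$, and then invoke the third listed property of the tensor product of operators from Subsection \ref{sec:tensop0}: a tensor $S\otimes T$ is injective, respectively surjective, if and only if both $S$ and $T$ are. With $S=T_W^*$, $T=T_V^*$ this gives the first ``if and only if'' in (1) and (2), and with $S=T_W$, $T=T_V$ the first ``if and only if'' in (3) and (4). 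The only thing to check is that the Hilbert spaces $\ltiw$, $\ltiv$, $\Hil_1$, $\Hil_2$ are arranged as that property requires, which is immediate from \eqref{DefM}.

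It then remains to match the conditions on $T_W$, $T_W^*$ (and the $V$-analogues) with fusion-frame terminology, using throughout that $W$ is Bessel with bound $B_W$ so that the upper estimates come for free. Since $T_W^* f = \{\omega_i\pi_{W_i}f\}_{i\in I}$, we have $\kernel{T_W^*} = \{f : f\perp W_i \text{ for all } i\} = (\overline{\textrm{span}}\{W_i\})^{\perp}$, so $T_W^*$ is injective exactly when $W$ is complete; this, applied to $W$ and $V$, is the parenthesis in (2). Next, $T_W$ is surjective iff its adjoint $T_W^*$ is bounded below, i.e.\ $\sum_i \omega_i^2\|\pi_{W_i}f\|^2 = \|T_W^* f\|^2 \ge A\|f\|^2$ for some $A>0$ and all $f$; together with the Bessel bound this is exactly the fusion frame inequality, which is the parenthesis in (3). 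Finally, for (1) the closed range theorem turns surjectivity of $T_W^*$ into $T_W$ being injective with closed range, equivalently bounded below, equivalently the lower inequality of \eqref{riesz}; with the Bessel upper bound this is the fusion Riesz sequence condition, and one can alternatively just quote equivalences (2)--(4) of Proposition \ref{equi-Riesz}. In (4) no renaming is needed, and assembling these identifications gives the corollary.

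I expect the only real obstacle to be bookkeeping rather than a substantive difficulty: one must keep careful track of the surjective/bounded-below duality for bounded Hilbert-space operators and of exactly which fusion-frame notion attaches to each case --- in particular the distinction between ``$T_W$ bounded below'' (a fusion Riesz sequence, i.e.\ only the lower Riesz inequality) and ``$T_W$ bijective'' (a fusion Riesz basis, which in addition needs completeness) --- so that the identification in (1) is stated under the correct hypothesis, in line with the ``fusion Riesz sequence'' wording used just before the corollary.
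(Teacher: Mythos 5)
Your proposal is correct and takes essentially the same route as the paper, which offers no separate argument but states the corollary as an immediate consequence of property (3) of tensor products of operators in Subsection 2.2, exactly as you do, with the standard translations (injectivity of $T_W^*$ $=$ completeness, surjectivity of $T_W$ $=$ fusion frame, surjectivity of $T_W^*$ $=$ $T_W$ bounded below) filled in. Your closing caveat is a valid precision: surjectivity of $T_W^*$ alone only yields a fusion Riesz \emph{sequence}, so the parenthetical ``fusion Riesz bases'' in item (1) implicitly needs completeness, which holds in the paper's later applications where $W$ and $V$ are fusion frames.
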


%%%%%%%%%%%%%%%%%%%%%%%%%%%%%%%%%%%%%%%%%%%%%%%%%%%%%%%%%%%%%%%%%%%
\subsection{Banach-algebra properties}

For every fusion frame $W=(W_i,w_i)$, the function $\mathcal M^{(W,W)}$ is a Banach-algebra homomorphism between the algebra of bounded operators $\BL{\Hil}$ and the matrices of operators in $\BL{\ltiw}$ if and only if $W$ is a fusion orthonormal basis in $\Hil$.
Indeed, $\mathcal{M}^{(W,W)}$ is a homomorphism only if
\begin{eqnarray*}
\mathcal{M}^{(W,W)}(O_1)\mathcal{M}^{(W,W)}(O_2)&=& T_{W}^*O_1S_WO_2T_W\\
&=&T_{W}^*O_1O_2T_W\\
&=&\mathcal{M}^{(W,W)}(O_1O_2).
\end{eqnarray*}
In particular, $O_1S_WO_2=O_1O_2$. This easily follows that (take $O_1=O_2=\identity{\Hil}$) $S_W=\identity{\Hil}$ and so $W$ is a fusion orthonormal basis.

\begin{proposition} For every fusion frame $W = (W_i,w_i)$, the function $\mathcal M_{\otimes}^{(W,W)}$ is a $C^*$-algebra monomorphism between $\BL{\Hil}$ and the matrices of operators in $\BL{\ltiw}$. It is an isomorphism if and only if $W$ is a fusion Riesz basis.
\end{proposition}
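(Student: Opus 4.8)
The plan is to recognise $\mathcal{M}_{\otimes}^{(W,W)}$ as conjugation by a coisometry and then read off every claimed property from elementary operator theory. Using the identity $\mathcal{M}_{\otimes}^{(W,V)}(O)=\mathcal{M}^{(W,V)}(S_W^{-1/2}OS_V^{-1/2})$ recorded above, together with $\mathcal{M}^{(W,V)}(O)=T_W^*OT_V$ and the self-adjointness of $S_W^{-1/2}$, one has for $W=V$ that $\mathcal{M}_{\otimes}^{(W,W)}(O)=T_W^*S_W^{-1/2}\,O\,S_W^{-1/2}T_W=\Theta_W^*\,O\,\Theta_W$, where $\Theta_W:=S_W^{-1/2}T_W\colon\ltiw\to\Hil$. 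Since $T_WT_W^*=S_W$ and $S_W^{-1/2}$ commutes with $S_W$, it follows that $\Theta_W\Theta_W^*=\identity{\Hil}$; hence $\Theta_W$ is a surjective coisometry, $\Theta_W^*$ an isometry (in particular $\|\Theta_W\|=1$), and $P:=\Theta_W^*\Theta_W$ is an orthogonal projection on $\ltiw$.

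Granting this, the monomorphism claim is formal. The map is linear and bounded (indeed contractive, from $\|\Theta_W\|=1$), it is $\ast$-preserving since $(\Theta_W^*O\Theta_W)^*=\Theta_W^*O^*\Theta_W$, and it is multiplicative since $\Theta_W\Theta_W^*=\identity{\Hil}$ gives $\mathcal{M}_{\otimes}^{(W,W)}(O_1)\mathcal{M}_{\otimes}^{(W,W)}(O_2)=\Theta_W^*O_1(\Theta_W\Theta_W^*)O_2\Theta_W=\Theta_W^*(O_1O_2)\Theta_W$. For injectivity, if $\Theta_W^*O\Theta_W=0$ then $O\Theta_W=0$ because $\Theta_W^*$ is injective, and then $O=0$ because $\Theta_W$ is surjective. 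So $\mathcal{M}_{\otimes}^{(W,W)}$ is an injective $\ast$-homomorphism between the C$^*$-algebras $\BL{\Hil}$ and $\BL{\ltiw}$, hence a C$^*$-algebra monomorphism (automatically isometric).

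For the ``isomorphism'' part I would identify the range. Using $\Theta_W\Theta_W^*=\identity{\Hil}$ one gets $\Theta_W^*O\Theta_W=P(\Theta_W^*O\Theta_W)P$, so the range lies in the corner $P\BL{\ltiw}P$; conversely $PNP=\Theta_W^*(\Theta_WN\Theta_W^*)\Theta_W$ for every $N\in\BL{\ltiw}$, so the range equals $P\BL{\ltiw}P$. Thus $\mathcal{M}_{\otimes}^{(W,W)}$ is onto $\BL{\ltiw}$ if and only if $P=\identity{\ltiw}$, that is, if and only if the coisometry $\Theta_W$ is unitary. Since $S_W^{\pm1/2}$ are bounded bijections of $\Hil$, $\Theta_W=S_W^{-1/2}T_W$ is invertible precisely when $T_W$ is bijective, and for the coisometry $\Theta_W$ invertibility coincides with unitarity; and by Proposition~\ref{equi-Riesz}, $T_W$ is bijective exactly when $W$ is a fusion Riesz basis. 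Finally a bijective $\ast$-homomorphism of C$^*$-algebras has bounded inverse, so in that case the monomorphism is a C$^*$-isomorphism; while if $W$ is not a fusion Riesz basis then $P\neq\identity{\ltiw}$ and the map fails to be surjective. This settles both implications.

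I expect the only genuine content to be the two identifications --- that the range is the corner $P\BL{\ltiw}P$, and that $\Theta_W$ unitary is equivalent to $W$ being a fusion Riesz basis; everything else is a routine consequence of the coisometry identity $\Theta_W\Theta_W^*=\identity{\Hil}$. As an alternative to invoking Proposition~\ref{equi-Riesz}, one could instead evaluate $P=T_W^*S_W^{-1}T_W$ on vectors supported at a single index $k$, which shows $P=\identity{\ltiw}$ is equivalent to $S_W^{-1}W_k\perp W_j$ for $j\neq k$ together with $w_k^2\pi_{W_k}S_W^{-1}\pi_{W_k}=\pi_{W_k}$ for all $k$, i.e.\ exactly the characterisation in Proposition~\ref{R}.
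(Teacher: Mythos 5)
Your proof is correct. The homomorphism half is in substance the same as the paper's: the identity you package as ``$\Theta_W=S_W^{-1/2}T_W$ is a coisometry'', i.e. $S_W^{-1/2}T_WT_W^*S_W^{-1/2}=\identity{\Hil}$, is exactly the computation the paper uses for multiplicativity, with the $*$-compatibility coming from \eqref{eq:adjM}--\eqref{eq:adjO}. Where you genuinely diverge is the characterization of when the map is an isomorphism: the paper writes $\mathcal{M}_{\otimes}^{(W,W)}=(T_W^*\otimes T_W^*)\circ(S_W^{-1/2}\otimes S_W^{-1/2})$ and settles injectivity/surjectivity by quoting Corollary \ref{Mlr}, which in turn rests on the tensor-product properties of Subsection \ref{sec:tensop0} (surjective iff $T_W^*$ is onto, i.e. $W$ is a fusion Riesz basis; injective since a fusion frame is complete). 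You instead identify the image exactly as the corner $P\,\BL{\ltiw}\,P$ with $P=\Theta_W^*\Theta_W=T_W^*S_W^{-1}T_W$, so that surjectivity forces $P=\identity{\ltiw}$, i.e. the coisometry $\Theta_W$ is unitary, i.e. $T_W$ is bijective, and then Proposition \ref{equi-Riesz} (or, as you note, Proposition \ref{R} via $w_jw_k\pi_{W_j}S_W^{-1}\pi_{W_k}=\delta_{jk}\pi_{W_k}$) finishes the equivalence with $W$ being a fusion Riesz basis. Your route is more self-contained (it does not invoke the quoted tensor-product surjectivity/injectivity facts), and it yields extra information for free --- the embedding is isometric and its range is precisely the corner determined by $P$ --- whereas the paper's argument is shorter given that Corollary \ref{Mlr} is already available.
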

\begin{proof}
Clearly, for all $O_1,O_2\in \BL{\Hil}$ we have
\begin{eqnarray*}
\mathcal M_{\otimes}^{(W,W)} \left( O_1  O_2 \right) & = & T_W^* S_W^{-1/2} O_1 O_2 S_W^{-1/2} T_W   \\
& = & T_W^* S_W^{-1/2} O_1 S_W^{-1/2} T_W T^*_W S_W^{-1/2} O_2 S_W^{-1/2} T_W  \\
& = & \mathcal M_{\otimes}^{(W,W)} \left( O_1 \right) \mathcal M_{\otimes}^{(W,W)} \left( O_2 \right).
\end{eqnarray*}
Hence, $\mathcal M_{\otimes}^{(W,W)}$ is a $C^*$-algebra homomorphism by using (\ref{eq:adjM}) and (\ref{eq:adjO}).
Furthermore,
 \begin{equation}\label{homo}
 \mathcal{M}_{\otimes}^{(W,W)} \left( \identity{\Hil} \right) = T_W^* S_W^{-1/2} \identity{\Hil} S_W^{-1/2} T_W.
\end{equation}
Hence, The rest follows from Corollary \ref{Mlr}.
\end{proof}
%%%%%%%%%%%%%%%%%%%%%%%%%%%%%%%%%%%%%%%%%%%%%%%%%%%%%%%%%%%%%%%%%%%%%%%%%%%%%%%%%%
\begin{corollary}
Let $W=(W_i,w_i)$ be a fusion frame in $\Hil$, $O\in \BL{\Hil}$ and $M\in \BL{\ltiw}$. The following are equivalent:
\begin{enumerate}
\item $W$ is a fusion Riesz basis.
 \item If $O$ is invertible, then $\mathcal{M}_{\otimes}^{(W,W)}(O)$ is invertible and $\left(\mathcal{M}_{\otimes}^{(W,W)}(O)\right)^{-1}=\mathcal{M}_{\otimes}^{(W,W)}(O^{-1})$.
\item If $M$ is invertible, then $\mathcal{O}_{\otimes}^{(W,W)}(M)$ is invertible and $\left(\mathcal{O}_{\otimes}^{(W,W)}(M)\right)^{-1}=\mathcal{O}_{\otimes}^{(W,W)}(M^{-1})$.
\end{enumerate}
\end{corollary}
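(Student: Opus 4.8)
The plan is to prove the three-way equivalence by chaining the implications $(1)\Rightarrow(2)$, $(2)\Rightarrow(3)$ (via the adjoint/tensor symmetry), and then closing the loop with $(3)\Rightarrow(1)$, leaning almost entirely on the structural identities already established: the factorizations $\mathcal{M}_{\otimes}^{(W,W)} = (T_W^*\otimes T_W^*)\circ(S_W^{-1/2}\otimes S_W^{-1/2})$ and $\mathcal{O}_{\otimes}^{(W,W)} = (S_W^{-1/2}\otimes S_W^{-1/2})\circ(T_W\otimes T_W)$, the multiplicativity of $\mathcal{M}_{\otimes}^{(W,W)}$ from the previous proposition, the reconstruction identity $\mathcal{O}_{\otimes}^{(W,W)}\mathcal{M}_{\otimes}^{(W,W)} = \identity{\BL{\Hil}}$, and Corollary~\ref{Mlr} / Proposition~\ref{equi-Riesz} characterizing fusion Riesz bases via the bijectivity of $T_W$ and $T_W^*$.

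For $(1)\Rightarrow(2)$: if $W$ is a fusion Riesz basis, then by Corollary~\ref{Mlr} both $T_W$ and $T_W^*$ are bijective, hence $S_W^{-1/2}\otimes S_W^{-1/2}$ and $T_W^*\otimes T_W^*$ are bijective, so $\mathcal{M}_{\otimes}^{(W,W)}:\BL{\Hil}\to\BL{\ltiw}$ is a bijective bounded linear map (an isomorphism, by the previous proposition). Multiplicativity then gives, for invertible $O$, that $\mathcal{M}_{\otimes}^{(W,W)}(O)\,\mathcal{M}_{\otimes}^{(W,W)}(O^{-1}) = \mathcal{M}_{\otimes}^{(W,W)}(O O^{-1}) = \mathcal{M}_{\otimes}^{(W,W)}(\identity{\Hil})$, and since $W$ is a fusion Riesz basis the image of the identity is itself the identity of $\BL{\ltiw}$ — this last point is exactly the content of equation~\eqref{homo} together with $T_W^* S_W^{-1} T_W = \identity{\ltiw}$, which holds precisely for fusion Riesz bases (this is where I expect to have to be a little careful: one must verify $T_W^* S_W^{-1/2}\cdot S_W^{-1/2} T_W = \identity{\ltiw}$, equivalently that $\mathcal{M}_{\otimes}^{(W,W)}$ is unital, using $T_W T_W^* = S_W$ and the bijectivity of $T_W$). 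The symmetric product $\mathcal{M}_{\otimes}^{(W,W)}(O^{-1})\,\mathcal{M}_{\otimes}^{(W,W)}(O) = \identity{}$ follows the same way, so $\left(\mathcal{M}_{\otimes}^{(W,W)}(O)\right)^{-1} = \mathcal{M}_{\otimes}^{(W,W)}(O^{-1})$.

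For $(2)\Leftrightarrow(3)$: note that $\mathcal{O}_{\otimes}^{(W,W)}$ and $\mathcal{M}_{\otimes}^{(W,W)}$ are two-sided inverses of each other under the identification given by \eqref{eq:operreconst1} when $W$ is a fusion Riesz basis (both maps bijective, and their composite is the identity), so $M$ invertible $\iff$ $\mathcal{O}_{\otimes}^{(W,W)}(M) = (S_W^{-1/2}\otimes S_W^{-1/2})(T_W\otimes T_W)(M)$ invertible translates back through $\mathcal{M}_{\otimes}^{(W,W)}$; more directly, set $O := \mathcal{O}_{\otimes}^{(W,W)}(M)$ and use $\mathcal{M}_{\otimes}^{(W,W)}(O) = M$ to transport statement (2). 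Alternatively one can use \eqref{eq:adjM}--\eqref{eq:adjO} and argue by adjoints. Finally $(3)\Rightarrow(1)$: suppose (3) holds but $W$ is not a fusion Riesz basis; then by Corollary~\ref{Mlr} at least one of $T_W, T_W^*$ fails to be bijective, so $\mathcal{O}^{(W,W)} = T_W\otimes T_W$ is not bijective on $\BL{\ltiw}$, and pulling back an invertible $M$ (e.g.\ $M=\identity{\ltiw}$, or a $\diag$ of local identities) one sees $\mathcal{O}_{\otimes}^{(W,W)}(M)$ cannot be invertible as it factors through a non-bijective map — contradicting (3). The main obstacle is the unitality bookkeeping in $(1)\Rightarrow(2)$: making sure that "image of the identity is the identity" really holds for fusion Riesz bases and not merely for fusion orthonormal bases, i.e.\ disentangling $S_W^{-1/2}$ from the projections $\pi_{W_i}$ using Proposition~\ref{R}(3); everything else is a formal consequence of the tensor-product properties (1)--(5) of Subsection~\ref{sec:tensop0} and the already-proven homomorphism property.
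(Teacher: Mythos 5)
Your forward direction is fine: for a fusion Riesz basis $T_W$ is bijective, so $T_W^*S_W^{-1}T_W=\identity{\ltiw}$ (the paper's identity \eqref{SRiesz}), hence $\mathcal{M}_{\otimes}^{(W,W)}$ is unital, and multiplicativity from the preceding proposition gives $(1)\Rightarrow(2)$, and the same computation with $Q:=S_W^{-1/2}T_W$ unitary gives $(1)\Rightarrow(3)$. The genuine gap is in closing the loop. Your step $(2)\Rightarrow(3)$ uses the identity $\mathcal{M}_{\otimes}^{(W,W)}\bigl(\mathcal{O}_{\otimes}^{(W,W)}(M)\bigr)=M$, but the paper only proves the opposite composition \eqref{eq:operreconst1}; in fact $\mathcal{M}_{\otimes}^{(W,W)}\mathcal{O}_{\otimes}^{(W,W)}(M)=GMG$ with $G=T_W^*S_W^{-1}T_W$ the orthogonal projection onto $\range{T_W^*}$, and this equals $M$ for all $M$ only when $W$ is already a fusion Riesz basis — exactly what you are trying to prove, so the argument is circular (you even flag "when $W$ is a fusion Riesz basis" yourself). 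Your step $(3)\Rightarrow(1)$ is a non sequitur: the fact that the map $M\mapsto T_WMT_W^*$ is not bijective on $\BL{\ltiw}$ says nothing about whether the individual operator $\mathcal{O}_{\otimes}^{(W,W)}(M)\in\BL{\Hil}$ is invertible. Your proposed test matrix fails concretely: for $M=\identity{\ltiw}$ one has $\mathcal{O}_{\otimes}^{(W,W)}(\identity{\ltiw})=S_W^{-1/2}T_WT_W^*S_W^{-1/2}=\identity{\Hil}$, so statement (3) holds for this $M$ for \emph{every} fusion frame, Riesz or not; no contradiction can be extracted from it.

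Both converses can be repaired, but they need different arguments. For $(2)\Rightarrow(1)$, simply take $O=\identity{\Hil}$: invertibility of $\mathcal{M}_{\otimes}^{(W,W)}(\identity{\Hil})=T_W^*S_W^{-1}T_W$ forces $\kernel{T_W}=\{0\}$, and since $T_W$ is surjective ($W$ a fusion frame) it is bijective, so $W$ is a fusion Riesz basis by Proposition \ref{equi-Riesz}. For $(3)\Rightarrow(1)$ a trivial test matrix does not suffice: writing $Q=S_W^{-1/2}T_W$ (a coisometry with $Q^*Q=G$), one has
\begin{equation*}
\mathcal{O}_{\otimes}^{(W,W)}(M)\,\mathcal{O}_{\otimes}^{(W,W)}(M^{-1})=Q\,M\,G\,M^{-1}Q^*,
\end{equation*}
so if $W$ is not a fusion Riesz basis ($G\neq\identity{\ltiw}$, i.e. $\kernel{T_W}\neq\{0\}$) you must choose an invertible $M$ — for instance a unitary swapping a unit vector of $\range{T_W^*}$ with one of $\kernel{T_W}$ — to make $QMGM^{-1}Q^*\neq\identity{\Hil}$ and thus violate (3). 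Without some such construction the equivalence with (3) is not established.
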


\subsection{Invertibility for Riesz Fusion Sequences}
Similar to the Hilbert frame case \cite{xxlrieck11,xxlriek11} we can show formulas for the inverse and pseudo-inverse of the matrix induced by operators.
They are not just a straightforward generalization, because  of the interesting situation for fusion duals.
In the next, we compute the inverse of operator matrix induced by the operator $O$ with respect to fusion Riesz bases $W$ and $V$. First we give the following proposition.
%%%%%%%%%%%%%%%%%%%%%%%%%%%%%%%%%%%5
 \begin{proposition}
Let $W=(W_i,w_i)$ be a fusion frame in $\Hil$ and $O\in \BL{\Hil}$ an invertible operator. The following are equivalent:
\begin{enumerate}
\item $W$ is a fusion orthonormal basis.
\item $\mathcal{M}^{(W,W)}(\identity{\Hil})=\identity{\ltiw}$.
\item $\mathcal{M}^{(W,W)}(O)$ is invertible and $\left(\mathcal{M}^{(W,W)}(O)\right)^{-1}=\mathcal{M}^{(W,W)}(O^{-1})$.
\end{enumerate}
\end{proposition}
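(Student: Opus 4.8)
The plan is to prove the three-way equivalence by the cycle $(1)\Rightarrow(2)\Rightarrow(3)\Rightarrow(1)$, leaning heavily on the identity $\mathcal{M}^{(W,W)}(O)=T_W^*OT_W$ from \eqref{DefM} together with $S_W=T_WT_W^*$, and on Proposition~\ref{equi-Riesz} which tells us that a fusion orthonormal basis is exactly a Riesz decomposition for which $T_W$ is bijective.

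First I would establish $(1)\Rightarrow(2)$. If $W$ is a fusion orthonormal basis, then (as recalled in the text right after the definition) $S_W=\identity{\Hil}$; equivalently $T_W^*T_W=\identity{\ltiw}$ and $T_WT_W^*=\identity{\Hil}$, so $T_W$ is unitary from $\ltiw$ onto $\Hil$. Hence $\mathcal{M}^{(W,W)}(\identity{\Hil})=T_W^*\identity{\Hil}T_W=T_W^*T_W=\identity{\ltiw}$, which is $(2)$.

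Next, $(2)\Rightarrow(3)$. Assume $\mathcal{M}^{(W,W)}(\identity{\Hil})=T_W^*T_W=\identity{\ltiw}$. This says $T_W$ is an isometry; since $W$ is a fusion frame, $T_W$ is also surjective (its range is all of $\Hil$ because $S_W=T_WT_W^*$ is invertible), so $T_W$ is unitary and $T_WT_W^*=\identity{\Hil}$ as well, i.e. $S_W=\identity{\Hil}$. Now for any invertible $O$ one computes, using \eqref{DefM},
\begin{align*}
\mathcal{M}^{(W,W)}(O)\,\mathcal{M}^{(W,W)}(O^{-1}) &= T_W^*OT_W\,T_W^*O^{-1}T_W = T_W^*O\,S_W\,O^{-1}T_W\\
&= T_W^*OO^{-1}T_W = T_W^*T_W = \identity{\ltiw},
\end{align*}
and symmetrically $\mathcal{M}^{(W,W)}(O^{-1})\,\mathcal{M}^{(W,W)}(O)=\identity{\ltiw}$. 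Hence $\mathcal{M}^{(W,W)}(O)$ is invertible with the claimed inverse, giving $(3)$.

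Finally, $(3)\Rightarrow(1)$. Suppose $\mathcal{M}^{(W,W)}(O)=T_W^*OT_W$ is invertible for the given invertible $O$. Then both $T_W^*O T_W$ and its adjoint are invertible; in particular $T_W^*OT_W$ is surjective, which forces $T_W^*$ to be surjective, and it is injective, which forces $T_W$ to be injective. By Proposition~\ref{equi-Riesz} the surjectivity of $T_W^*$ (equivalently bijectivity of $T_W$) already makes $W$ a Riesz decomposition, so $W$ is a fusion Riesz basis; to upgrade to a \emph{fusion orthonormal basis} I would use the extra hypothesis in $(3)$ that $\left(\mathcal{M}^{(W,W)}(O)\right)^{-1}=\mathcal{M}^{(W,W)}(O^{-1})$. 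Writing this out, $T_W^*O^{-1}T_W = (T_W^*OT_W)^{-1}$; multiplying the relation $\mathcal{M}^{(W,W)}(O)\mathcal{M}^{(W,W)}(O^{-1})=\identity{\ltiw}$ out gives $T_W^*O S_W O^{-1}T_W=T_W^*T_W$, and since $T_W$ is now bijective we may cancel $T_W$ on the right and $T_W^*$ on the left (both have bounded inverses on the appropriate spaces) to obtain $O S_W O^{-1}=\identity{\Hil}$, i.e. $S_W=\identity{\Hil}$. A fusion frame with $S_W=\identity{\Hil}$ is a fusion orthonormal basis (stated in the preliminaries), which is $(1)$.

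The main obstacle is the cancellation step in $(3)\Rightarrow(1)$: one must be careful that $T_W$ and $T_W^*$ are genuinely invertible (not merely injective/dense-range) before cancelling, which is why the argument first extracts that $W$ is a Riesz decomposition via Proposition~\ref{equi-Riesz}, and only then cancels to pin down $S_W=\identity{\Hil}$. A secondary point worth stating cleanly is why injectivity and surjectivity of a product like $T_W^*OT_W$ (with $O$ invertible) transfer to the outer factors — this is immediate since $O$ and $O^{-1}$ are bounded bijections, but it should be noted explicitly.
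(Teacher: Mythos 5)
Your steps $(1)\Rightarrow(2)$ and $(2)\Rightarrow(3)$ are essentially correct and close in spirit to the paper (one small imprecision: in $(1)\Rightarrow(2)$ the word ``equivalently'' overstates things, since $S_W=T_WT_W^*=\identity{\Hil}$ alone does not yield $T_W^*T_W=\identity{\ltiw}$; you need the mutual orthogonality of the $W_i$, which is how the paper argues via Proposition \ref{R}). The genuine gap is in $(3)\Rightarrow(1)$. From $\mathcal{M}^{(W,W)}(O)\,\mathcal{M}^{(W,W)}(O^{-1})=\identity{\ltiw}$ you assert that ``multiplying out gives $T_W^*OS_WO^{-1}T_W=T_W^*T_W$''. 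But the right-hand side of that relation is $\identity{\ltiw}$, not $T_W^*T_W$; replacing $\identity{\ltiw}$ by $T_W^*T_W$ is exactly assuming statement (2), which is what you are trying to reach, so the argument is circular at its key step. With the correct right-hand side, cancelling the (by then invertible) $T_W^*$ and $T_W$ gives $OS_WO^{-1}=(T_W^*)^{-1}(T_W)^{-1}=S_W^{-1}$, i.e. $S_WOS_W=O$, which is precisely where the paper's proof arrives --- and not $OS_WO^{-1}=\identity{\Hil}$.

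Moreover, the gap cannot be closed by repairing the algebra for the single given $O$: from $S_WOS_W=O$ alone one cannot deduce $S_W=\identity{\Hil}$. For instance, in $\CC^2$ take $W_1=\mathrm{span}\{e_1\}$ with weight $\sqrt2$ and $W_2=\mathrm{span}\{e_2\}$ with weight $1/\sqrt2$, so that $W$ is a fusion Riesz basis with $S_W=\diag{2,1/2}$, and let $O$ be the coordinate swap; then $S_WOS_W=O$, $\mathcal{M}^{(W,W)}(O)$ is invertible with inverse $\mathcal{M}^{(W,W)}(O^{-1})$, yet $W$ is not a fusion orthonormal basis. The paper finishes this direction differently: having reached $S_WOS_W=O$, it substitutes $O:=S_W^{-1}$ and uses positivity of $S_W$ to conclude $S_W=\identity{\Hil}$ --- that is, it effectively uses (3) for the particular operator $S_W^{-1}$ rather than only for the fixed $O$. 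Any correct write-up of $(3)\Rightarrow(1)$ has to make this universal (or suitably special) choice of $O$ explicit; your cancellation step as written does not, and the identity it relies on is false in general.
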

\begin{proof}
$(1 \Rightarrow 2)$
 Let $W$ be a fusion orthonormal basis, then $W_{i}\perp W_{j}$, for all $i\neq j$ by Proposition \ref{R}. Hence,
\begin{eqnarray*}
\mathcal{M}^{(W,W)}(\identity{\Hil})=T_{W}^{*}T_{W}=\identity{\ltiw}.
\end{eqnarray*}
$(2 \Rightarrow 1)$
Conversely, suppose that $\mathcal{M}^{(W,W)}(\identity{\Hil})=\identity{\ltiw}$. Then
\begin{eqnarray*}
S_{W}^{2}&=&T_{W}T_{W}^{*}T_{W}T_{W}^{*}\\
&=&T_{W}\mathcal{M}^{(W,W)}(\identity{\Hil})T_{W}^{*}\\
&=&T_{W}T_{W}^{*}=S_{W}.
\end{eqnarray*}
The invertibility of $S_{W}$ implies that $S_{W}=\identity{\Hil}$ and so $W$ is a fusion orthonormal basis.

$(1 \Rightarrow 3)$
Let $W$ be a fusion orthonormal basis. For an orthonormal basis $W$ we obtain
 \begin{eqnarray*}
 \mathcal{M}^{(W,W)}(O^{-1})\mathcal{M}^{(W,W)}(O)&=&T_{W}^*O^{-1}T_{W}T_{W}^*OT_{W}\\
 &=&T_{W}^{*}O^{-1}S_{W}OT_{W}\\
 &=&\mathcal{M}^{(W,W)}(\identity{\Hil})=\identity{\ltiw}.
  \end{eqnarray*}
Similarly, $ \mathcal{M}^{(W,W)}(O^{-1})$ is a right inverse of $ \mathcal{M}^{(W,W)}(O)$.

$(3 \Rightarrow 1)$
Conversely, if $\mathcal{M}^{(W,W)}(O)$ is invertible, then $W$ is a fusion Riesz basis by Theorem 4.4. Also, if
$\left(\mathcal{M}^{(W,W)}(O)\right)^{-1}=\mathcal{M}^{(W,W)}(O^{-1}),$ then
\begin{eqnarray*}
S_{W}^{-1}&=&\left(T_{W}^*\right)^{-1}\identity{\ltiw}\left(T_{W}\right)^{-1}\\
&=& \left(T_{W}^*\right)^{-1}\mathcal{M}^{(W,W)}(O^{-1})\mathcal{M}^{(W,W)}(O)\left(T_{W}\right)^{-1}\\
&=&O^{-1}T_{W}T_{W}^*O=O^{-1}S_{W}O.
\end{eqnarray*}
The above computations show that $S_{W}OS_{W}=O$. Putting $O:=S_{W}^{-1}$ we obtain $S_{W}=\identity{\Hil}$  since $S_{W}$ being positive. Hence,  $W$ is a fusion orthonormal basis which is equivalent to $\mathcal{M}^{(W,W)}(\identity{\Hil})=\identity{\ltiw}$.

\end{proof}
 %%%%%%%%%%%%%%%%%%%%%%%%%%%%%%%%%%%%%%%%%%%%%%%%%%%%%%%%
The next result states the invertibility  of some matrix operators when the operator $O\in \BL{\Hil}$ is not necessary invertible.
Those results might seem trivial, but note that $S_{W}^{-1} \pi_{W_i} \neq \pi_{W_i}S_{W}^{-1}$, which is the reason for many interesting properties of fusion duals see also (\ref{sec:reconstr1}).
%-----------------------------------------------------
\begin{theorem}\label{6}
Suppose that $W=(W_i,w_i)$ is a fusion frame in $\Hil$ and $O\in \BL{\Hil}$. Then
\begin{enumerate}
\item $\mathcal{M}^{(W,\widetilde{W})}(O)$ is invertible if and only if $\mathcal{M}^{(W,W)}(OS_{W}^{-1})$ is invertible.
\item $\mathcal{M}^{(\widetilde{W},W)}(O)$ is invertible if and only if $\mathcal{M}^{(W,W)}(S_{W}^{-1}O)$ is invertible.
\item  $\mathcal{M}^{(\widetilde{W},\widetilde{W})}(O)$ is invertible if and only if $\mathcal{M}^{(W,W)}(S_{W}^{-1}OS_{W}^{-1})$ is invertible.
\item  $\mathcal{M}_{\otimes}^{(W,\widetilde{W})}(O)$ is invertible if and only if $\mathcal{M}_{\otimes}^{(W,W)}(OS_{\widetilde{W}}^{-1/2}S_W^{-1/2})$ is invertible.
\item  $\mathcal{M}_{\otimes}^{(\widetilde{W},W)}(O)$ is invertible if and only if $\mathcal{M}_{\otimes}^{(W,W)}(S_{W}^{-1/2}S_{\widetilde{W}}^{-1/2}O)$ is invertible.
\item  $\mathcal{M}_{\otimes}^{(\widetilde{W},\widetilde{W})}(O)$ is invertible if and only if $\mathcal{M}_{\otimes}^{(W,W)}(S_{W}^{-1/2}S_{\widetilde{W}}^{-1/2}OS_{\widetilde{W}}^{-1}S_W^{-1/2})$ is invertible.
\end{enumerate}
\end{theorem}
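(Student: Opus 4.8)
The strategy is to reduce all six equivalences to a single structural identity: the one linking the synthesis operator of the canonical dual $\widetilde W=\{(S_W^{-1}W_i,\omega_i)\}_{i\in I}$ to that of $W$. An entry‑wise attack is hopeless here for exactly the reason noted before the statement: $\pi_{S_W^{-1}W_i}$ equals neither $S_W^{-1}\pi_{W_i}$ nor $\pi_{W_i}S_W^{-1}$, so $\pi_{W_j}O\pi_{S_W^{-1}W_i}$ cannot be massaged into $\pi_{W_j}(\cdot)\pi_{W_i}$ directly. Instead I would work throughout with the operator form $\mathcal M^{(X,Y)}(O)=T_X^{*}OT_Y$ of \eqref{DefM}, and $\mathcal M_{\otimes}^{(X,Y)}(O)=T_X^{*}S_X^{-1/2}OS_Y^{-1/2}T_Y$, in which the subspaces enter only through the synthesis operators.

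\emph{Step 1 (the key lemma).} Define
$$\Psi:\ \sum_{i\in I}\bigoplus W_i\ \longrightarrow\ \sum_{i\in I}\bigoplus S_W^{-1}W_i,\qquad \Psi(\{f_i\}_{i\in I})=\{S_W^{-1}f_i\}_{i\in I}.$$
I would check that $\Psi$ is a well‑defined topological isomorphism of coefficient spaces with inverse $\{g_i\}\mapsto\{S_Wg_i\}$: since $f_i\in W_i$ we have $S_W^{-1}f_i\in S_W^{-1}W_i$ with no projection applied — this is the whole point — and $\sum\|S_W^{-1}f_i\|^{2}\le\|S_W^{-1}\|^{2}\sum\|f_i\|^{2}$, so $\Psi$ is bounded; boundedness of its inverse is the same estimate with $S_W$ in place of $S_W^{-1}$. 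A one‑line computation then yields $T_{\widetilde W}\Psi=S_W^{-1}T_W$, hence
$$T_{\widetilde W}=S_W^{-1}T_W\Psi^{-1},\qquad T_{\widetilde W}^{*}=(\Psi^{*})^{-1}T_W^{*}S_W^{-1},$$
the latter by taking adjoints and using $(S_W^{-1})^{*}=S_W^{-1}$ and $(\Psi^{-1})^{*}=(\Psi^{*})^{-1}$.

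\emph{Step 2 (substitution).} Inserting the displayed expressions for $T_{\widetilde W}$ and $T_{\widetilde W}^{*}$ into $\mathcal M^{(W,\widetilde W)}(O)=T_W^{*}OT_{\widetilde W}$, $\mathcal M^{(\widetilde W,W)}(O)=T_{\widetilde W}^{*}OT_W$ and $\mathcal M^{(\widetilde W,\widetilde W)}(O)=T_{\widetilde W}^{*}OT_{\widetilde W}$ produces, respectively,
$$\mathcal M^{(W,W)}(OS_W^{-1})\,\Psi^{-1},\qquad (\Psi^{*})^{-1}\,\mathcal M^{(W,W)}(S_W^{-1}O),\qquad (\Psi^{*})^{-1}\,\mathcal M^{(W,W)}(S_W^{-1}OS_W^{-1})\,\Psi^{-1}.$$
For the $\otimes$‑versions I would do the same and then absorb the factor $S_W^{-1}=S_W^{-1/2}S_W^{-1/2}$ coming from $T_{\widetilde W}$ (and its adjoint) into the argument, so as to match the shape $T_W^{*}S_W^{-1/2}(\,\cdot\,)S_W^{-1/2}T_W$; for instance
$$\mathcal M_{\otimes}^{(\widetilde W,\widetilde W)}(O)=(\Psi^{*})^{-1}T_W^{*}S_W^{-1}S_{\widetilde W}^{-1/2}OS_{\widetilde W}^{-1/2}S_W^{-1}T_W\Psi^{-1}=(\Psi^{*})^{-1}\,\mathcal M_{\otimes}^{(W,W)}\!\big(S_W^{-1/2}S_{\widetilde W}^{-1/2}OS_{\widetilde W}^{-1/2}S_W^{-1/2}\big)\,\Psi^{-1},$$
and (4), (5) are the one‑sided analogues. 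In every case we have thus written $\mathcal M^{(\cdots)}(O)=L\,\mathcal M^{(W,W)}(\widetilde O)\,R$ with $L$ and $R$ invertible operators on the relevant coefficient spaces (one of them possibly the identity), whence $\mathcal M^{(\cdots)}(O)$ is invertible if and only if $\mathcal M^{(W,W)}(\widetilde O)$ is.

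The main obstacle is Step 1: establishing that $\Psi$ genuinely is a bounded bijection with bounded inverse between the two direct‑sum Hilbert spaces and that the intertwining $T_{\widetilde W}\Psi=S_W^{-1}T_W$ holds exactly. After that the proof is pure bookkeeping — substitution, keeping the self‑adjoint square roots $S_W^{-1/2}$ and $S_{\widetilde W}^{-1/2}$ on the correct sides when unwinding the tensor notation, and the elementary observation that composing with an invertible operator preserves invertibility. I anticipate no real difficulty beyond carrying out these routine manipulations carefully.
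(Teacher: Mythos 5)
Your proof is essentially the paper's own argument: your coefficient-space isomorphism $\Psi$ is precisely the inverse of the paper's operator $\oplus S_W$, your key identity $T_{\widetilde W}=S_W^{-1}T_W\Psi^{-1}$ is exactly the paper's relation $T_{\widetilde W}=S_W^{-1}T_W(\oplus S_W)$, and the conclusion by substituting into $\mathcal{M}^{(X,Y)}(O)=T_X^{*}OT_Y$ and invoking invertibility of the coefficient-space isomorphism is the same (you merely write out cases (3) and (6), which the paper dismisses as ``similar''). The only divergence is in item (6), where your computation yields the argument $S_W^{-1/2}S_{\widetilde W}^{-1/2}OS_{\widetilde W}^{-1/2}S_W^{-1/2}$ rather than the $S_{\widetilde W}^{-1}$ appearing in the printed statement; since the paper gives no proof of this case and the pattern of (4)--(5) agrees with your exponent, the printed $S_{\widetilde W}^{-1}$ appears to be a typo in the theorem rather than a gap in your argument.
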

\begin{proof}
1. First, we show that
\begin{eqnarray}\label{si}
T_{\widetilde{W}}=S_{W}^{-1}T_{W}(\oplus S_W),
\end{eqnarray}
where $\oplus S_W:\sum \limits_{i\in I} \bigoplus\widetilde{W}_{i}\rightarrow \ltiw$ is given by
\begin{eqnarray}\label{funcsi}
\oplus S_W (\mathbf{g})=\left\{S_{W}g_{i}\right\}_{i\in I},\qquad \left(\mathbf{g}=\{g_{i}\}_{i\in I}\in \sum \limits_{i\in I} \bigoplus\widetilde{W}_{i}\right).
\end{eqnarray}
More precisely, by (\ref{funcsi}) we obtain for $\{f_i\}_{i\in I} \in \sum_{i\in I}{\oplus W_i}$,
\begin{eqnarray*}
T_{\widetilde{W}}\mathbf{g}&=&T_{\widetilde{W}}\left\{S_{W}^{-1}f_i\right\}_{i\in I}\\
&=&S_{W}^{-1}\sum \limits_{i\in I}\pi_{Wi}f_{i}\\
&=&S_{W}^{-1}T_{W}\left\{S_{W}g_{i}\right\}_{i\in I}
=S_{W}^{-1}T_{W}(\oplus S_W) \mathbf{g},
\end{eqnarray*}
Furthermore, applying (\ref{si}) we have
\begin{eqnarray*}
\mathcal{M}^{(W,\widetilde{W})}(O)&=&T_{W}^{*}OT_{\widetilde{W}}\\
&=&T_{W}^{*}OS_{W}^{-1}T_{W}(\oplus S_W)=\mathcal{M}^{(W,W)}(OS_{W}^{-1})(\oplus S_W).
\end{eqnarray*}
So, (1)  follows immediately  by the invertibility of $\oplus S_W$.

2. To obtain the second part, note that
\begin{eqnarray*}
\mathcal{M}^{(\widetilde{W},W)}(O)&=&T_{\widetilde{W}}^{*}OT_{W}\\
&=&(\oplus S_W)^{*}T_{W}^{*}S_{W}^{-1}OT_{W}=(\oplus S_W)^{*}\mathcal{M}^{(W,W)}(S_{W}^{-1}O).
\end{eqnarray*}
Moreover, $(\oplus S_W)^{*}:\ltiw\rightarrow \sum \limits_{i\in I} \bigoplus\widetilde{W}_{i}$ is invertible, and hence (3) is  easily proved.

%{\xxb 4. It follows from the fact that $\mathcal{M}_{\otimes}^{(W,V)}\identity{\Hil}=T_W^*S_W^{-1}T_W$.

4. Using \eqref{funcsi} and the fact that
\begin{eqnarray*}
\mathcal{M}_{\otimes}^{(W,\widetilde{W})} O&=&\mathcal{M}^{(W,\widetilde{W})} \left(S_{W}^{-1/2}\otimes S_{\widetilde{W}}^{-1/2}\right) O\\
&=& T_W^*S_W^{-1/2}OS_{\w}^{-1/2}T_{\w}\\
&=& T_W^*S_W^{-1/2}OS_{\w}^{-1/2}S_W^{-1}T_W(\oplus S_W)
= \mathcal{M}_{\otimes}^{(W,W)}\left(OS_{\w}^{-1/2}S_W^{-1/2}\right)(\oplus S_W)
\end{eqnarray*}
follow  the result.

5. Applying the identity $T_{\w}=S_{W}^{-1}T_{W}(\oplus S_W) $, proven in (1), we obtain
\begin{eqnarray*}
\mathcal{M}_{\otimes}^{(\w,W)}O&=&\mathcal{M}^{(\w,W)}\left(S_{\w}^{-1/2}\otimes S_W^{-1/2}\right)O\\
&=&T_{\w}^*S_{\w}^{-1/2}OS_W^{-1/2}T_W\\
&=&(\oplus S_W)^*T_{W}^*S_W^{-1}S_{\w}^{-1/2}OS_W^{-1/2}T_W
=(\oplus S_W)^*\mathcal{M}_{\otimes}^{(W,W)}\left(S_{W}^{-1/2}S_{\w}^{-1/2}O\right).
\end{eqnarray*}
It immediately implies the result by the invertibility of $\oplus S_W$. The proof of (3) and (6) are similar.
\end{proof}
%%%%%%%%%%%%%%%%%%%%%%%%%%%%%%%%%%%%%%%%%%%%%%%%%%%%%%%%%%%%%%
We are ready now to state the main result of this section.

\begin{theorem}\label{inv-riesz}
Let $W=(W_i,w_i)$, $V=(V_i,v_i)$ be fusion frames in $\Hil$. Then the following are equivalent:
\begin{enumerate}
\item  $W$ and $V$ are fusion Riesz bases.
\item  $\mathcal{M}^{(W,V)}$ is onto.
\item  $\mathcal{M}^{(W,V)}$ is invertible and
$
\left(\mathcal{M}^{(W,V)}\right)
^{-1}=\left(S_W^{-1}\otimes S_V^{-1}\right)\mathcal{O}^{(W,V)}.
$
\item  There exists an invertible operator $O\in \BL{\Hil}$ such that $\mathcal{M}^{(W,V)}(O)$ is invertible  and
\begin{equation*}
\left(\mathcal{M}^{(W,V)}(O)\right)^{-1}=\mathcal{M}^{(V,W)}(S_V^{-1}\otimes S_W^{-1})(O^{-1}).
\end{equation*}
\item  $\mathcal{M}_{\otimes}^{(W,V)}$ is onto.
\item  $\mathcal{M}_{\otimes}^{(W,V)}$ is invertible and $
\left(\mathcal{M}_{\otimes}^{(W,V)}\right)^{-1}= \mathcal{O}_{\otimes}^{(W,V)}.$
\item There exists an invertible operator $O\in \BL{\Hil}$ such that $\mathcal{M}_{\otimes}^{(W,V)}(O)$ is invertible and
\begin{equation*}
\left(\mathcal{M}_{\otimes}^{(W,V)}(O)\right)^{-1}=\mathcal{M}_{\otimes}^{(V,W)}(O^{-1}).
\end{equation*}
%\item[(3)] $\mathcal{O}^{(W,V)}$ is a right inverse of $\mathcal{M}^{(W,V)}$ if and only if $W$ and $V$ are fusion orthonormal bases.
\end{enumerate}
\end{theorem}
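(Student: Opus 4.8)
The plan is to show that each of conditions (2)--(7) is equivalent to (1), that $W$ and $V$ are both fusion Riesz bases. The tools are Corollary~\ref{Mlr} (which already contains $(1)\Leftrightarrow(2)$), Proposition~\ref{equi-Riesz} (a fusion frame is a fusion Riesz basis precisely when $T_W$, equivalently $T_W^*$, is bijective), the elementary tensor-of-operators facts (3) and (5) of Subsection~\ref{sec:tensop0}, the adjoint rules \eqref{eq:adjM}--\eqref{eq:adjO}, and the reconstruction identity \eqref{eq:operreconst1}. Concretely I would establish the links $(1)\Leftrightarrow(2)$, $(1)\Rightarrow(3)\Rightarrow(2)$, $(1)\Rightarrow(4)\Rightarrow(1)$, $(1)\Leftrightarrow(5)$, $(5)\Rightarrow(6)\Rightarrow(5)$ and $(1)\Rightarrow(7)\Rightarrow(1)$, which together close the equivalence around (1).

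For the cheap links: $(1)\Leftrightarrow(2)$ is Corollary~\ref{Mlr}(1); $(3)\Rightarrow(2)$ and $(6)\Rightarrow(5)$ hold since invertible operators are onto. Since $S_W^{-1/2}$ and $S_V^{-1/2}$ are invertible, property (3) of Subsection~\ref{sec:tensop0} makes $S_W^{-1/2}\otimes S_V^{-1/2}$ invertible, and as $\mathcal{M}_{\otimes}^{(W,V)}=\mathcal{M}^{(W,V)}\circ(S_W^{-1/2}\otimes S_V^{-1/2})$ we get $(1)\Leftrightarrow(5)$ from $(1)\Leftrightarrow(2)$. For $(5)\Rightarrow(6)$: \eqref{eq:operreconst1} gives $\mathcal{O}_{\otimes}^{(W,V)}\mathcal{M}_{\otimes}^{(W,V)}=\identity{\BL{\Hil}}$, so $\mathcal{M}_{\otimes}^{(W,V)}$ is injective; with (5) it is bijective, hence boundedly invertible, and right-multiplying that identity by $(\mathcal{M}_{\otimes}^{(W,V)})^{-1}$ yields $\mathcal{O}_{\otimes}^{(W,V)}=(\mathcal{M}_{\otimes}^{(W,V)})^{-1}$. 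For $(4)\Rightarrow(1)$ and $(7)\Rightarrow(1)$: write $\mathcal{M}^{(W,V)}(O)=T_W^{*}OT_V$, resp.\ $\mathcal{M}_{\otimes}^{(W,V)}(O)=T_W^{*}S_W^{-1/2}OS_V^{-1/2}T_V$; if this is invertible it is in particular surjective, its range lies in $\range{T_W^*}$, so $\range{T_W^*}=\ltiw$ and $W$ is a fusion Riesz basis by Proposition~\ref{equi-Riesz}; applying \eqref{eq:adjM} to pass to the adjoint and repeating the argument on the $V$-side gives the same for $V$.

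The forward implications $(1)\Rightarrow(3),(4),(6),(7)$ carry the computation. Assuming (1), Proposition~\ref{equi-Riesz} makes $T_W,T_V,T_W^*,T_V^*,S_W^{\pm1/2},S_V^{\pm1/2}$ all bijective, so $\mathcal{M}^{(W,V)}=T_W^*\otimes T_V^*$ and $\mathcal{M}_{\otimes}^{(W,V)}$ are invertible by property (3) of Subsection~\ref{sec:tensop0}. I would record once the identities $S_W^{-1}T_W=(T_W^*)^{-1}$, $T_W^*S_W^{-1}=T_W^{-1}$, $T_W^*S_W^{-1/2}=T_W^{-1}S_W^{1/2}$, $S_W^{-1/2}T_W=S_W^{1/2}(T_W^*)^{-1}$ (each following at once from $S_W=T_WT_W^*$ and bijectivity of $T_W$), together with their analogues for $V$. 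Then, using property (5) of Subsection~\ref{sec:tensop0}, $(S_W^{-1}\otimes S_V^{-1})\mathcal{O}^{(W,V)}=(S_W^{-1}T_W)\otimes(S_V^{-1}T_V)=(T_W^*)^{-1}\otimes(T_V^*)^{-1}=(\mathcal{M}^{(W,V)})^{-1}$, which is (3); and $(\mathcal{M}_{\otimes}^{(W,V)})^{-1}=(S_W^{1/2}\otimes S_V^{1/2})(S_W^{-1}\otimes S_V^{-1})\mathcal{O}^{(W,V)}=(S_W^{-1/2}\otimes S_V^{-1/2})\mathcal{O}^{(W,V)}=\mathcal{O}_{\otimes}^{(W,V)}$, which is (6). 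For (4) and (7) I would take $O=\identity{\Hil}$: then $\mathcal{M}^{(W,V)}(\identity{\Hil})=T_W^*T_V$ is invertible with inverse $T_V^{-1}(T_W^*)^{-1}=T_V^*S_V^{-1}S_W^{-1}T_W=\mathcal{M}^{(V,W)}\bigl((S_V^{-1}\otimes S_W^{-1})(\identity{\Hil})\bigr)$, and likewise $\mathcal{M}_{\otimes}^{(W,V)}(\identity{\Hil})=T_W^*S_W^{-1/2}S_V^{-1/2}T_V$ has inverse $T_V^{-1}S_V^{1/2}S_W^{1/2}(T_W^*)^{-1}=T_V^*S_V^{-1/2}S_W^{-1/2}T_W=\mathcal{M}_{\otimes}^{(V,W)}(\identity{\Hil})$.

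I do not expect a conceptual obstacle; the friction is purely bookkeeping. The points to watch are: keeping the domains and codomains of $T_W,T_V,T_W^*,T_V^*$ consistent among $\Hil$, $\ltiv$ and $\ltiw$; applying \eqref{eq:adjM}--\eqref{eq:adjO} and the composition rule (5) of Subsection~\ref{sec:tensop0} in the correct order when rearranging the tensor expressions; and justifying the half-power identities $T_W^*S_W^{-1/2}=T_W^{-1}S_W^{1/2}$ and $S_W^{-1/2}T_W=S_W^{1/2}(T_W^*)^{-1}$, which are legitimate exactly because $T_W$ bijective turns $T_WT_W^*=S_W$ into cancellable factors. With these in hand the explicit inverse formulas in (3), (4), (6), (7) drop out directly.
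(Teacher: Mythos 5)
Your proposal is correct and follows essentially the same route as the paper: everything is anchored to condition (1) via Corollary \ref{Mlr} and Proposition \ref{equi-Riesz}, the explicit inverses come from the identities $S_W^{-1}T_W=(T_W^*)^{-1}$, $T_V^*S_V^{-1}=T_V^{-1}$ (the paper packages this as $T_W^*S_W^{-1}T_W=\identity{\ltiw}$), and the backward directions for (4) and (7) use the same surjectivity/injectivity argument for $T_W^*$ and $T_V$, only phrased via adjoints instead of left/right inverses. The only substantive deviation is that for $(1)\Rightarrow(4)$ and $(1)\Rightarrow(7)$ you verify the inverse formula just for the single instance $O=\identity{\Hil}$ — which is legitimate since those items are existential — whereas the paper establishes the formula $\left(\mathcal{M}^{(W,V)}(O)\right)^{-1}=\mathcal{M}^{(V,W)}(S_V^{-1}O^{-1}S_W^{-1})$ for every invertible $O$.
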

%%%%%%%%%%%%%%%%%%%%%%%%%%%%%%%%%%%%%%%%%%%%%%%%%%%%%%%%%%%%%%%%%%%%%%%%%%%%%%%%%%
 \begin{proof}
$(1\Leftrightarrow2\Leftrightarrow 3)$  follows from Corollary \ref{Mlr}.
The formula for the inverse is easy to show.

$ (3 \Rightarrow 4)$
Assume that $\mathcal{M}^{(W,V)}$ is invertible. So, by Corollary \ref{Mlr}, $W$ and $V$ are fusion Riesz bases. Moreover, using Proposition \ref{equi-Riesz} for a fusion Riesz basis $W$ easily follows that
\begin{eqnarray}\label{SRiesz}
T_{W}^{*}S_{W}^{-1}T_{W}=\identity{\ltiw}.
\end{eqnarray}
So,
\begin{eqnarray*}
\mathcal{M}^{(W,V)}(O)\mathcal{M}^{(V,W)}(S_V^{-1}O^{-1}S_{W}^{-1})&=&T_{W}^{*}OT_{V}T_{V}^{*}S_V^{-1}
O^{-1}S_{W}^{-1}T_{W}\\
&=&T_{W}^{*}S_{W}^{-1}T_{W}=\identity{\ltiw}.
\end{eqnarray*}
Moreover,
\begin{eqnarray*}
\mathcal{M}^{(V,W)}(S_V^{-1}O^{-1}S_{W}^{-1})\mathcal{M}^{(W,V)}(O)&=&T_{V}^{*}S_V^{-1}
O^{-1}S_{W}^{-1}T_{W}T_{W}^{*}OT_{V}\\
&=&T_{V}^{*}S_V^{-1}T_{V}
=\identity{\ltiv}.
\end{eqnarray*}
$ (4 \Rightarrow 3)$
    For the reverse, suppose  that $M^{-1}\in \BL{\ltiw,\ltiv}$ is the inverse of $\mathcal{M}^{(W,V)}(O)$. Then
\begin{eqnarray*}
\identity{\ltiv}=M^{-1}\mathcal{M}^{(W,V)}O=M^{-1}T_W^*OT_V
\end{eqnarray*}
and
$$\identity{\ltiw}=\mathcal{M}^{(W,V)}OM^{-1}=T_W^*OT_VM^{-1}$$
follow that $T_V$ and $T_W^*$ are injective and surjective, respectively, and so $W$ and $V$ are fusion Riesz bases by Proposition \ref{equi-Riesz}. The rest is clear by Theorem \ref{inv-riesz}.

 $(5) \Leftrightarrow (6) \Leftrightarrow (7)$     are similar.

$(2)\Leftrightarrow(5), (3)\Leftrightarrow(6)$ by the definition of $\mathcal{M}$ and $\mathcal{M}_\otimes$.

%$(1 \Rightarrow 5)$ are straightforward by Cor. \ref{Mlr}.
\end{proof}
%%%%%%%%%%%%%%%%%%%%%%%%%%%%%
The interesting aspect of the last result was, that from the invertibility for only one instance, the invertibility of the whole operator can be deduced.

%---------------------------------

\subsection{Pseudo-Inverse}
%----------------------------------------------------------------------------
Suppose that $U:\Hil_1\rightarrow\Hil_2$ is a bounded linear
operator with closed range $\mathcal{R}(U)$. Then there exists a unique
bounded linear operator $U^{\dag}:\Hil_2\rightarrow \Hil_{1}$ \cite{ch08}
satisfying \begin{eqnarray*} \range{U^{\dag}}=\range{U^*},\qquad
 \kernel{U^{\dag}}=\kernel{U^*} ,\qquad UU^{\dag}U=U.
\end{eqnarray*} The operator $U^{\dag}$ is called the \textit{pseudo-inverse operator} of $U$.
If $U$ has closed range, then $U^*$ and $UU^*$ have
closed range and $\left(U^*\right)^{\dag}=\left(U^{\dag}\right)^*$ and
$\left(UU^*\right)^{\dag}=\left(U^*\right)^{\dag}U^{\dag}$.

Our goal of this subsection is to obtain the pseudo-inverse of the operators induced in the third section.  Here, we assume that $\mathcal{M}^{(W,V)}(O)$ and $\mathcal{M}_{\otimes}^{(W,V)}(O)$ have closed range, which is true, for example if $O$ is onto. The advantage of $\mathcal{M}_{\otimes}^{(W,V)}(O)$ over $\mathcal{M}^{(W,V)}(O)$ is that its (pseudo-inverse) representation is obtained without any condition.

\begin{theorem} Let $W$ and $V$ be fusion frames,
$O\in \BL{\Hil}$ and $M\in \BL{\ltiv,\ltiw}$ have closed range. Then the
following assertions hold.
\begin{enumerate}
\item If $\mathcal{M}^{(W,V)}(O)$ has  closed range, then $\left(\mathcal{M}^{(W,V)}(O)\right)^{\dag}=\mathcal{M}^{(V,W)}(S_V^{-1}O^{\dag}S_W^{-1})$   if and only if $S_V\range{O^*}=\range{O^*}$ and $S_W\range{O}=\range{O}$.
 \item  If $\mathcal{M}_{\otimes}^{(W,V)}(O)$ has  closed range, then $\left(\mathcal{M}_{\otimes}^{(W,V)}(O)\right)^{\dag}=\mathcal{M}_{\otimes}^{(V,W)}(O^{\dag})$.
%The operator $\mathcal{O}^{(W,V)}(M)$ also has closed range and $\left(\mathcal{O}^{(W,V)}(M)\right)^{\dag}=S_{W}\mathcal{O}^{(V,W)}(M^{\dag})S_V$.
\end{enumerate}
\end{theorem}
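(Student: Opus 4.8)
The strategy is to verify directly the four Moore--Penrose equations for the candidate operators, exploiting only the two ``collapsing'' identities $T_WT_W^{*}=S_W$, $T_VT_V^{*}=S_V$, the identities $OO^{\dag}O=O$ and $O^{\dag}OO^{\dag}=O^{\dag}$, and the fact that $OO^{\dag}$ and $O^{\dag}O$ are the orthogonal projections onto $\range{O}$ and $\range{O^{*}}$ respectively. Throughout I use that, for a fusion frame $W$, the synthesis operator $T_{W}$ is surjective and $T_{W}^{*}$ injective, since $S_{W}=T_WT_W^{*}$ is invertible; consequently $T_{W}^{*}CT_{W}=0$ forces $C=0$ for every $C\in\BL{\Hil}$ (first $T_W^*$ injective kills the outer analysis operator, then $T_W$ surjective kills the synthesis operator). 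The pseudo-inverses in question exist by the closed-range hypotheses.

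For assertion (1), write $P=\mathcal{M}^{(W,V)}(O)=T_W^{*}OT_V$ and $Q=\mathcal{M}^{(V,W)}(S_V^{-1}O^{\dag}S_W^{-1})=T_V^{*}S_V^{-1}O^{\dag}S_W^{-1}T_W$, which is bounded. Inserting $T_VT_V^{*}=S_V$ and $T_WT_W^{*}=S_W$ collapses the products to $PQP=T_W^{*}\,OO^{\dag}O\,T_V=P$ and, symmetrically, $QPQ=Q$, so $Q$ is always an inner inverse of $P$. The same collapsing gives $PQ=T_W^{*}\,OO^{\dag}S_W^{-1}\,T_W$ and $QP=T_V^{*}\,S_V^{-1}O^{\dag}O\,T_V$. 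Taking adjoints (all of $S_W^{-1},S_V^{-1},OO^{\dag},O^{\dag}O$ being self-adjoint) and using the kernel fact above, $(PQ)^{*}=PQ$ is equivalent to $S_W^{-1}$ commuting with $OO^{\dag}$, i.e. to $S_W$ commuting with the orthogonal projection onto $\range{O}$; since $S_W$ is positive and invertible this is in turn equivalent to $S_W\range{O}=\range{O}$. Likewise $(QP)^{*}=QP$ is equivalent to $S_V\range{O^{*}}=\range{O^{*}}$. Hence, if both range conditions hold, $Q$ satisfies all four Moore--Penrose equations and $Q=P^{\dag}$; conversely, if $Q=P^{\dag}$, then $PQ=PP^{\dag}$ and $QP=P^{\dag}P$ are orthogonal projections, so self-adjoint, and the same equivalences return the two conditions.

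For assertion (2), put $P_{\otimes}=\mathcal{M}_{\otimes}^{(W,V)}(O)=T_W^{*}S_W^{-1/2}OS_V^{-1/2}T_V$ and $Q_{\otimes}=\mathcal{M}_{\otimes}^{(V,W)}(O^{\dag})=T_V^{*}S_V^{-1/2}O^{\dag}S_W^{-1/2}T_W$. Exactly as before, $T_VT_V^{*}=S_V$ and $T_WT_W^{*}=S_W$ give $P_{\otimes}Q_{\otimes}P_{\otimes}=T_W^{*}S_W^{-1/2}\,OO^{\dag}O\,S_V^{-1/2}T_V=P_{\otimes}$ and $Q_{\otimes}P_{\otimes}Q_{\otimes}=Q_{\otimes}$. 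Now the crucial point is that the middle factors are sandwiched symmetrically: $P_{\otimes}Q_{\otimes}=T_W^{*}\,S_W^{-1/2}OO^{\dag}S_W^{-1/2}\,T_W$ and $Q_{\otimes}P_{\otimes}=T_V^{*}\,S_V^{-1/2}O^{\dag}O\,S_V^{-1/2}\,T_V$, and since $S_W^{-1/2}$, $S_V^{-1/2}$, $OO^{\dag}$, $O^{\dag}O$ are all self-adjoint, $P_{\otimes}Q_{\otimes}$ and $Q_{\otimes}P_{\otimes}$ are self-adjoint with no extra hypothesis. Thus all four Moore--Penrose equations hold and $\big(\mathcal{M}_{\otimes}^{(W,V)}(O)\big)^{\dag}=Q_{\otimes}=\mathcal{M}_{\otimes}^{(V,W)}(O^{\dag})$.

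The only genuinely delicate steps are the implication ``$T_W^{*}CT_W=0\Rightarrow C=0$'' (where surjectivity of $T_W$ must be invoked, not merely density of the range) and the equivalence ``$S_W$ commutes with an orthogonal projection $\pi_M$ iff $S_WM=M$'' for self-adjoint invertible $S_W$; everything else is bookkeeping with $T_WT_W^{*}=S_W$ and $T_VT_V^{*}=S_V$. I expect the main obstacle to be the ``only if'' direction of (1): one must check that self-adjointness of $PQ$ alone yields precisely $S_W\range{O}=\range{O}$ (and symmetrically for $QP$), which is exactly where the surjectivity of $T_W$ and $T_V$ is used.
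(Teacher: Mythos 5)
Your proof is correct, and it takes a somewhat different route than the paper. The paper works with its own definition of the pseudo-inverse from the preliminaries: it checks the inner-inverse identity $\mathcal{M}^{(W,V)}(O)\,Q\,\mathcal{M}^{(W,V)}(O)=\mathcal{M}^{(W,V)}(O)$ for the candidate $Q=\mathcal{M}^{(V,W)}(S_V^{-1}O^{\dag}S_W^{-1})$ and then shows $\range{Q}=\range{\bigl(\mathcal{M}^{(W,V)}(O)\bigr)^{*}}$ and $\kernel{Q}=\kernel{\bigl(\mathcal{M}^{(W,V)}(O)\bigr)^{*}}$, translating the hypotheses $S_V\range{O^{*}}=\range{O^{*}}$ and $S_W\range{O}=\range{O}$ into range and kernel identities such as $\range{T_V^{*}S_V^{-1}O^{\dag}}=\range{T_V^{*}O^{*}}$. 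You instead verify the four Moore--Penrose equations and reduce the self-adjointness of $PQ$ and $QP$ to commutation of $S_W$ (resp.\ $S_V$) with the orthogonal projections $OO^{\dag}$ and $O^{\dag}O$, using that $T_W^{*}CT_W=0$ forces $C=0$ since $T_W$ is onto and $T_W^{*}$ is bounded below for a fusion frame; both computations are valid, and both approaches share the same algebraic backbone $T_WT_W^{*}=S_W$, $T_VT_V^{*}=S_V$. What your route buys: the ``if and only if'' in assertion (1) is proved cleanly in both directions (the paper's argument, as written, essentially establishes the ``if'' direction, with the converse hidden in its ``clearly \dots if and only if'' steps), and it isolates exactly where positivity and invertibility of the frame operators enter, namely in passing from the invariance $S_W\range{O}\subseteq\range{O}$ to the equality $S_W\range{O}=\range{O}$. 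What the paper's route buys: it verifies the pseudo-inverse characterization exactly as stated in its preliminaries, whereas you implicitly invoke the standard equivalence between that characterization and the four Moore--Penrose equations; this is classical and unproblematic, but a one-line remark or citation would make your argument fully self-contained.
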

\begin{proof}
It is straightforward to see that
$$\mathcal{M}^{(W,V)}(O)\mathcal{M}^{(V,W)}(S_V^{-1}O^{\dag}S_W^{-1})
\mathcal{M}^{(W,V)}(O)=\mathcal{M}^{(W,V)}(O).$$
Clearly, $S_V\range{O^*}=\range{O^*}$ if and only if
\begin{eqnarray}\label{ra}\range{T_V^*S_V^{-1}O^{\dag}}=\range{T_V^*O^{*}}.\end{eqnarray}
So,
\begin{eqnarray*}
\range{\mathcal{M}^{(V,W)}(S_V^{-1}O^{\dag}S_W^{-1})}&=&\range{T_V^*S_V^{-1}O^{\dag}S_W^{-1}
T_{W}}\\
&=&\range{T_V^*S_V^{-1}O^{\dag}}\\
\text {(by using (\ref{ra}))}&=& \range{T_V^*O^{*}}\\
&=&\range{T_V^*O^{*}T_W}=\range{\left(\mathcal{M}^{(W,V)}\left(O\right)\right)^*}.
\end{eqnarray*}
Also, $S_W\range{O}=\range{O}$ if and only if
\begin{eqnarray}\label{ke}
\kernel{O^{\dag}S_W^{-1}
T_{W}}=\kernel{O^*T_W}
\end{eqnarray}
\begin{eqnarray*}
\kernel{\mathcal{M}^{(V,W)}(S_V^{-1}O^{\dag}S_W^{-1})}&=&\kernel{T_V^*S_V^{-1}O^{\dag}S_W^{-1}
T_{W}}\\
&=&\kernel{O^{\dag}S_W^{-1}
T_{W}}\\
\text {(by using (\ref{ke}))}&=&\kernel{O^*T_W}\\
&=&\kernel{T_V^*O^*T_W}=\kernel{\left(\mathcal{M}^{(W,V)}(O)\right)^*}.
\end{eqnarray*}
Hence, $\left(\mathcal{M}^{(W,V)}(O)\right)^{\dag}=\mathcal{M}^{(V,W)}(S_V^{-1}O^{\dag}S_W^{-1})$.

2.
One can see that $$\mathcal{M}_{\otimes}^{(W,V)}(O)\mathcal{M}_{\otimes}^{(V,W)}(O^{\dag})\mathcal{M}_{\otimes}^{(W,V)}(O)=\mathcal{M}_{\otimes}^{(W,V)}(O).$$
Also,
\begin{eqnarray*}
\range{\left(\mathcal{M}_{\otimes}^{(W,V)}(O)\right)^{\dag}}&=&\range{T_V^*S_V^{-1/2}O^{\dag}}\\
&=&\range{T_V^*S_V^{-1/2}O^{*}}\\
&=&\range{T_V^*S_V^{-1/2}O^{*}S_W^{-1/2}T_W}=\range{\left(\mathcal{M}_{\otimes}^{(W,V)}(O)\right)^{*}}.\\
\end{eqnarray*}
and
\begin{eqnarray*}
\kernel{\left(\mathcal{M}_{\otimes}^{(W,V)}(O)\right)^{\dag}}&=&\kernel{O^{\dag}S_W^{-1/2}T_W}\\
&=&\kernel{T_VS_V^{-1/2}O^{*}S_W^{-1/2}T_W}=\kernel{\left(\mathcal{M}_{\otimes}^{(W,V)}(O)\right)^{*}}.
\end{eqnarray*}
Therefore, $\left(\mathcal{M}_{\otimes}^{(W,V)}(O)\right)^{\dag}=\mathcal{M}_{\otimes}^{(V,W)}(O^{\dag})$.
\end{proof}

%---------------------------------------------------------------------------
\section{Matrix representation of Schatten $p$-class operators}

 Given $0<p<\infty$, we
define the Schatten $p$-class of $\mathcal{H}$, denoted by
$S_{p}(\mathcal{H})$, as the space of all compact operators $T$ on
$\mathcal{H}$ with the singular value sequence
$\{\lambda_{n}\}_{n\in I}$ belonging to $\ell^{p}$. The space
$S_{p}(\mathcal{H})$ is a Banach space with the norm
\begin{eqnarray}\label{norm}
\|T\|_{p}=\left(\sum_{n}|\lambda_{n}|^{p}\right)^{\frac{1}{p}}.
\end{eqnarray}
 The Banach space $S_{1}(\mathcal{H})$ is called the\textit{ trace class} of $\mathcal{H}$. A compact operator $T\in S_{1}(\Hil)$ if and only if $trace(T) := \sum_{i\in I}\left\langle Te_i,e_i\right\rangle<\infty,$ for every orthonormal basis $\{e_i\}_{i\in I}$ for $\Hil$.
Also, $S_{2}(\mathcal{H})$ is called the \textit{Hilbert-Schmidt class} and $T\in S_{2}(\Hil)$ if and only if $\|T\|^2_2=\sum_{i\in I}\left\| Te_i\right\|^2<\infty$.

We know that $T\in S_p(\mathcal{H})$ if and only if
$\{\|{Te_n}\|\}_{n\in I} {\in \ell^p}$ for all orthonormal bases
$\{e_n\}_{n\in I}$. For $0<p \leq 2$ it is even enough to have the
property for only one orthonormal basis.  It is
well known  that $S_p(\mathcal{H})$ is a two sided $*$-ideal of
$\BL{\mathcal{H}}$, that is, a Banach algebra under the norm
(\ref{norm}) and the finite rank operators are dense in
$(S_p(\mathcal{H}), \|.\|_p)$. Moreover, for $T\in
S_p(\mathcal{H})$, one has $\|T\|_p = \|T^*\|_p, \|T\|\leq\|T\|_p$
and if $S\in \BL{\mathcal{H}_1,\mathcal{H}_2}$, then $\|ST\|_p
\leq\|S\|\|T\|_p$ and $\|TS\|_p\leq\|S\|\|T\|_p$.
It is well known that $S_{2}(\Hil_{1},\Hil_{2})$, the space of Hilbert-Schmidt operators from $\Hil_1$ to $\Hil_2$, is a Hilbert space under the following inner product
\begin{eqnarray*}
\left\langle U_1,U_2\right\rangle=trace(U_1^*U_2),\qquad \left(U_1,U_2\in S_{2}(\Hil_{1},\Hil_{2})\right).
 \end{eqnarray*}
  For more
information about these operators, see
\cite{gohberg1,pitch,schatt1,weidm80}.
\begin{lemma}
Let $W=(W_i,w_i)$,  $V=(V_i,v_i)$ be Bessel fusion sequences in $\Hil$ and $O\in \BL{\Hil}$. For $0<p<\infty$, the following are  valid.
\begin{enumerate}
\item\label{1} If $O$ is compact, then $\mathcal{M}^{(W,V)}(O)$ and $\mathcal{M}_{\otimes}^{(W,V)}(O)$ are compact.
\item\label{2} If $O\in S_p(\Hil)$, then $\mathcal{M}^{(W,V)}(O)$, $\mathcal{M}_{\otimes}^{(W,V)}(O) \in S_p\left(\ltiv,\ltiw\right)$.
\end{enumerate}
When $W$ and $V$ are fusion frames, then (1) and (2) are equivalent.
\end{lemma}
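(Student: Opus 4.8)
The plan is to reduce everything to two facts that are already available: that $\mathcal{M}^{(W,V)}(O)=T_{W}^{*}OT_{V}$ by Theorem~\ref{sec:matbyfram1}, with $T_{W}^{*}$ and $T_{V}$ bounded by $\sqrt{B_{W}}$ and $\sqrt{B_{V}}$; and that both the compact operators and each Schatten class $S_{p}$ form a two-sided ideal, stable under left and right composition with bounded operators and satisfying $\|RTS\|_{p}\le\|R\|\,\|T\|_{p}\,\|S\|$, also for operators acting between different Hilbert spaces (as recalled at the start of Section~5).

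First I would dispose of $\mathcal{M}^{(W,V)}$. Since $\mathcal{M}^{(W,V)}(O)=T_{W}^{*}OT_{V}$ is the composition of the bounded operator $T_{W}^{*}$, the operator $O$, and the bounded operator $T_{V}$, the ideal property gives at once: if $O$ is compact then $\mathcal{M}^{(W,V)}(O)$ is compact, and if $O\in S_{p}(\Hil)$ then $\mathcal{M}^{(W,V)}(O)\in S_{p}(\ltiv,\ltiw)$, with $\|\mathcal{M}^{(W,V)}(O)\|_{p}\le\sqrt{B_{W}B_{V}}\,\|O\|_{p}$. For $\mathcal{M}_{\otimes}^{(W,V)}$ I would use $\mathcal{M}_{\otimes}^{(W,V)}(O)=T_{W}^{*}S_{W}^{-1/2}OS_{V}^{-1/2}T_{V}=\mathcal{M}^{(W,V)}(S_{W}^{-1/2}OS_{V}^{-1/2})$; this presupposes that $W$ and $V$ are fusion frames, so that the bounded operators $S_{W}^{-1/2},S_{V}^{-1/2}$ exist, and then $S_{W}^{-1/2}OS_{V}^{-1/2}$ is compact (respectively lies in $S_{p}$) whenever $O$ is, so the previous observation applies verbatim. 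This establishes parts (1) and (2).

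For the equivalence in the fusion-frame case, I would invoke the reconstruction identities from Section~3. From $\mathcal{O}^{(W,V)}\mathcal{M}^{(W,V)}(O)=T_{W}T_{W}^{*}OT_{V}T_{V}^{*}=S_{W}OS_{V}$ we get $O=S_{W}^{-1}T_{W}\,\mathcal{M}^{(W,V)}(O)\,T_{V}^{*}S_{V}^{-1}$, and from $\mathcal{O}_{\otimes}^{(W,V)}\mathcal{M}_{\otimes}^{(W,V)}=\identity{\BL{\Hil}}$ we get $O=S_{W}^{-1/2}T_{W}\,\mathcal{M}_{\otimes}^{(W,V)}(O)\,T_{V}^{*}S_{V}^{-1/2}$. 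In each case $O$ is expressed as a product of bounded operators with the matrix representation sandwiched in the middle, so compactness (respectively $S_{p}$-membership) of the matrix representation forces the same for $O$. Together with the forward implications this upgrades (1) and (2) to the equivalences ``$O$ compact $\Leftrightarrow\mathcal{M}^{(W,V)}(O)$ compact'' and ``$O\in S_{p}(\Hil)\Leftrightarrow\mathcal{M}^{(W,V)}(O)\in S_{p}(\ltiv,\ltiw)$'', and likewise for $\mathcal{M}_{\otimes}$; in particular (1) and (2) become equivalent to one another.

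I do not expect a genuine obstacle: the argument is essentially bookkeeping with ideal properties. The points that need care are to use the Schatten estimate in its cross-Hilbert-space form rather than the single-space version; to note that the $\mathcal{M}_{\otimes}$ assertions are only meaningful, hence only to be proved, when $W$ and $V$ are fusion frames; and to order the factors in the two reconstruction identities correctly, so that the converse genuinely transfers compactness and $S_{p}$-membership back to $O$.
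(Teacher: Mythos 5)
Your proof is correct and follows essentially the same route as the paper: the forward implications come from writing $\mathcal{M}^{(W,V)}(O)=T_W^*OT_V$ (and $\mathcal{M}_\otimes^{(W,V)}(O)=T_W^*S_W^{-1/2}OS_V^{-1/2}T_V$) and invoking the two-sided ideal property of the compacts and of $S_p$, while the converse in the fusion-frame case uses the same reconstruction identity $O=S_W^{-1}T_W\,\mathcal{M}^{(W,V)}(O)\,T_V^*S_V^{-1}$ that the paper employs. Your additional remarks (the cross-space Schatten estimate, the explicit $\mathcal{M}_\otimes$ reconstruction, and the observation that the $\mathcal{M}_\otimes$ statements presuppose fusion frames) only make explicit what the paper leaves implicit.
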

\begin{proof}
The sentences (1) and (2) are followed by the ideal property  compact operators and $S_p(\Hil)$ in $\BL{\Hil_1,\Hil_2}$. For compact operators see Theorem 4.18 of [Rudin, 1973].  Conversely, if $W$ and $V$ are fusion frames then
\begin{eqnarray*}
O=S_{W}^{-1}T_WT_{W}^*OT_VT_V^*S_{V}^{-1}=
S_{W}^{-1}T_W\mathcal{M}^{(W,V)}(O)T_V^*S_{V}^{-1}.
\end{eqnarray*}
Again by the ideal property $S_p(\Hil)$ in $\BL{\Hil_1,\Hil_2}$ the result follows immediately.
\end{proof}
 The following theorem computes the pseudo-inverse of $\mathcal{M}^{(W,V)}$, $\mathcal{O}^{(V,W)}$, $\mathcal{M}_{\otimes}^{(W,V)}$ and $\mathcal{O}_{\otimes}^{(V,W)}$ restricted to Hilbert-Schmidt operators, which are denoted by $\mathcal{M}_{\HS}^{(W,V)}$, $\mathcal{O}_{\HS}^{(V,W)}$, $\mathcal{M}_{\otimes \HS}^{(W,V)}$ and $\mathcal{O}_{\otimes \HS }^{(W,V)}$, respectively.
\begin{theorem}
Let $W$ and $V$ be fusion frames in $\Hil$. Then
\begin{enumerate}
\item $\left(\mathcal{M}^{(W,V)}_{\HS}\right)^{\dag}= \left( S_W^{-1}\otimes S_V^{-1}\right)\mathcal{O}_{\HS}^{(V,W)}$.
\item  $\left(\mathcal{M}_{\otimes {\HS}}^{(W,V)}\right)^{\dag}=\mathcal{O}_{\otimes \HS}^{(V,W)}$.
\end{enumerate}
%{\xxl Do also for $\mathcal{M}_\otimes$ and $\mathcal{O}_\otimes$.}
\end{theorem}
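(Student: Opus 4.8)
The plan is to realise each of the two maps as one possessing an explicit bounded left inverse and then to upgrade ``left inverse'' to ``pseudo-inverse''. The tool is the elementary fact that if $U\colon H_{1}\to H_{2}$ and $L\colon H_{2}\to H_{1}$ are bounded Hilbert-space operators with $LU=\identity{H_{1}}$ and $(UL)^{*}=UL$, then $U$ has closed range and $U^{\dag}=L$: indeed $U$ is injective and $\|x\|=\|LUx\|\le\|L\|\,\|Ux\|$ forces $\range{U}$ closed, while $UL$ is then a self-adjoint idempotent with $\range{UL}=\range{U}$, so the four Penrose identities $ULU=U$, $LUL=L$, $(UL)^{*}=UL$, $(LU)^{*}=LU$ all hold. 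Before applying this one records well-definedness: by the preceding lemma $\mathcal{M}^{(W,V)}$ and $\mathcal{M}^{(W,V)}_{\otimes}$ carry $S_{2}(\Hil)$ into $S_{2}(\ltiv,\ltiw)$, and since $S_{2}$ is a two-sided $*$-ideal the maps $\mathcal{O}^{(W,V)},\mathcal{O}^{(W,V)}_{\otimes}$ carry $S_{2}(\ltiv,\ltiw)$ back into $S_{2}(\Hil)$; so $\mathcal{M}^{(W,V)}_{\HS},\mathcal{M}^{(W,V)}_{\otimes\HS},\mathcal{O}^{(W,V)}_{\HS},\mathcal{O}^{(W,V)}_{\otimes\HS}$ are bounded operators between the Hilbert spaces $S_{2}(\Hil)$ and $S_{2}(\ltiv,\ltiw)$. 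Computing adjoints in these spaces (where the Hilbert--Schmidt adjoint is $(S\otimes T)^{*}=S^{*}\otimes T^{*}$, obtained by cyclicity of the trace, and is not the operator-space adjoint of Subsection~\ref{sec:tensop0}) one gets $(\mathcal{M}^{(W,V)}_{\HS})^{*}=\mathcal{O}^{(W,V)}_{\HS}$ and $(\mathcal{M}^{(W,V)}_{\otimes\HS})^{*}=\mathcal{O}^{(W,V)}_{\otimes\HS}$; these are the operators written $\mathcal{O}^{(V,W)}_{\HS}$, resp.\ $\mathcal{O}^{(V,W)}_{\otimes\HS}$, in the statement.

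\textbf{Part (2).} I use the reconstruction identity $\mathcal{O}^{(W,V)}_{\otimes}\mathcal{M}^{(W,V)}_{\otimes}=\identity{\BL{\Hil}}$ established in the earlier theorem; restricting it to $S_{2}(\Hil)$ it reads $(\mathcal{M}^{(W,V)}_{\otimes\HS})^{*}\mathcal{M}^{(W,V)}_{\otimes\HS}=\identity{S_{2}(\Hil)}$, i.e.\ $\mathcal{M}^{(W,V)}_{\otimes\HS}$ is an isometry of Hilbert spaces. An isometry has closed range and its pseudo-inverse coincides with its adjoint, so $(\mathcal{M}^{(W,V)}_{\otimes\HS})^{\dag}=(\mathcal{M}^{(W,V)}_{\otimes\HS})^{*}=\mathcal{O}^{(W,V)}_{\otimes\HS}$, which is assertion~(2). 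Concretely this comes down to observing that $\mathcal{M}^{(W,V)}_{\otimes\HS}\mathcal{O}^{(W,V)}_{\otimes\HS}$ is the conjugation $M\mapsto\bigl(T_{W}^{*}S_{W}^{-1}T_{W}\bigr)\,M\,\bigl(T_{V}^{*}S_{V}^{-1}T_{V}\bigr)$, a self-adjoint idempotent on $S_{2}$ because $T_{W}^{*}S_{W}^{-1}T_{W}$ and $T_{V}^{*}S_{V}^{-1}T_{V}$ are each self-adjoint idempotents (using $T_{W}T_{W}^{*}=S_{W}$ and $T_{V}T_{V}^{*}=S_{V}$), and then invoking the lemma of the first paragraph with $L=\mathcal{O}^{(W,V)}_{\otimes\HS}$.

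\textbf{Part (1).} Now $\mathcal{M}^{(W,V)}_{\HS}$ is bounded below but not isometric, so I apply the same lemma with $L=(S_{W}^{-1}\otimes S_{V}^{-1})\,\mathcal{O}^{(W,V)}_{\HS}$, that is, $L\colon M\mapsto S_{W}^{-1}T_{W}MT_{V}^{*}S_{V}^{-1}$. The relation $LU=\identity{S_{2}(\Hil)}$ follows from the direct computation $T_{W}\bigl(T_{W}^{*}OT_{V}\bigr)T_{V}^{*}=S_{W}OS_{V}$ (hence, after pre- and post-composing with $S_{W}^{-1}$ and $S_{V}^{-1}$, one recovers $O$), and $UL$ is once again the self-adjoint idempotent conjugation $M\mapsto\bigl(T_{W}^{*}S_{W}^{-1}T_{W}\bigr)\,M\,\bigl(T_{V}^{*}S_{V}^{-1}T_{V}\bigr)$ of part~(2). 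The lemma then gives $(\mathcal{M}^{(W,V)}_{\HS})^{\dag}=(S_{W}^{-1}\otimes S_{V}^{-1})\,\mathcal{O}^{(W,V)}_{\HS}$, which is assertion~(1). Note that, in contrast with the earlier pointwise formula for $(\mathcal{M}^{(W,V)}(O))^{\dag}$, no hypotheses of the type $S_{V}\range{O^{*}}=\range{O^{*}}$, $S_{W}\range{O}=\range{O}$ are needed here, precisely because the ``inner'' operator is $\identity{\Hil}$, for which $S_{W}\range{\identity{\Hil}}=\Hil=\range{\identity{\Hil}}$ holds automatically.

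\textbf{Main obstacle.} The argument is short once its two inputs are in hand — the earlier reconstruction identity $\mathcal{O}^{(W,V)}_{\otimes}\mathcal{M}^{(W,V)}_{\otimes}=\identity{\BL{\Hil}}$ and the elementary ``$LU=I$ with $UL$ self-adjoint $\Rightarrow U^{\dag}=L$'' lemma — and the only real points of care are bookkeeping. The first is to make sure every operator product appearing under the trace is genuinely trace class, so that the restricted maps really take values in the stated $S_{2}$-spaces and the Hilbert--Schmidt adjoint behaves as $S^{*}\otimes T^{*}$: this is nothing but the ideal property of $S_{p}$ together with $O\in S_{2}(\Hil)$, already recorded in the preceding lemma. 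The second is the identification $T_{W}^{*}S_{W}^{-1}T_{W}=(T_{W}^{*}S_{W}^{-1}T_{W})^{2}=(T_{W}^{*}S_{W}^{-1}T_{W})^{*}$ and its counterpart for $V$, which rely only on $T_{W}T_{W}^{*}=S_{W}$, $T_{V}T_{V}^{*}=S_{V}$ and positivity of the frame operators. I expect the trace-class check to be the one place a hurried write-up could slip.
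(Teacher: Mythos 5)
Your argument is correct, and its computational core coincides with the paper's: both identify the Hilbert--Schmidt adjoint via cyclicity of the trace, $\left(\mathcal{M}_{\HS}^{(W,V)}\right)^{*}M=T_{W}MT_{V}^{*}$, and both run on the identities $T_{W}T_{W}^{*}=S_{W}$, $T_{V}T_{V}^{*}=S_{V}$. Where you differ is the verification mechanism. The paper checks the three conditions of its definition of the pseudo-inverse ($UU^{\dag}U=U$ together with $\range{U^{\dag}}=\range{U^{*}}$ and $\kernel{U^{\dag}}=\kernel{U^{*}}$), which for part (1) forces a comparison of the range and kernel of the candidate $\left(S_{W}^{-1}\otimes S_{V}^{-1}\right)\mathcal{O}_{\HS}^{(V,W)}$ with those of the adjoint; you instead verify the four Penrose identities through the lemma ``$LU=\identity{S_{2}(\Hil)}$ and $UL$ self-adjoint $\Rightarrow U^{\dag}=L$'', with $UL$ the conjugation $M\mapsto\bigl(T_{W}^{*}S_{W}^{-1}T_{W}\bigr)M\bigl(T_{V}^{*}S_{V}^{-1}T_{V}\bigr)$ by self-adjoint idempotents. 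This buys two things the paper leaves implicit: closed range of $\mathcal{M}_{\HS}^{(W,V)}$ and $\mathcal{M}_{\otimes\HS}^{(W,V)}$ comes for free from the lower bound $\|O\|_{2}\le\|L\|\,\|UO\|_{2}$, and part (2) reduces to the observation that $\mathcal{M}_{\otimes\HS}^{(W,V)}$ is an isometry — the restriction of the reconstruction identity \eqref{eq:operreconst1} to $S_{2}(\Hil)$ — whose pseudo-inverse is its adjoint; this also sidesteps the range/kernel bookkeeping that is the least transparent step in the paper's part (1). Two small points to make explicit in a final write-up: the Penrose-equation characterization determines the same operator as the paper's range/kernel definition of $U^{\dag}$ (a classical equivalence, but it deserves one sentence since the paper uses the latter), and, as you already note, the superscripts in the statement must be read so that the candidate inverse acts by $M\mapsto T_{W}MT_{V}^{*}$ (pre/post-composed with $S_{W}^{-1},S_{V}^{-1}$ in part (1)), which is exactly the operator the paper computes with in its own proof.
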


\begin{proof}  Obviously, we can see that $\mathcal{M}_{\HS}^{(W,V)}:S_2(\Hil)\rightarrow S_2\left(\ltiv,\ltiw\right)$ given by
\begin{eqnarray*}
\mathcal{M}_{\HS}^{(W,V)}(O)=T_W^*OT_V,
\end{eqnarray*}
for all $O\in S_2(\Hil)$ is well-defined. Also,
$$\mathcal{M}_{\HS}^{(W,V)}\left( S_W^{-1}\otimes S_V^{-1}\right)\mathcal{O}_{\HS}^{(V,W)}\mathcal{M}_{\HS}^{(W,V)}=\mathcal{M}_{\HS}^{(W,V)}.$$
  Moreover, $\left(\mathcal{M}_{\HS}^{(W,V)}\right)^*M=T_WMT_V^*$. Indeed,  for all $M\in  S_2\left(\ltiv,\ltiw\right)$ we have
\begin{eqnarray*}
\left\langle \left(\mathcal{M}_{\HS}^{(W,V)}\right)^*M,O\right\rangle
&=&\left\langle M, T_W^*OT_V\right\rangle\\
&=&trace(M^*T_W^*OT_V)\\
&=&trace(T_VM^*T_W^*O)=\left\langle T_{W}MT_{V}^*,O\right\rangle,
\end{eqnarray*}
%{\xxl Most of the things above follows from the frame property, should we do it later?}
for all $O\in S_2(\Hil)$. The third identity follows from the fact that $trace(UT)=trace(TU)$ for all $U\in S_1(\Hil)$ and $T\in B(\Hil)$. Hence,
\begin{eqnarray*}
\left(S_W^{-1}\otimes S_V^{-1}\right)\mathcal{O}_{\HS}^{(V,W)}(M)=S_W^{-1}T_WMT_V^{*}S_V^{-1}=
S_W^{-1}\left(\mathcal{M}_{\HS}^{(W,V)}\right)^*MS_V^{-1}
\end{eqnarray*}
and this immediately follows that
$\range{\mathcal{O}_{\HS}^{(V,W)}}=
\range{\left(\mathcal{M}_{\HS}^{(W,V)}\right)^*}$ and $\kernel{\mathcal{O}_{\HS}^{(V,W)}}=
\kernel{\left(\mathcal{M}_{\HS}^{(W,V)}\right)^*}$. Hence,  $\left(\mathcal{M}_{\HS}^{(W,V)}\right)^{\dag}=\mathcal{O}_{\HS}^{(V,W)}$.
In the same way, (2) follows.
\end{proof}

\subsection{Fusion Frames in Hilbert Schmidt Operators} \label{sec:FusionframeHS}

The above results bear a striking resemblance to the frame-related operators, the analysis and synthesis operators, which we will make more formal below. In the discrete frame case, it is known that the tensor product of frames build a frame again, see \cite{xxlframoper1,xxlframehs07}. For fusion frame we can show a similar result.

 Suppose $W = (W_i,w_i)$ and $V = (V_i,v_i)$ are fusion frames.  Let us denote by $W_j\otimes V_i$ the space of Hilbert-Schmidt operators from $V_i$ to $W_j$. This notation is justified as this space is isomorphic to the tensor product of the two spaces with the inner product introduced above \cite{defant}. This can be considered as a subspace of $S_2(\Hil)$, which we will also denote by $\Hil \otimes \Hil$.
%%%%%%%%%%%%%%%%%%%%%%%%%%%%%%%%%%%%%%%%%%%%%%%%%%%%%%%%%%%%%%%%%%%%%%%%%%%%%%%%%%%%%%%%%%%%%%%%%%%%%%%%%%
\begin{lemma} Let $W = (W_i,w_i)$ and $V = (V_i,v_i)$ be fusion frames. Then for all $i,j$ we have;
\begin{enumerate}
\item $W_j\otimes V_i=\left\{\pi_{W_j}  O \pi_{V_i},  O \in S_2(\Hil)\right\}$ \label{charaHSji}
\item  $\pi_{W_j}\otimes\pi_{V_i}$ is the orthogonal projection on $W_j\otimes V_i$. i.e.  $\pi_{W_j\otimes V_i}=\pi_{W_j}\otimes\pi_{V_i}$. In particular, $W_j\otimes V_i$ is a closed subspace of $S_2(\Hil)$,
\end{enumerate}
\end{lemma}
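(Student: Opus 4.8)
The plan is to establish the two claims essentially by unwinding the definition of the tensor product of operators from Subsection \ref{sec:tensop0}, namely $(S\otimes T)(O) = S\circ O\circ T^*$, and using that orthogonal projections are self-adjoint idempotents. For part \eqref{charaHSji}, I would argue two inclusions. For "$\supseteq$": given $O\in S_2(\Hil)$, the operator $\pi_{W_j} O \pi_{V_i}$ maps $\Hil$ into $W_j$, vanishes on $V_i^\perp$, and is Hilbert-Schmidt (since $S_2(\Hil)$ is a two-sided ideal in $\BL{\Hil}$, as recalled before the first lemma of this section); restricting its domain and codomain gives an element of $S_2(V_i,W_j)=W_j\otimes V_i$, and conversely any such restricted operator, extended by $0$ on $V_i^\perp$ and composed with the inclusion $W_j\hookrightarrow\Hil$, is of the form $\pi_{W_j} O\pi_{V_i}$ with $O$ its own extension. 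For "$\subseteq$": if $U\in W_j\otimes V_i$, viewing $U$ as an operator on $\Hil$ that is supported on $V_i$ with range in $W_j$, we have $\pi_{W_j} U\pi_{V_i}=U$, so $U$ lies in the right-hand set with the choice $O=U$. This establishes the set equality.

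For part (2), the natural approach is: first show $\pi_{W_j}\otimes\pi_{V_i}$ is idempotent and self-adjoint, then identify its range. Idempotency follows from property (5) of the tensor product, $(S\otimes T)(A\otimes B)=(SA)\otimes(TB)$, giving $(\pi_{W_j}\otimes\pi_{V_i})^2 = \pi_{W_j}^2\otimes\pi_{V_i}^2 = \pi_{W_j}\otimes\pi_{V_i}$. Self-adjointness follows from property (1), $(S\otimes T)^* = S^*\otimes T^*$, together with $\pi_{W_j}^*=\pi_{W_j}$ and $\pi_{V_i}^*=\pi_{V_i}$. An idempotent self-adjoint bounded operator on the Hilbert space $S_2(\Hil)$ is the orthogonal projection onto its range. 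It then remains to compute that range: by definition $(\pi_{W_j}\otimes\pi_{V_i})(O) = \pi_{W_j}\, O\,\pi_{V_i}^* = \pi_{W_j}\, O\, \pi_{V_i}$ for $O\in S_2(\Hil)$, so the range is exactly $\{\pi_{W_j} O\pi_{V_i} : O\in S_2(\Hil)\}$, which by part \eqref{charaHSji} equals $W_j\otimes V_i$. Hence $\pi_{W_j}\otimes\pi_{V_i}=\pi_{W_j\otimes V_i}$, and being the range of an orthogonal projection on a Hilbert space, $W_j\otimes V_i$ is a closed subspace of $S_2(\Hil)$.

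The only genuinely delicate point is the identification, in part \eqref{charaHSji}, of the abstract tensor product space $W_j\otimes V_i$ (defined as $S_2(V_i,W_j)$ with its Hilbert-Schmidt inner product) with the concrete subspace of $S_2(\Hil)$ described on the right — i.e. making precise the isomorphism that extends an operator $V_i\to W_j$ by zero on $V_i^\perp$ and post-composes with the inclusion $W_j\hookrightarrow\Hil$, and checking this is an isometry onto the stated set. I expect this to be the main obstacle, though it is mostly a bookkeeping matter: one must verify that this extension preserves the Hilbert-Schmidt norm (clear, since adjoining zeros to the matrix of an operator changes no singular values) and that its image consists precisely of the Hilbert-Schmidt operators on $\Hil$ killing $V_i^\perp$ and mapping into $W_j$, which is exactly $\{\pi_{W_j} O\pi_{V_i}: O\in S_2(\Hil)\}$. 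Once this identification is in place, the rest is the routine algebra of tensor products of operators recorded in Subsection \ref{sec:tensop0}.
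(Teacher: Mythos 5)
Your proof is correct. For part (1) you follow essentially the same route as the paper: the ideal property of $S_2(\Hil)$ gives one inclusion, and extension by zero on $V_i^\perp$ (equivalently, $O=U$ with $U=\pi_{W_j}U\pi_{V_i}$) gives the other; you are in fact more explicit than the paper about the identification of $W_j\otimes V_i=S_2(V_i,W_j)$ with a subset of $S_2(\Hil)$ and about why zero-extension preserves the Hilbert--Schmidt property, points the paper only glosses over. For part (2) your organization genuinely differs: you show that $\pi_{W_j}\otimes\pi_{V_i}$ is a bounded self-adjoint idempotent, hence the orthogonal projection onto its range, and then identify that range with $W_j\otimes V_i$ via part (1); the paper instead verifies directly that the operator acts as the identity on $W_j\otimes V_i$ and annihilates $(W_j\otimes V_i)^{\perp}$ by a cyclicity-of-trace computation. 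Your route buys closedness of $W_j\otimes V_i$ for free (since $\range{P}=\kernel{\identity{S_2(\Hil)}-P}$ for a bounded idempotent $P$), while the paper's route is self-contained and does not lean on the adjoint formula for tensor operators. One small caveat: the paper's Subsection 2.2 states property (1) as $(S\otimes T)^*=T^*\otimes S^*$, whereas you invoke $(S\otimes T)^*=S^*\otimes T^*$; for the definition $(S\otimes T)(O)=S\,O\,T^*$ your version is the correct one, and in any case for two orthogonal projections the self-adjointness of $\pi_{W_j}\otimes\pi_{V_i}$ on $S_2(\Hil)$ can be checked directly by the same trace computation the paper uses, so this does not affect the validity of your argument.
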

\begin{proof} Using the ideal property of $S_2(\Hil)$ immediately follows that $\pi_{W_j} O \pi_{V_i}\in W_j\otimes V_i$, for all  $O \in S_2(\Hil)$. Conversely, assume that $U_{ji}:V_i\rightarrow W_j$ belongs to $W_j\otimes V_i$, then $U_{ji}$ can be extended to a bounded operator $O$ from $\Hil$ to $W_j$. In particular, $O\in S_2(\Hil)$ and $U_{ji}=\pi_{W_j} O \pi_{V_i}$. This proves (1).
To compute the orthogonal projection on $W_j\otimes V_i$ we first apply (\ref{charaHSji}) to show that $ \pi_{W_j}\otimes\pi_{V_i}:S_2(\Hil) \rightarrow W_j \otimes V_i$ is the identity on $W_j\otimes V_i$. In addition, if $U\in (W_j\otimes V_i)^{\perp}$ and $O\in S_2(\Hil)$, then
\begin{eqnarray*}
\langle (\pi_{W_j}\otimes\pi_{V_i}) U, O\rangle&=&\langle \pi_{W_j}U\pi_{V_i}, O\rangle\\
&=&trace \left(\pi_{V_i}U^*\pi_{W_j}O \right)\\
&=&trace \left(U^*\pi_{W_j}O \pi_{V_i}\right)\\
&=&\langle U, \pi_{W_j}O \pi_{V_i}\rangle=0,
\end{eqnarray*}
where the last identity follows from (\ref{charaHSji}).

\end{proof}

We can prove (similar to the results in  \cite{Arefi16,xxlframehs07}) that the tensors of fusions systems are fusion systems in the class of Hilbert-Schmidt operators with the same properties in the following sense:
 \begin{theorem}
The sequences $(W,w_i)$ and $(V,v_i)$ are Bessel fusion sequences (fusion frames, fusion Riesz sequences) with frame bounds $A_W, B_W$ and $A_V, B_V$, respectively if and only if $W\otimes V:=\left( W_j\otimes V_i, v_iw_j\right)$ is a Bessel fusion sequence (fusion frame, fusion Riesz sequence) for $S_2({\Hil})$ with frame bounds $A_V A_W$ and $B_V B_W$.
\end{theorem}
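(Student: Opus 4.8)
The plan is to reduce everything to the preceding Lemma, which identifies $\pi_{W_j\otimes V_i}$ with $\pi_{W_j}\otimes\pi_{V_i}$, i.e. $\pi_{W_j\otimes V_i}O=\pi_{W_j}O\pi_{V_i}$ for $O\in S_2(\Hil)$, and then to a direct estimate of the fusion sum of $W\otimes V$. Fix an orthonormal basis $\{e_k\}$ of $\Hil$. For $O\in S_2(\Hil)$ write $\|\pi_{W_j}O\pi_{V_i}\|_2^2=\sum_k\|\pi_{W_j}(O\pi_{V_i}e_k)\|^2$; multiplying by $w_j^2$, summing over $j$, and applying the fusion frame inequality for $W$ to each vector $O\pi_{V_i}e_k$ (all summands are nonnegative, so interchanging the two sums is harmless) gives
\[ A_W\,\|O\pi_{V_i}\|_2^2\ \le\ \sum_j w_j^2\,\|\pi_{W_j}O\pi_{V_i}\|_2^2\ \le\ B_W\,\|O\pi_{V_i}\|_2^2 . \]
Since $\|O\pi_{V_i}\|_2=\|\pi_{V_i}O^*\|_2$, the same argument applied to the fusion frame $V$ and the vectors $O^*e_k$ yields $A_V\|O^*\|_2^2\le\sum_i v_i^2\|O\pi_{V_i}\|_2^2\le B_V\|O^*\|_2^2$. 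Weighting the first chain by $v_i^2$, summing over $i$, substituting, and using $\|O^*\|_2=\|O\|_2$ produces
\[ A_WA_V\,\|O\|_2^2\ \le\ \sum_{i,j}(v_iw_j)^2\,\|\pi_{W_j\otimes V_i}O\|_2^2\ \le\ B_WB_V\,\|O\|_2^2, \]
which is precisely the fusion frame inequality for $W\otimes V$ with weights $v_iw_j$ and bounds $A_WA_V,B_WB_V$. Retaining only the right-hand inequalities throughout gives the Bessel statement.

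For the converse I would test the inequality of $W\otimes V$ on rank-one operators: for $f,g\in\Hil$ with $f\otimes g$ the operator $h\mapsto\langle h,g\rangle f$ one has $\pi_{W_j}(f\otimes g)\pi_{V_i}=(\pi_{W_j}f)\otimes(\pi_{V_i}g)$, whence $\|\pi_{W_j\otimes V_i}(f\otimes g)\|_2=\|\pi_{W_j}f\|\,\|\pi_{V_i}g\|$ and $\|f\otimes g\|_2=\|f\|\,\|g\|$. Therefore
\[ \sum_{i,j}(v_iw_j)^2\|\pi_{W_j}f\|^2\|\pi_{V_i}g\|^2=\Big(\sum_j w_j^2\|\pi_{W_j}f\|^2\Big)\Big(\sum_i v_i^2\|\pi_{V_i}g\|^2\Big), \]
and the fusion frame (resp. Bessel) inequality for $W\otimes V$ turns into a two-sided (resp. one-sided) estimate for this product. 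Fixing a nonzero $g$ for which $\sum_i v_i^2\|\pi_{V_i}g\|^2$ is positive and finite — positivity comes from the lower bound of $W\otimes V$ evaluated at a fixed nonzero $f$, finiteness from the upper bound — one reads off the fusion frame (Bessel) inequality for $W$, and symmetrically for $V$. Feeding the resulting optimal constants back into the forward direction shows that $A_WA_V$ and $B_WB_V$ are exactly the bounds inherited, so the two halves are consistent.

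The fusion Riesz case I would handle through synthesis operators. Identifying the Hilbert space $\big(\sum_{i,j}\bigoplus(W_j\otimes V_i)\big)_{\ell^2}$ with the Hilbert–Schmidt operators from $\ltiv$ to $\ltiw$ (legitimate by the preceding Lemma and the natural identification of Hilbert–Schmidt operators with Hilbert space tensor products recalled in Subsection~\ref{sec:tensop0}), and using that an element of $W_j\otimes V_i=\HS(V_i,W_j)$ sits in $S_2(\Hil)$ as $O_{ji}\pi_{V_i}$, one checks that the synthesis operator of $W\otimes V$ equals the operator tensor product $T_W\otimes T_V$; hence its adjoint is $T_W^*\otimes T_V^*$ and its frame operator is $S_W\otimes S_V$. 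Now Proposition~\ref{equi-Riesz} characterizes fusion Riesz bases by bijectivity of the synthesis operator, and property~(3) of Subsection~\ref{sec:tensop0} says $T_W\otimes T_V$ is bijective iff both $T_W$ and $T_V$ are, which settles the fusion Riesz basis equivalence; the fusion Riesz sequence statement follows by restricting to $\overline{\textrm{span}}\{W_j\otimes V_i\}$. This identification also re-derives the frame bounds at once in the fusion frame case: $B_{W\otimes V}=\|S_W\otimes S_V\|=\|S_W\|\,\|S_V\|=B_WB_V$ and, by positivity and invertibility together with property~(3), $A_{W\otimes V}=\|S_{W\otimes V}^{-1}\|^{-1}=A_WA_V$.

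I expect the only real obstacle to be that last identification of the synthesis operator with $T_W\otimes T_V$: one must keep the conventions straight — the operator tensor product is $S\otimes T:O\mapsto SOT^*$, the subspace $W_j\otimes V_i$ is $\HS(V_i,W_j)$, and it embeds isometrically into $S_2(\Hil)$ by zero-extension — so that the equality $T_{W\otimes V}=T_W\otimes T_V$ holds on the nose. The remaining steps are routine uses of the preceding Lemma, nonnegativity of the summands, and the elementary properties of operator tensor products listed in Subsection~\ref{sec:tensop0}.
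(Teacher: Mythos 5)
Your argument is correct, and it is genuinely more self-contained than the paper's. The paper's proof consists of a single computation, namely that the analysis operator of $W\otimes V$ satisfies $T_{W\otimes V}^{*}O=\{w_jv_i\,\pi_{W_j}O\pi_{V_i}\}=(T_W^{*}\otimes T_V^{*})(O)$, after which all three cases (Bessel, frame, Riesz) and the bounds are delegated to the characterization of fusion frames via synthesis/analysis operators in \cite[Theorem~3.12]{caskut04} together with Corollary~\ref{Mlr} (injectivity/surjectivity of tensor products). Your Riesz-basis part, via $T_{W\otimes V}=T_W\otimes T_V$, Proposition~\ref{equi-Riesz} and property (3) of Subsection~\ref{sec:tensop0}, is essentially that same route. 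What you add is a direct verification: the forward double-sum estimate produces the exact bounds $A_WA_V$, $B_WB_V$ by two applications of the scalar fusion inequalities to the columns $O\pi_{V_i}e_k$ and to $O^{*}e_k$, and the converse is obtained by testing on rank-one operators $f\otimes g$, where the fusion sum factorizes into the product of the two scalar fusion sums. This buys an elementary, citation-free proof of the frame/Bessel equivalence with explicit constants, at the price of being longer; the paper's operator-theoretic route is shorter and handles invertibility questions uniformly. Two small points to tighten: in the Riesz-sequence case your ``restrict to $\overline{\textrm{span}}\{W_j\otimes V_i\}$'' step needs the (easy, but worth stating) identification of that closed span with $\overline{\textrm{span}}\{W_j\}\otimes\overline{\textrm{span}}\{V_i\}$, so that property (3) can be applied to the co-restricted synthesis operators; and in the Bessel-only converse you implicitly need some nonzero $g$ with $\sum_i v_i^{2}\|\pi_{V_i}g\|^{2}>0$ (i.e.\ not all $V_i=\{0\}$), since without a lower bound positivity of that factor is not automatic --- a degenerate case, but it should be mentioned or excluded.
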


 \begin{proof}
Let $T_{W\otimes V}^*:S_2(\Hil)\rightarrow\sum_{i,j}\bigoplus(W_j\otimes V_i)$ denote the analysis operator of $W\otimes V$. Then
\begin{eqnarray*}
T_{W\otimes V}^*O&=&\left\{w_jv_i\pi_{(W_j\otimes V_i)}O\right\}\\
&=&\left\{w_jv_i\pi_{W_j}O \pi_{V_i}\right\}\\
&=&\left(T_W^*\otimes T_V^*\right)(O),
\end{eqnarray*}
 for all $O\in S_2(\Hil)$. Combining \cite[Theorem 3.12]{caskut04}
and Corollary \ref{Mlr} the result follows immediately .

\end{proof}
%A natural question which arises is how to construct the operators of $W\otimes V$ via the operators of $W$ and $V$. We begin with the analysis operator. More precisely,
Suppose that $W$ and $V$ are Bessel fusion sequences in $\Hil$. Denote by $S_{W\otimes V}$ the fusion frame operator of $W\otimes V$, then
 \begin{eqnarray*}
S_{W\otimes V}&=&T_{W\otimes V}T_{W\otimes V}^*\\
&=&\left(T_{W}\otimes T_V\right)\left(T_W^*\otimes T_V^*\right)\\
&=&S_{W}\otimes S_V.
\end{eqnarray*}
As a consequence, we summarize the basic facts of frames of Hilbert-Schmidt operators as following.

\begin{corollary} Let $(W_i,w_i)$ and $(V_i,v_i)$ be  fusion frames. Then the following assertions for $W\otimes V$ hold:
\begin{enumerate}
\item $S_{W\otimes V}=S_{W}\otimes S_V$ is the frame operator of $W\otimes V$.
\item $S_{W\otimes V}^{-1}=S_W^{-1}\otimes S_V^{-1}$. In particular, the canonical dual of $W\otimes V$ is the tensor product of their canonical
duals.
\item The tensor product of alternate duals of $W$ and $V$ is an alternate dual of $W\otimes V$.
\end{enumerate}
\end{corollary}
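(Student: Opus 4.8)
The plan is to treat the three parts in order, using throughout the tensor-product calculus of Subsection~\ref{sec:tensop0} together with the two identities from the preceding Lemma, namely $\pi_{W_j\otimes V_i}=\pi_{W_j}\otimes\pi_{V_i}$ and $W_j\otimes V_i=\{\pi_{W_j}O\pi_{V_i}:O\in S_2(\Hil)\}$. Part~(1) needs essentially nothing new: the display immediately preceding the statement already establishes $S_{W\otimes V}=(T_W\otimes T_V)(T_W^*\otimes T_V^*)=S_W\otimes S_V$, and since $W,V$ are fusion frames the preceding Theorem guarantees that $W\otimes V$ is a fusion frame for $S_2(\Hil)$, so $S_{W\otimes V}$ is genuinely its frame operator. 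I would simply record this.

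For part~(2), I would first observe that $S_W$ and $S_V$ are invertible (being fusion-frame operators), so property~(3) of the tensor product yields $(S_W\otimes S_V)^{-1}=S_W^{-1}\otimes S_V^{-1}$; together with part~(1) this is exactly $S_{W\otimes V}^{-1}=S_W^{-1}\otimes S_V^{-1}$. To identify the canonical dual, recall that by definition it is $\left(S_{W\otimes V}^{-1}(W_j\otimes V_i),\,w_jv_i\right)$, so the task reduces to the subspace equality $(S_W^{-1}\otimes S_V^{-1})(W_j\otimes V_i)=S_W^{-1}W_j\otimes S_V^{-1}V_i$. Writing a generic element of $W_j\otimes V_i$ as $\pi_{W_j}O\pi_{V_i}$ and using $(S_W^{-1}\otimes S_V^{-1})(\pi_{W_j}O\pi_{V_i})=S_W^{-1}\pi_{W_j}O\pi_{V_i}S_V^{-1}$ (here $S_V^{-1}$ is self-adjoint), the range of the image lies in $S_W^{-1}W_j$ and the range of its adjoint in $S_V^{-1}V_i$, so by the Lemma the image lies in $S_W^{-1}W_j\otimes S_V^{-1}V_i$; the reverse inclusion follows since for $U$ in the target the operator $Q:=S_WUS_V$ lies in $W_j\otimes V_i$ and satisfies $(S_W^{-1}\otimes S_V^{-1})(Q)=U$. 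This gives $\widetilde{W\otimes V}=\widetilde{W}\otimes\widetilde{V}$.

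For part~(3), let $W'=(W'_j,w'_j)$ and $V'=(V'_i,v'_i)$ be G\v{a}vru\c{t}a duals of $W$ and $V$; by the preceding Theorem $W'\otimes V':=(W'_j\otimes V'_i,\,w'_jv'_i)$ is a Bessel fusion sequence, so it suffices to verify the reconstruction identity \eqref{sec:reconstr1} for the pair $(W\otimes V,\,W'\otimes V')$. For $O\in S_2(\Hil)$ the $(i,j)$ term is $(w_jv_i)(w'_jv'_i)\,\pi_{W'_j\otimes V'_i}S_{W\otimes V}^{-1}\pi_{W_j\otimes V_i}O$; substituting $S_{W\otimes V}^{-1}=S_W^{-1}\otimes S_V^{-1}$ and the projection identities and applying property~(5) twice, this term factors as $(A_j\otimes B_i)(O)=A_jOB_i^*$, where $A_j:=w_jw'_j\,\pi_{W'_j}S_W^{-1}\pi_{W_j}$ and $B_i:=v_iv'_i\,\pi_{V'_i}S_V^{-1}\pi_{V_i}$. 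Summing over $i,j$ and factoring the double sum gives $\left(\sum_jA_j\right)O\left(\sum_iB_i\right)^*$, and the scalar reconstruction formulas for the duals $W'$ and $V'$ read precisely $\sum_jA_j=\identity{\Hil}$ and $\sum_iB_i=\identity{\Hil}$; hence the sum collapses to $\identity{\Hil}\,O\,\identity{\Hil}=O$, so $W'\otimes V'$ is an alternate dual of $W\otimes V$.

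I expect the main obstacle to lie in part~(3): justifying that the strongly convergent double sum $\sum_{i,j}A_jOB_i^*$ may be factored into the product $\left(\sum_jA_j\right)O\left(\sum_iB_i\right)^*$ of the two single sums, and tracking adjoints carefully so that the self-adjointness of $S_W^{-1},S_V^{-1}$ and the relation $\left(\sum_iB_i\right)^*=\sum_iB_i^*=\identity{\Hil}$ are invoked correctly. The subspace identification in part~(2) is the only other step that is not purely formal.
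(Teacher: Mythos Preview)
Your proposal is correct and follows exactly the route the paper intends. The paper in fact gives no proof for this corollary beyond the display $S_{W\otimes V}=T_{W\otimes V}T_{W\otimes V}^*=(T_W\otimes T_V)(T_W^*\otimes T_V^*)=S_W\otimes S_V$ immediately preceding the statement, leaving parts~(2) and~(3) as consequences of the tensor-product calculus of Subsection~\ref{sec:tensop0} and the preceding Lemma; your argument simply fills in those details, and your caution about factoring the double sum in part~(3) is well placed but resolvable, since taking adjoints in the duality relation \eqref{sec:duality} gives $\sum_i B_i^*=\identity{\Hil}$ strongly as well.
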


\section{Solving Operator Equations}
 Recall that for discrete frames $\Phi$ and $\Psi$, we denote $T_{\Psi}^*OT_{\Phi}$ by $\mathcal{M}^{(\Psi,\Phi)} (O)$ as the Gram-matrix of an operator $O\in B(\Hil)$, also $O^{(\Psi, \Phi)}(M)=T_{\Psi}MT_{\Phi}^*$ indicates the operator induced by a matrix $M\in B(\ell^2)$. By a straightforward  calculation we can prove the following lemma (see \cite{kohl19}):
\begin{lemma}\label{dir}
 Let $\{W_i\}_{i\in I}$ be a family of closed subspaces in $\Hil$ and $U_i\in B(W_i)$, for all $i\in I$ such that $sup_{i\in I}\|U_i\|<\infty$. Then $\oplus U_i:\ltiw\rightarrow \ltiw$ defined by $\oplus U_i\{f_i\}_{i\in I}=\{U_if_i\}_{i\in I}$ is a well-defined bounded operator.
 \end{lemma}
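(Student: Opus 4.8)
The plan is to verify directly the three things the statement asks for, with the single constant $C := \sup_{i\in I}\norm{}{U_i} < \infty$ doing all the work: that $\oplus U_i$ sends $\ltiw$ into itself, that it is linear, and that it is bounded with $\norm{}{\oplus U_i}\le C$.

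First I would check well-definedness. Take $\{f_i\}_{i\in I}\in\ltiw$, so that $f_i\in W_i$ for each $i$ and $\sum_{i\in I}\norm{}{f_i}^2<\infty$. Since $U_i\in B(W_i)$ maps $W_i$ into $W_i$, the candidate image $\{U_if_i\}_{i\in I}$ has its $i$-th entry in $W_i$, as required for membership in $\ltiw$. The only point that genuinely uses the hypothesis is square-summability:
$$ \sum_{i\in I}\norm{}{U_if_i}^2 \;\le\; \sum_{i\in I}\norm{}{U_i}^2\,\norm{}{f_i}^2 \;\le\; C^2\sum_{i\in I}\norm{}{f_i}^2 \;<\;\infty, $$
so $\{U_if_i\}_{i\in I}\in\ltiw$. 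Linearity of $\oplus U_i$ is then immediate from the linearity of each $U_i$ together with the componentwise definition of the vector space operations on $\ltiw$, and the displayed estimate also gives boundedness, namely $\norm{}{\oplus U_i\{f_i\}_{i\in I}}^2=\sum_{i\in I}\norm{}{U_if_i}^2\le C^2\norm{}{\{f_i\}_{i\in I}}^2$, whence $\norm{}{\oplus U_i}\le C$.

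There is no real obstacle here; the one thing to be careful about is that the image sequence lands in the Hilbert space $\ltiw$ rather than merely in the algebraic product $\prod_{i\in I}W_i$, and this is exactly what the uniform bound $\sup_{i\in I}\norm{}{U_i}<\infty$ secures. (One could add, by testing on sequences supported on a single index $i$, that in fact $\norm{}{\oplus U_i}=\sup_{i\in I}\norm{}{U_i}$, but the one-sided estimate is all that is needed in the sequel.)
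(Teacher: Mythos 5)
Your proof is correct and is precisely the ``straightforward calculation'' the paper alludes to (the paper itself omits the argument, deferring to the cited thesis): the uniform bound $C=\sup_{i\in I}\|U_i\|$ gives square-summability of $\{U_if_i\}$, hence well-definedness and $\|\oplus U_i\|\le C$. Nothing is missing.
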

The following result can also be shown in a straightforward way (see also \cite{kohl19}):
 \begin{lemma}\label{sisi}
 Let $W=(W_i,w_i)$ be a fusion frame in $\Hil$ and $\Psi^{(i)}=\{\psi_{i,j}: j \in J_i\}$ be a Riesz basis (resp. frame) for $W_i$, for all $i\in I$ with bounds $A_i$ and $B_i$, respectively such that
 \begin{eqnarray}\label{sha1}
 0<inf_{i\in I}A_i\leq sup_{i\in I}B_i<\infty.
 \end{eqnarray}
Then $\Psi:=\{w_i\psi_{ij}\}_{i\in I,j\in J_i}$ is a Riesz basis (resp. frame). Moreover,
 \begin{enumerate}
 \item[(1)] $T_W \circ \left( \oplus T_{\Psi^{(i)}} \right) = T_\Psi$.
 \item[(2)] $S_\Psi = T_W \circ \left( \oplus S_{\Psi^{(i)}} \right) \circ T_W^*$.
 \end{enumerate}
 \end{lemma}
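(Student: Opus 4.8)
The plan is to realize $T_\Psi$ as the composition of the fusion synthesis operator $T_W$ with a fiberwise synthesis operator, and then read off all three assertions from the corresponding factorizations, using Lemma~\ref{dir} to control the direct-sum operators. First I would fix the canonical isometric identification $\ell^{2}\big(\bigcup_{i\in I} J_i\big)\cong\sum_{i\in I}\bigoplus\ell^{2}(J_i)$, under which a coefficient sequence $\{c_{ij}\}$ splits into its fibers $\{c_{ij}\}_{j\in J_i}\in\ell^{2}(J_i)$. Each local synthesis operator $T_{\Psi^{(i)}}\colon\ell^{2}(J_i)\to W_i$ satisfies $\|T_{\Psi^{(i)}}\|\le\sqrt{B_i}$, so $\sup_{i}B_i<\infty$ together with Lemma~\ref{dir} shows that $\oplus T_{\Psi^{(i)}}\colon\sum_{i}\bigoplus\ell^{2}(J_i)\to\sum_{i}\bigoplus W_i=\ltiw$, $\{c_{ij}\}\mapsto\{\sum_{j}c_{ij}\psi_{ij}\}_{i}$, is a well-defined bounded operator. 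A direct computation gives $T_W\circ(\oplus T_{\Psi^{(i)}})\{c_{ij}\}=\sum_{i}w_i\sum_{j}c_{ij}\psi_{ij}=\sum_{i,j}c_{ij}\,w_i\psi_{ij}=T_\Psi\{c_{ij}\}$, which proves~(1) and, along the way, that $T_\Psi$ is well defined and bounded with $\|T_\Psi\|\le\sqrt{B_W\sup_{i}B_i}$. Passing to adjoints in the direct-sum picture gives $(\oplus T_{\Psi^{(i)}})^{*}=\oplus T_{\Psi^{(i)}}^{*}$, hence $S_\Psi=T_\Psi T_\Psi^{*}=T_W(\oplus T_{\Psi^{(i)}})(\oplus T_{\Psi^{(i)}}^{*})T_W^{*}=T_W(\oplus S_{\Psi^{(i)}})T_W^{*}$, which is~(2).

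It then remains to see that $\Psi$ is a frame (resp.\ Riesz basis), and here I would invoke the standard characterizations: a sequence is a frame iff its synthesis operator is bounded and surjective, and a Riesz basis iff its synthesis operator is bounded and bijective. Boundedness is already established. In the frame case, $T_W$ is surjective because $W$ is a fusion frame, and $\oplus T_{\Psi^{(i)}}$ is surjective because each $T_{\Psi^{(i)}}$ maps onto $W_i$ while, by $\inf_{i}A_i>0$, the fiberwise Moore--Penrose inverses obey $\sup_{i}\|T_{\Psi^{(i)}}^{\dagger}\|\le 1/\sqrt{\inf_{i}A_i}<\infty$, so Lemma~\ref{dir} glues them into a bounded right inverse of $\oplus T_{\Psi^{(i)}}$; thus $T_\Psi=T_W\circ(\oplus T_{\Psi^{(i)}})$ is a composition of surjections and $\Psi$ is a frame. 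In the Riesz-basis case $W$ must be a fusion Riesz basis (a Riesz decomposition), so $T_W$ is bijective by Proposition~\ref{equi-Riesz}; each $T_{\Psi^{(i)}}$ is then bijective, the two uniform bounds make $\oplus T_{\Psi^{(i)}}$ bijective with bounded inverse (again via Lemma~\ref{dir}), and $T_\Psi$ is a composition of bijections, so $\Psi$ is a Riesz basis.

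The routine parts are the identification of the $\ell^{2}$-spaces and the explicit summation establishing~(1)--(2). The step I expect to require the most care is the surjectivity (resp.\ bijectivity) of $\oplus T_{\Psi^{(i)}}$: one has to check that the uniform bounds~\eqref{sha1} are exactly what permits the fiberwise (pseudo-)inverses to be assembled into one bounded operator through Lemma~\ref{dir}. I would also point out that, as stated, the Riesz-basis alternative forces $W$ to be a fusion Riesz basis rather than merely a fusion frame, consistent with Proposition~\ref{equi-Riesz}(5).
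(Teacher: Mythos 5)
Your proof is correct and, for items (1) and (2), coincides with what the paper does: both factor $T_\Psi$ as $T_W\circ(\oplus T_{\Psi^{(i)}})$, use \eqref{sha1} together with Lemma \ref{dir} to make the direct-sum operator well defined and bounded, and obtain $S_\Psi=T_W(\oplus S_{\Psi^{(i)}})T_W^*$ by passing to adjoints. Where you genuinely differ is the first assertion: the paper simply cites \cite[Theorem 3.2]{caskut04} and \cite[Proposition 2.4]{sharxxl17} for the fact that $\Psi$ is a frame (resp.\ Riesz basis), whereas you rederive it from the factorization itself, gluing the uniformly bounded local pseudo-inverses (norm at most $1/\sqrt{\inf_i A_i}$) through Lemma \ref{dir} into a bounded right inverse (resp.\ inverse) of $\oplus T_{\Psi^{(i)}}$ and invoking the characterization of frames (Riesz bases) by bounded surjective (bijective) synthesis operators. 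Your route is self-contained and makes transparent exactly where the uniformity condition \eqref{sha1} is used; the paper's route is shorter but leans entirely on the external results. Your closing caveat is also well taken: as literally stated, the Riesz-basis alternative is too strong --- with $W$ merely a fusion frame and every $\Psi^{(i)}$ an orthonormal basis one easily produces a redundant $\Psi$ (take overlapping subspaces of $\mathbb{R}^{2}$), so, consistent with Proposition \ref{equi-Riesz}, that alternative requires $W$ to be a fusion Riesz basis, a hypothesis the paper's citation implicitly supplies. One further minor point in your favor: your formula $T_\Psi\{c_{ij}\}=\sum_{i,j}c_{ij}\,w_i\psi_{ij}$, with the weight included, is the one needed for identity (1); the paper's displayed formula for $T_\Psi$ omits $w_i$, evidently a typo.
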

 \begin{proof}
 Using \cite[Theorem 3.2]{caskut04}, \cite[Proposition 2.4]{sharxxl17}  and the assumptions we have $\Psi$ is a Riesz basis (resp. frame) for $\Hil$. The synthesis operator $T_{\Psi}:\sum\bigoplus \ell^2\rightarrow \Hil$ is defined by $T_{\Psi}\{c_{ij}\}_{i\in I,j\in J_i}=\sum_{i\in I}\sum_{j\in J_i}c_{ij}\psi_{ij}$, where we identify $\ell^2\otimes \ell^2$ by $\sum\bigoplus \ell^2$. Also, (\ref{sha1}) follows that $sup_{i\in I}\|T_{\Psi^{(i)}}\|<\infty.$ Hence, $\oplus T_{\Psi^{(i)}}:\sum\bigoplus \ell^2\rightarrow \ltiw$ is a well-defined and bounded operator by Lemma \ref{dir}.
 \end{proof}
A fusion frame $W=(W_i,w_i)$ with  the local frames $\{\Psi^{(i)}\}_{i\in I}$ satisfied in the Theorem \ref{sisi} is called a fusion frame system and  denoted by $W=(W_i,w_i,\psi_{ij})$. If $\left(W,w_i,\psi_{ij}\right)$ and $\left(V,v_i,\phi_{ij}\right)$ are fusion frame systems, then $\left(W\otimes V,v_iw_i,\psi_{ij}\otimes\phi_{ij}\right)$ is also a fusion frame system for $S_2(\Hil)$.

In combination we get:
\begin{corollary}
Let $W=(W_i,w_i,\Psi_{ij})$ and $V=(V_i,v_i,\Phi_{ij})$ be fusion frame systems (with the same index sets $K=\{(i,j): i \in I, j \in J_i\}$). Then for all $O\in \BL{\Hil}$ and $M\in  \BL{\ell^2}
$ we have
\begin{enumerate}
\item[(1)] $[\mathcal{M}^{(\Psi,\Phi)} (O)]_{(i,j)(k,l)}=[\oplus\mathcal{M}^{(\Psi^{(i)},\Phi^{(k)})} (\mathcal{M}^{(W,V)} (O))]_{(i,j)(k,l)}.$
\item[(2)] $\mathcal O^{(\Psi, \Phi)}(M) = \mathcal O^{(W,V)}[\oplus\mathcal O^{(\Psi^{(i)},\Phi^{(k)})} (M)]_{(i,k)}.$
\end{enumerate}
\end{corollary}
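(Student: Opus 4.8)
The plan is to reduce both identities to the two factorisations of the synthesis operators contained in Lemma~\ref{sisi}(1) (applied once to the system $(W_i,w_i,\psi_{ij})$ and once to $(V_i,v_i,\phi_{ij})$), namely
\begin{equation*}
T_{\Psi}=T_{W}\circ\bigl(\oplus T_{\Psi^{(i)}}\bigr),\qquad T_{\Phi}=T_{V}\circ\bigl(\oplus T_{\Phi^{(i)}}\bigr),
\end{equation*}
where $\oplus T_{\Psi^{(i)}}$ and $\oplus T_{\Phi^{(i)}}$ are bounded (their boundedness is guaranteed by the uniform bound \eqref{sha1} built into the definition of a fusion frame system, and is already part of Lemma~\ref{sisi}). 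Throughout I would fix, once and for all, the identification $\ell^{2}\cong\sum_{i\in I}\bigoplus\ell^{2}(J_{i})$ used in the proof of Lemma~\ref{sisi}; under it an operator $M\in\BL{\ell^{2}}$ becomes a block matrix of operators $(M_{ik})$ with $M_{ik}\in\BL{\ell^{2}(J_{k}),\ell^{2}(J_{i})}$, and $\oplus\mathcal{M}^{(\Psi^{(i)},\Phi^{(k)})}(\cdot)$ and $[\oplus\mathcal{O}^{(\Psi^{(i)},\Phi^{(k)})}(\cdot)]_{(i,k)}$ denote the block matrices of operators whose $(i,k)$-block is the indicated operator.

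For part (1) I would simply expand $\mathcal{M}^{(\Psi,\Phi)}(O)=T_{\Psi}^{*}OT_{\Phi}$. Taking adjoints in the first factorisation (the adjoint of a block-diagonal operator is the block-diagonal of the adjoint blocks) gives $T_{\Psi}^{*}=\bigl(\oplus T_{\Psi^{(i)}}^{*}\bigr)\circ T_{W}^{*}$, and together with $\mathcal{M}^{(W,V)}(O)=T_{W}^{*}OT_{V}$ this yields
\begin{equation*}
\mathcal{M}^{(\Psi,\Phi)}(O)=\bigl(\oplus T_{\Psi^{(i)}}^{*}\bigr)\,\bigl(T_{W}^{*}OT_{V}\bigr)\,\bigl(\oplus T_{\Phi^{(i)}}\bigr)=\bigl(\oplus T_{\Psi^{(i)}}^{*}\bigr)\,\mathcal{M}^{(W,V)}(O)\,\bigl(\oplus T_{\Phi^{(i)}}\bigr).
\end{equation*}
Since composing a block-diagonal operator, a block matrix of operators, and a block-diagonal operator produces a block matrix whose $(i,k)$-block is the product of the corresponding blocks, the right-hand side is the block matrix whose $(i,k)$-block equals $T_{\Psi^{(i)}}^{*}\,[\mathcal{M}^{(W,V)}(O)]_{ik}\,T_{\Phi^{(k)}}=\mathcal{M}^{(\Psi^{(i)},\Phi^{(k)})}\bigl([\mathcal{M}^{(W,V)}(O)]_{ik}\bigr)$, i.e.\ it is exactly $\oplus\mathcal{M}^{(\Psi^{(i)},\Phi^{(k)})}\bigl(\mathcal{M}^{(W,V)}(O)\bigr)$; reading off the $((i,j),(k,l))$-entry yields (1). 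As a sanity check one can also verify (1) directly: both sides have $((i,j),(k,l))$-entry $v_{k}w_{i}\langle O\phi_{kl},\psi_{ij}\rangle$, the local projections $\pi_{V_k}$ and $\pi_{W_i}$ disappearing because $\phi_{kl}\in V_k$ and $\psi_{ij}\in W_i$.

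Part (2) is the dual statement, proved the same way. From $\mathcal{O}^{(\Psi,\Phi)}(M)=T_{\Psi}MT_{\Phi}^{*}$, the factorisations, $T_{\Phi}^{*}=\bigl(\oplus T_{\Phi^{(i)}}^{*}\bigr)\circ T_{V}^{*}$, and $\mathcal{O}^{(W,V)}(\cdot)=T_{W}(\cdot)T_{V}^{*}$, one obtains
\begin{equation*}
\mathcal{O}^{(\Psi,\Phi)}(M)=T_{W}\Bigl(\bigl(\oplus T_{\Psi^{(i)}}\bigr)\,M\,\bigl(\oplus T_{\Phi^{(i)}}^{*}\bigr)\Bigr)T_{V}^{*}=\mathcal{O}^{(W,V)}\Bigl(\bigl(\oplus T_{\Psi^{(i)}}\bigr)\,M\,\bigl(\oplus T_{\Phi^{(i)}}^{*}\bigr)\Bigr),
\end{equation*}
and the inner block matrix has $(i,k)$-block $T_{\Psi^{(i)}}M_{ik}T_{\Phi^{(k)}}^{*}=\mathcal{O}^{(\Psi^{(i)},\Phi^{(k)})}(M_{ik})$, i.e.\ it equals $[\oplus\mathcal{O}^{(\Psi^{(i)},\Phi^{(k)})}(M)]_{(i,k)}$. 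This is (2).

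I do not expect a genuine obstacle; the work is organising the bookkeeping of block operators. The only place that needs care is making the block-operator formalism precise: fixing the identification $\ell^{2}\cong\sum_{i}\bigoplus\ell^{2}(J_{i})$, checking that block multiplication agrees with operator composition in the form $(\oplus A_{i})(B_{ik})(\oplus C_{k})=(A_{i}B_{ik}C_{k})_{ik}$, and invoking \eqref{sha1} (through Lemma~\ref{sisi}, hence Lemma~\ref{dir}) for the boundedness of $\oplus T_{\Psi^{(i)}}$, $\oplus T_{\Phi^{(i)}}$ and their adjoints. Once these are in place, (1) and (2) are immediate substitutions.
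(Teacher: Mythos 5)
Your proof is correct and follows exactly the route the paper intends: the corollary is stated as an immediate consequence ("in combination") of Lemma \ref{dir} and Lemma \ref{sisi}, and your argument simply makes explicit the substitution of the factorisations $T_{\Psi}=T_{W}\circ(\oplus T_{\Psi^{(i)}})$, $T_{\Phi}=T_{V}\circ(\oplus T_{\Phi^{(i)}})$ (and their adjoints) into $T_{\Psi}^{*}OT_{\Phi}$ and $T_{\Psi}MT_{\Phi}^{*}$, together with the block-operator bookkeeping. No discrepancy with the paper's (unwritten but clearly intended) proof.
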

As a result of the above theorem we have
\begin{eqnarray*}
[\mathcal{M}^{(\Psi,\Phi)} (S_W^{-1/2}\otimes S_V^{-1/2})(O)]_{(i,j)(k,l)}=[\oplus\mathcal{M}^{(\Psi^{(i)},{\Phi^{(k)}})} (\mathcal{M}_{\otimes}^{(W,V)} (O))]_{(i,j)(k,l)}.
\end{eqnarray*}
Also,
\begin{eqnarray*}
(S_W^{-1/2}\otimes S_V^{-1/2})\mathcal{O}^{(\Psi,\Phi)}(M)=\mathcal{O}_{\otimes}^{(W,V)}[\oplus \mathcal{O}^{(\Psi^{(i)},\Phi^{(k)})}(M)]_{(i,k)}.
\end{eqnarray*}

In particular, if $S_{\Psi}=S_W$ and $S_{\Phi}=S_V$, for example every local frame $\Psi^{(i)}$ and $\Phi^{(i)}$ are Parseval (see Lemma \ref{sisi}), then
$$[\mathcal{M}_{\otimes}^{(\Psi,\Phi)}  (O)]_{(i,j)(k,l)}=[\oplus\mathcal{M}^{(\Psi^{(i)},{\Phi^{(k)}})} (\mathcal{M}_{\otimes}^{(W,V)} (O))]_{(i,j)(k,l)}. $$
Consider the operator equation
\begin{eqnarray}\label{o}
Of & = & g,
\end{eqnarray}
where $O\in \BL{\Hil}$. For the operator equation (\ref{o}) we can introduce the following system of linear equations
\begin{eqnarray}\mathcal{M}^{(W,W)}(O ) T_W^* f & = &  T_W^* g \label{LinEq1} \\
\mathcal{M}^{(W,W)}(O S_W^{-1}) T_W^*S_W^{-1} f & = &  T_W^* S_W^{-1}g \label{LinEq1'} \\
\mathcal{M}^{(\Psi,\Psi)}(O S_{\Psi}^{-1}) T_{\Psi}^* f & = &  T_{\Psi}^* g \label{LinEq2} \\
\mathcal{M}_{\otimes}^{(W,W)}(O) T_W^* f & = &  T_W^* g \label{LinEq3} \\
\mathcal{M}_{\otimes}^{(\widetilde{W},W)}(O) T_W^* f & = &  T_W^* g \label{LinEq4}.
\end{eqnarray}
Note that, for numerical computations, the above linear systems are solved. It is very natural to ask which representation above is numerically more efficient. Between (\ref{LinEq1}) and (\ref{LinEq1'}), the second is more suitable since $S_W^{-1}$ in the all process of (\ref{LinEq1}) is appear, however, in (\ref{LinEq1'}) it is only  in the final step. Obviously, (\ref{LinEq3}) coincide (\ref{LinEq1}) when the subspaces $W_i,$ for all $i\in I$ are one-dimensional. In general, every equation of (\ref{LinEq1}) is a linear combination of equations of (\ref{LinEq2}).
 Therefore, applying (\ref{LinEq1}) reduces the computation time and iterations. An important distinction is between nonlinear equation (\ref{o}) and linear systems (\ref{LinEq2}),..., (\ref{LinEq4}).
It is not difficult to see that the solutions of (\ref{o}) coincide with the solutions of (\ref{LinEq1}) and (\ref{LinEq2}). Also, there exists a one to one correspondence between solutions of (\ref{o}) and solutions of (\ref{LinEq3}) and (\ref{LinEq4}).

\subsection{Block matrices by fusion frames} \label{sec:blockfus}

By  \eqref{tensor} we have a block-matrix representation. Assuming finite frames \cite{xxlfinfram1,caku13} (just for this representation)  we can write for $U_{i,j} = \left(\mathcal{M}^{(W, W)}(O)\right)_{ i,j}$:
\begin{equation} \label{sec:blockstruct1} O =  T_{\w} \phi_{\w W} \left( \begin{array}{c c c c} U_{0,0} & U_{0,1} & \cdots & U_{0,b-1} \\ U_{1,0} & U_{1,1} & \cdots & U_{1,b-1} \\ \vdots & \vdots & \ddots & \vdots\\ U_{b-1,0} & U_{b-1,1} & \cdots & U_{b-1,b-1} \end{array} \right) \phi_{\w W}T^*_{\w},
\end{equation}

%{\xxlnew {\bf For later:} Using the results for the resolution of identity we could use that for a connection of the matrix representation with fusion frames and the related frame systems, i.e. relate the results above to $T_i = O^{(i)} M^{(i)} \left( O \right)$. }

%Using the sub-frames $\Psi_i = \psi_{ij}$ of the frame $\Psi$ (the same for $\Phi$ and $\Phi_i$) and doing a matrix representation of the $O_{ij}$ we get a full representation of the operator $O$ by a block matrix.

%{\bf Show:} Let $W_i, w_i, \psi_{i,j}$ be a fusion frame system. Denote $\Psi^i = \left( \psi_{i,j} \right)_{j \in I_j}$. Then
%$$\MM^{W,V} \left( \OO^{\Psi, \Phi}(O) \right) = \MM^{\Psi,\Phi} \left( O \right).$$
%.... or something similar .....

\section{Applications}

In this section, we show that existing algorithms and operator representations can
be interpreted as fusion frames representations. In particular, we focus on
convolution operators, where it is shown that the degrees of freedom
obtained by using redundant fusion frames can be used to obtain sparse and structured
representations of convolutions.

\subsection{Overlapped convolution algorithms}

We consider convolution operators in $L^2(\mathbb R)$. For a function $h \in L^1(\mathbb R)$, the convolution
operator $O$ is defined by
\begin{eqnarray*}
   O(f)(t) &= & (h \star f)(t)\\
   & = & \int_{\mathbb{R}} f(t-u) h(u) du.
\end{eqnarray*}
Boundedness of $O$ is guaranteed by the Young inequality, i.e. $\|O\|_{L^2(\mathbb R) \rightarrow L^2(\mathbb R)} \leq \|h\|_{L^1(\mathbb R)}$.

Fusion frames can be used to find efficient representations of a convolution
operator as a matrix of convolution operators on bounded intervals. Numerically, such convolutions
can be efficiently computed using Fast Fourier Transforms.

At first fusion frame representation of $O$ is obtained by considering the
fusion orthogonal basis $(W_i)_{i\in\mathbb Z}$ where $W_i = L^2(i, i+1)$, with orthogonal projections
$\pi_{W_i}(f) = f\mathbf{1}_{(i, i+1)}$.

In this orthogonal fusion basis, the convolution $O$ is represented by the matrix of operators
$\mathcal{M}^{(W,W)}(H)$ with  (see \eqref{tensor})
\begin{eqnarray*}
\left(\mathcal{M}^{(W,W)}(O)\right)_{ j,i} & = &  \pi_{W_i} H \pi_{W_j} \\
& = &  O_{ij}
\end{eqnarray*}
where
\begin{eqnarray*}
 O_{ j,i}(f) & = & \mathbf{1}_{[i, i+1]} \left(h \star (\mathbf{1}_{[j, j+1]} f) \right).
\end{eqnarray*}
In the definition of $ O_{j,i}$, $h$ can be replaced by $h_{i-j}$ with $h_k =  \mathbf{1}_{[k-1, k+1]} h$. Furthermore, $ O_{ij}(f)$ can be obtained
by computing a circular convolution: without loss of generality, for $j=1$ and $i=0$,
 $ O_{10}(f)$ is equal, in the interval $(1,2)$, to  the circular convolution
 of $h_1$ and $\mathbf{1}_{[0, 1]} f$, computed in the interval $(0,2)$. As the Fourier coefficients of the circular convolution are given by the products of the Fourier coefficients of $h_1$ and $\mathbf{1}_{[0, 1]} f$ in the interval $(0,2)$, $H_{10}$ is a frame multiplier \cite{xxlmult1} $M_{m, \Phi, \Psi}$, where the symbol $m$ is given by the Fourier coefficients of $h_1$, $\Phi = (\phi_n)_{n\in\mathbb Z}$ and $\Psi = (\psi_n)_{n\in\mathbb Z}$ are  frames
 on $L^2(0,1)$ and $L^2(1,2)$ resp., with $\phi_n(t) = \psi_n(t) = \exp(i\pi nt) / \sqrt{2}$.

We consider now the case where the support of $h$ is the interval $[0,L]$, with finite $L$.
We show that the Overlap-Add and Overlap-Save algorithms \cite{stockham, oppenheim}
for fast numerical convolutions can be interpreted as fusion frame matrix representations of the convolution operator $ O$. These algorithms compute convolutions of a long signal with a short impulse response by splitting the signal in short segments, and computing convolutions of signals with finite support using the Fast Fourier Transform.
Here, we take advantage of the non-uniqueness of the fusion frame representation of an operator to get a simple representation of a convolution operator.

Overlap-add consists in decomposing $f$ using the frame $W$ as $\sum_{i\in\mathbb Z} f\mathbf{1}_{(i, i+1)}$, and applying $H$ on each term:
\begin{equation*}
h \star f = \sum_{i\in\mathbb Z} h \star (f\mathbf{1}_{(i, i+1)})
\end{equation*}
As the support of $h \star f\mathbf{1}_{(i, i+1)}$ is included in $[i, i+L+1]$,
each term of the sum is in a subspace of the fusion frame $V$, with $V_i = L^2(i, i+L+1)$, and
the convolution $H$ can be represented as
\begin{equation*}
     O = \mathcal{O}^{(V,W)}( \mathcal{M}^{oa})
\end{equation*}
with $ \mathcal{M}^{oa}$ the matrix of operators such that $ \mathcal{M}^{oa}_{ij} = 0$ for $i\neq j$, and
$ \mathcal{M}^{oa}_{ii} : W_i \rightarrow V_i$ is defined by
$ \mathcal{M}^{oa}_{ii}(f) = h\star f$.

Similarly, the Overlap-Save algorithm is obtained by using the fusion frames $V$ and $U$, with $U_i=L^2(i+L, i+L+1)$. Then
$H = \mathcal{O}^{(U,V)}(H^{os})$
with $H^{os}_{ij} = 0$ for $i\neq j$, and
\begin{equation*}
    \mathcal{M}^{os}_{ii}(f) =
   \mathbf{1}_{[i+L, i+L+1]}(h\star f)
\end{equation*}

In both cases, the convolution operator is decomposed as a diagonal matrix of operators, where the
operators on the diagonal are convolutions on bounded intervals, that can be computed using Fourier series.

\subsection{Non-standard representation of operators in wavelet frames}

In this second example, we give a fusion frame interpretation of the non-standard representation of operators in wavelet bases \cite{bey92}.

We consider a multiscale analysis, i.e., a
dense, nested sequence of finite dimensional subspaces
\[
 \dots \subseteq V_{2} \subset V_{1}  \subseteq V_0  \subseteq V_{-1}  \subseteq V_{-2}   \subseteq \dots  \subseteq V_j
  	 \subseteq\dots \subseteq L^2(\RR),
\]
such that
\[
L^2(\RR) = \overline{\bigcup_{j\in\mathbb{N}_0}V_j},
  	\qquad V_0 = \bigcap_{j\in\mathbb{N}_0}V_j.
\]

We define $W_j$ as the orthogonal complement of $V_j$ in $V_{j-1}$,
$$ V_{j-1} = V_j \oplus W_j.$$
Then $L^2(\RR)  = \bigoplus W_j$.

As in \cite{bey92} let us consider $\{ \psi_{j,k} \}$ an orthonormal basis for $W_j$ and $\{ \varphi_{j,k} \}$ and ONB for $V_j$. $P_j$ and $Q_j$ are the orthogonal projections in $V_j$ and $W_j$, respectively.

The space $V_0$ of the approximations of functions of $L^2(\mathbb R)$ at scale 0 can be decomposed as
a space of coarse approximations $V_n$, and detail subspaces $W_j$:
\begin{eqnarray*}
V_0 = V_n + \sum_{j=1}^n W_j.
\end{eqnarray*}

An approximation $T_0$ of an operator at scale 0 is given by $T_0 = P_0TP_0$.
Using this fusion orthogonal basis $(V_n, W_n,  \ldots, W_2, W_1)$ to represent the operator $T_0$ would involve operators between spaces at different scales.

Following \cite{bey92}, we use the nonstandard form of an operator $T$, i.e. the set of triplets $\{A_j, B_j, \Gamma_j\}_{j\in\mathbb Z}$, where
\begin{eqnarray*}
A_j & = & Q_j T Q_j \\
B_j & = & Q_j T P_j \\
\Gamma_j & = & P_j T Q_j
\end{eqnarray*}

Using the nonstandard form of an operator allows to decompose the operator
using operators between spaces at the same scales :
\begin{equation*}
T_0 = T_n + \sum_{j=1}^n \left(A_j + B_j + \Gamma_j\right).
\end{equation*}

This can be interpreted as a fusion frame representation of $T_0$ by considering the
fusion frame $F_n = (V_n, V_{n-1}, \ldots, V_1, W_n, W_{n-1}, \ldots, W_1)$, and the matrix representation
\begin{equation*}
    T_0 = \mathcal{O}^({F_n, F_n})(M)
\end{equation*}
with
\begin{equation*}
M =
\left(
\begin{array}{cccc:cccc}

T_n & & & & \Gamma_{n}\\
 & & & &  & \Gamma_{n-1} \\
 & & & &  & & \ddots \\
 & & & &  & & & \Gamma_1 \\
\hdashline
 B_{n} & & & & A_{n} \\
 & B_{n-1} & & & & A_{n-1} \\
 & & \ddots & & & & \ddots \\
 & & & B_{1} & & & & A_{1} \\
\end{array}
\right)
\end{equation*}

In the case where $T$ is  an integral operator
of the form
\begin{equation*}
    T(f)(x) = \int_{\mathbb R} K(x,y) f(y) dy,
\end{equation*}
then the coefficients of the matrix representations
of $A_n$, $B_n$ and $\Gamma_n$ using the orthogonal bases of $W_n$ and $V_n$ are given the 2D wavelets coefficients of $K(x, y)$ \cite{beycoif91}.

Furthermore, if $T$ is translation-invariant (i.e. $K(x, y) = h(x-y))$, the representation of $A_j$ in the
wavelet basis of $W_j$ is given by the coefficients
\begin{eqnarray*}
 \alpha_{il}^j & = & \int_{-\infty}^{+\infty} \psi_{j,i}(x) (T\psi_{j,l})(x)dx  \\
& =& \int_{-\infty}^{+\infty} \psi_{j,i-l}(x) (T\psi_{j,0})(x)dx  \\
%& = & 2^{-j} \alpha_{i-l}^j
\end{eqnarray*}
which has a Toeplitz structure.
Similar structures are found for the matrix representations
of $B_j$ and $\Gamma_j$.

%----------------------------------------------------------------------------
%%%%%%%%%%%%%%%%%%%%%%%%%%%%%%%
\small
\bibliographystyle{abbrv}
%\bibliography{biblioall}

\end{document}